\newtheorem*{theorem*}{\hspace{-6.3mm}\textbf{Theorem}}  
\newtheorem{theoremcounter}{Theorem Counter}[section]
\theoremstyle{remark}
\theoremstyle{definition}
\newtheorem{definition}[theoremcounter]{Definition}
\newtheorem{example}{Example}
\theoremstyle{plain}
\newtheorem{lemma}[theoremcounter]{Lemma}
\newtheorem{proposition}[theoremcounter]{Proposition}
\newtheorem{corollary}[theoremcounter]{Corollary}
\newtheorem{theorem}[theoremcounter]{Theorem}
\numberwithin{equation}{section}
\newcommand{\Z}{\mathbb{Z}}
\newcommand{\Q}{\mathbb{Q}}
\newcommand{\R}{\mathbb{R}}
\newcommand{\C}{\mathbb{C}}
\newcommand{\dd}{\mathrm{d}}
\newcommand{\bbH}{\mathbb{H}}
\DeclareMathOperator{\ImNew}{Im}
\renewcommand{\Im}{\ImNew}
\DeclareMathOperator{\ReNew}{Re}
\renewcommand{\Re}{\ReNew}
\DeclareMathOperator{\divNew}{div}
\renewcommand{\div}{\divNew}
\DeclareMathOperator{\SL}{SL}
\DeclareMathOperator{\PSL}{PSL}
\DeclareMathOperator{\sgn}{sgn}
\newcommand{\pmat}[1]{\begin{pmatrix}#1\end{pmatrix}}
\newcommand{\smat}[1]{\bigl(\begin{smallmatrix}#1\end{smallmatrix}\bigr)}
\begin{document}

\title[]{The Fourier coefficients and singular moduli of the elliptic modular function $j(\tau)$, revisited}

\author[]{Toshiki Matsusaka}
\address{Faculty of Mathematics, Kyushu University, Motooka 744, Nishi-ku, Fukuoka 819-0395, Japan}
\email{matsusaka@math.kyushu-u.ac.jp}


\subjclass[2020]{11F30, 11F37, 11F50}



\maketitle

\begin{center}
Dedicated to Professor Masanobu Kaneko on his 60+4th birthday
\end{center}

\begin{abstract}
	Kaneko's formula expresses the Fourier coefficients of the elliptic modular $j$-function as finite sums of singular moduli. First published as a short article in 1996, it was presented as a consequence of Zagier's work inspired by Borcherds products. Since then, the formula has developed into a broader framework that links the Fourier coefficients of modular forms to the special values of modular functions, extending in various directions. This article surveys these subsequent developments.
\end{abstract}


\section{Introduction}\label{sec:Intro}

The elliptic modular $j$-function, which Kaneko~\cite{Kaneko2001} once described as \mbox{``\begin{CJK}{UTF8}{ipxm}愛惜措く能わざる\end{CJK}"} (that is, an object too profoundly cherished to part with), is defined as a modular function for $\SL_2(\Z)$ by the following expression:
\[
	j(\tau) \coloneqq \frac{E_4(\tau)^3}{\Delta(\tau)} = q^{-1} + 744 + 196884q + 21493760q^2 + \cdots,
\]
where $E_4(\tau)$ is the Eisenstein series of weight $4$, $\Delta(\tau)$ is the discriminant cusp form of weight $12$, and we set $q \coloneqq e^{2\pi i \tau}$. Among its many remarkable features, one of particular interest in this article is the special values of $j(\tau)$ at imaginary quadratic irrationalities in the upper-half plane $\bbH \coloneqq \{\tau \in \C : \Im(\tau) > 0\}$. The values, known as \emph{singular moduli}, exhibit striking arithmetic properties. Well-known examples include
\[
	j \left(\frac{1+\sqrt{-3}}{2}\right) = 0, \quad j(i) = 1728, \quad \text{and} \quad j \left(\frac{1+\sqrt{-15}}{2}\right) = \frac{-191025 - 85995\sqrt{5}}{2}.
\]
Kaneko's work~\cite{Kaneko1996} provided deep insights into the nature of these special values.

In brief, Kaneko's theorem states that the Fourier coefficients of the $j$-function can be expressed as finite sums of singular moduli. To present this result precisely, we first recall some basic notations related to binary quadratic forms. Let $-d < 0$ be a discriminant satisfying $-d \equiv 0, 1 \pmod{4}$. We denote by $\mathcal{Q}_d$ the set of integral positive definite binary quadratic forms $Q(x, y) = [a, b, c] = ax^2 + bxy + cy^2$ with discriminant $-d = b^2 - 4ac$. The group $\Gamma = \PSL_2(\Z)$ acts on $\mathcal{Q}_d$ via
\[
	\left(Q \circ \pmat{A & B \\ C & D} \right)(x, y) \coloneqq Q(Ax+By, Cx+Dy).
\]
It is a classical fact that this action has finitely many orbits. The number of orbits $|\mathcal{Q}_d/\Gamma|$ is called the \emph{class number} of discriminant $-d$. 
For each $Q \in \mathcal{Q}_d$, let $\Gamma_Q$ denote the stabilizer of $Q$ in $\Gamma$. Its order is $3, 2$, or $1$ according to whether $Q$ is $\Gamma$-equivalent to a form of the shape $a(x^2 + xy + y^2)$, $a(x^2 + y^2)$, or otherwise, respectively. We associate to each form $Q \in \mathcal{Q}_d$ the unique room $\alpha_Q \in \bbH$ of the equation $Q(\tau, 1) = 0$. With this setup, Zagier~\cite{Zagier2002} studied the following quantity, called the \emph{traces of singular moduli}. For $m \ge 1$, it is defined by
\begin{align}\label{eq:trace-singular}
	\mathbf{t}_m(d) \coloneqq \sum_{Q \in \mathcal{Q}_d/\Gamma} \frac{1}{|\Gamma_Q|} j_m(\alpha_Q),
\end{align}
where $j_m(\tau)$ is a polynomial in $j(\tau)$ whose Fourier expansion begins as $j_m(\tau) = q^{-m} + O(q)$. The first few examples of $j_m(\tau)$ are given by
\begin{align*}
	j_1(\tau) &= j(\tau) - 744 = q^{-1} + 196884q + \cdots,\\
	j_2(\tau) &= j(\tau)^2 - 1488j(\tau) + 159768 = q^{-2}+42987520 q + \cdots.
\end{align*}

Before stating the formula of interest, we first examine several concrete values of $\mathbf{t}_2(d)$ to see how it works. Since $\mathcal{Q}_3/\Gamma = \{[x^2 + xy + y^2]\}$ and $\mathcal{Q}_4/\Gamma = \{[x^2 + y^2]\}$, we have
\begin{align*}
	\mathbf{t}_2(3) &= \frac{1}{3}((j(e^{2\pi i/3})^2 -1488j(e^{2\pi i/3}) + 159768) = 53256,\\
	\mathbf{t}_2(4) &= \frac{1}{2}(j(i)^2 -1488j(i) + 159768) = 287244.
\end{align*}
Remarkably, these values satisfy the relation
\[
	\mathbf{t}_2(4) + 2 \mathbf{t}_2(3) + 12 = 2 \times 196884,
\]
which yields twice the first Fourier coefficient of $j(\tau)$. More generally, Kaneko established the following identity.

\begin{theorem}[Kaneko~\cite{Kaneko1996}]\label{thm:Kaneko-formula}
	For $m=2$, we define $\mathbf{t}_2(d)$ as above for negative discriminants $-d < 0$. In addition, we set $\mathbf{t}_2(0) = 6, \mathbf{t}_2(-1) = -1, \mathbf{t}_2(-4) = -2$, and define $\mathbf{t}_2(d) = 0$ for all other integers $d$. Then for any integer $n \ge 1$, the following identity holds:
	\[
		\sum_{r \in \Z} \mathbf{t}_2(4n-r^2) = 2n \mathrm{Coeff}_{q^n} (j),
	\]
	where $\mathrm{Coeff}_{q^n} (j)$ is the $n$-th Fourier coefficient of $j(\tau)$.
\end{theorem}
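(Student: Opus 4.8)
The plan is to recognize the left-hand side of the identity as the sequence of Fourier coefficients of a weakly holomorphic modular form of weight $2$ for $\SL_2(\Z)$, and then to pin that form down from its principal part by invoking $\dim M_2(\SL_2(\Z)) = 0$. To this end set
\[
	F(\tau) \coloneqq \sum_{n \in \Z}\Big(\,\sum_{r \in \Z}\mathbf{t}_2(4n - r^2)\Big)\,q^n ,
\]
a well-defined formal $q$-series: for each fixed $n$ the inner sum is finite, since $4n - r^2 \to -\infty$ as $|r|\to\infty$ while $\mathbf{t}_2(d) = 0$ for all sufficiently negative $d$. The theorem asserts that $F(\tau) = 2\cdot\tfrac{1}{2\pi i}\tfrac{\dd j}{\dd\tau} = 2\sum_{n}n\,\mathrm{Coeff}_{q^n}(j)\,q^n$.

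The essential step is to show that $F$ is modular of weight $2$ for $\SL_2(\Z)$. I would invoke Zagier's theorem on traces of singular moduli \cite{Zagier2002}: since $j_2 = q^{-2} + O(q)$ is a weakly holomorphic modular function of weight $0$ for $\SL_2(\Z)$ with vanishing constant term, the generating series $h(\tau) \coloneqq \sum_{d\in\Z}\mathbf{t}_2(d)\,q^d$ — where $\mathbf{t}_2(d)$ for $d > 0$ is defined by \eqref{eq:trace-singular} with $m = 2$, and the boundary values $\mathbf{t}_2(0) = 6$, $\mathbf{t}_2(-1) = -1$, $\mathbf{t}_2(-4) = -2$ (all other $\mathbf{t}_2(d)$ being $0$) supply exactly its constant term and principal part — is a weakly holomorphic modular form of weight $3/2$ in the Kohnen plus space for $\Gamma_0(4)$; note that $\mathbf{t}_2(d) = 0$ unless $d\equiv 0,3\pmod{4}$ (as $\mathcal{Q}_d = \emptyset$ otherwise), which is precisely the plus-space condition in this weight. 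Now it is classical — via the Shimura--Shintani correspondence (Niwa, Cipra), and most transparently through Eichler--Zagier's theory of Jacobi forms — that the map $g = \sum_D c(D)q^D \mapsto \sum_n\big(\sum_r c(4n - r^2)\big)q^n$ carries a weakly holomorphic form of weight $3/2$ in the plus space for $\Gamma_0(4)$ to a weakly holomorphic modular form of weight $2$ for $\SL_2(\Z)$: Jacobi-theoretically, $g$ corresponds to a weakly holomorphic Jacobi form $\phi(\tau,z) = \sum_{n,r}c(4n - r^2)\,q^n\zeta^r$ of weight $2$ and index $1$ (with $\zeta = e^{2\pi i z}$), and the image is its restriction $\phi(\tau,0)$. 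Applied to $g = h$, so that $c(D) = \mathbf{t}_2(D)$, this produces precisely $F \in M_2^!(\SL_2(\Z))$.

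Next I would compute the principal part of $F$ directly from the definition. For $n\le -2$ one has $4n - r^2 \le 4n < -4$ for every $r$, so every term vanishes; for $n = -1$ only $r = 0$ survives, giving $\mathbf{t}_2(-4) = -2$; and for $n = 0$ the sum equals $\mathbf{t}_2(0) + 2\mathbf{t}_2(-1) + 2\mathbf{t}_2(-4) = 6 - 2 - 4 = 0$. Hence $F(\tau) = -2q^{-1} + O(q)$ with vanishing constant term. On the other hand, because $j$ is a modular function of weight $0$ for $\SL_2(\Z)$ with only a simple pole at the cusp, $\tfrac{1}{2\pi i}\tfrac{\dd j}{\dd\tau} = q\tfrac{\dd j}{\dd q} = -q^{-1} + 196884\,q + \cdots$ is itself a weakly holomorphic modular form of weight $2$ for $\SL_2(\Z)$ (the anomalous term in the transformation law of the derivative is proportional to the weight, hence vanishes in weight $0$), so $2\cdot\tfrac{1}{2\pi i}\tfrac{\dd j}{\dd\tau}$ has the same principal part $-2q^{-1}$ and the same vanishing constant term as $F$. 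Their difference is therefore holomorphic — also at the cusp, since the principal parts and constant terms agree — of weight $2$ for $\SL_2(\Z)$, hence lies in $M_2(\SL_2(\Z)) = \{0\}$ and vanishes. Comparing coefficients of $q^n$ gives $\sum_{r\in\Z}\mathbf{t}_2(4n - r^2) = 2n\,\mathrm{Coeff}_{q^n}(j)$ for every $n$, as desired.

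The hard part is the modularity of $F$; everything after is bookkeeping. It rests on Zagier's weight-$3/2$ theorem for trace generating functions — proved via Borcherds-style regularized theta lifts, or alternatively via holomorphic projection, and used here in the weakly holomorphic regime with the accompanying regularization — together with the (essentially coefficient-level, though transformation-law-dependent) passage to Jacobi forms and the specialization $z = 0$. It bears emphasizing that this modularity is precisely what forces the otherwise ad hoc values $\mathbf{t}_2(0) = 6$, $\mathbf{t}_2(-1) = -1$, $\mathbf{t}_2(-4) = -2$: they are the constant and polar coefficients of the weight-$3/2$ completion $h$. In fact $\mathbf{t}_2(-1)$ and $\mathbf{t}_2(-4)$ genuinely contribute to the left-hand side for those $n\ge 1$ with $4n+1$, respectively $n+1$, a perfect square, so the identity truly depends on getting these boundary values right.
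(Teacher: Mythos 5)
Your proposal is correct and follows essentially the same route as the paper: both rest on Zagier's modularity theorem for the trace generating function of $j_2$ (equivalently, that $g_2\in J_{2,1}^!$), specialize the associated Jacobi form at $z=0$ to land in $M_2^!(\SL_2(\Z))$, match the principal part $-2q^{-1}$ against $2Dj$, and conclude from $M_2(\SL_2(\Z))=\{0\}$. Your explicit verification of the polar and constant coefficients is a detail the paper leaves implicit, but the argument is the same.
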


It is immediately evident that Kaneko's identity closely parallels the following classical formula due to Hurwitz. Define $j_0(\tau) \coloneqq 1$, and let $\mathbf{t}_0(d)$ be defined via \eqref{eq:trace-singular}, applied to $j_0$. In addition, we set $\mathbf{t}_0(0) = -1/12$ and $\mathbf{t}_0(d)$ for all other integers $d$. The weighted class number $\mathbf{t}_0(d)$, also denoted by $H(d)$, is known as the \emph{Kronecker--Hurwitz class number}.

\begin{theorem}[Hurwitz~\cite{Hurwitz1885}]\label{thm:Hurwitz}
	For any integer $n \ge 1$, we have
	\[
		\sum_{r \in \Z} \mathbf{t}_0(4n-r^2) = \sum_{d \mid n} \max \left(d, \frac{n}{d}\right).
	\]
\end{theorem}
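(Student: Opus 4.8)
The plan is to evaluate the left-hand side by modular-forms methods: multiplying Zagier's mock modular generating series for the numbers $\mathbf{t}_0(d)$ by a theta series and applying holomorphic projection expresses $\sum_{r \in \Z}\mathbf{t}_0(4n-r^2)$ in terms of divisor sums, after which matching with $\sum_{d \mid n}\max(d,n/d)$ is an elementary manipulation. The input I would take for granted is Zagier's classical fact that the generating series
\[
	\mathcal{H}(\tau) \coloneqq \sum_{d \ge 0} \mathbf{t}_0(d)\, q^d = -\frac{1}{12} + \frac{1}{3}q^3 + \frac{1}{2}q^4 + q^7 + q^8 + \cdots
\]
(with $\mathbf{t}_0(d) = H(d)$ for $d > 0$ and constant term $\mathbf{t}_0(0) = -\tfrac{1}{12}$ as prescribed in the statement) is a mock modular form of weight $\tfrac{3}{2}$ on $\Gamma_0(4)$: adding to it an explicit non-holomorphic Eichler integral of the Jacobi theta series $\theta(\tau) \coloneqq \sum_{r \in \Z} q^{r^2}$ turns $\mathcal{H}$ into a harmonic Maass form $\widehat{\mathcal{H}}$ of weight $\tfrac{3}{2}$ whose shadow is a constant multiple of $\theta$. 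I would invoke this (due to Zagier) without reproving it.

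Next, form the product $\widehat{\mathcal{H}} \cdot \theta$, a real-analytic modular form of weight $2$ on $\Gamma_0(4)$, and apply holomorphic projection. The outcome is a holomorphic quasimodular form lying in the three-dimensional space $M_2(\Gamma_0(4)) \oplus \C E_2(\tau)$, spanned by $E_2(\tau)$, $E_2(2\tau)$, $E_2(4\tau)$, hence determined by a handful of low-order coefficients. Reading off the coefficient of $q^{4n}$: the naive product $\mathcal{H}\theta = \sum_N \big( \sum_{r \in \Z} \mathbf{t}_0(N-r^2) \big) q^N$ contributes $\sum_{r \in \Z} \mathbf{t}_0(4n - r^2)$ — and here the convention $\mathbf{t}_0(d) = 0$ for $d < 0$ and the value $\mathbf{t}_0(0) = -\tfrac{1}{12}$ at $r^2 = 4n$ are exactly what is needed for this to be the honest coefficient — while holomorphic projection contributes in addition a correction coming from the shadow, which at the exponents $q^{4n}$ evaluates to $2\lambda_1(n)$, where $\lambda_1(n) \coloneqq \tfrac12 \sum_{d \mid n} \min(d, n/d)$. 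Matching this total against the $q^{4n}$-coefficient of the resulting Eisenstein combination yields the classical Kronecker--Hurwitz relation
\[
	\sum_{r \in \Z} \mathbf{t}_0(4n - r^2) + 2\lambda_1(n) = 2\sigma_1(n).
\]

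It remains only to pass from $2\sigma_1(n) - 2\lambda_1(n)$ to the stated sum $\sum_{d \mid n} \max(d, n/d)$. Since $\max(d, n/d) + \min(d, n/d) = d + n/d$ for every divisor $d$ of $n$, summing over $d \mid n$ gives $\sum_{d \mid n} \max(d, n/d) + 2\lambda_1(n) = \sum_{d \mid n}(d + n/d) = 2\sigma_1(n)$, whence $\sum_{d \mid n} \max(d, n/d) = 2\sigma_1(n) - 2\lambda_1(n) = \sum_{r \in \Z} \mathbf{t}_0(4n - r^2)$, which is the assertion.

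The main obstacle is the holomorphic-projection step. Weight $2$ is precisely the boundary case in which the Petersson integral defining the projection fails to converge absolutely, so it must be regularized (Hecke's trick), and projecting a \emph{mixed mock} form produces the shadow correction mentioned above; one must check both that this correction specializes to $2\lambda_1(n)$ at the exponents $q^{4n}$ — at other exponents it is a genuinely different, messier expression, which is why only the $q^{4n}$-coefficients are extracted — and that the constants in the Eisenstein part, together with the boundary contribution $\mathbf{t}_0(0) = -\tfrac{1}{12}$ arising when $n$ is a perfect square, are all pinned down correctly. The holomorphic-projection formalism for mock modular forms that makes this rigorous goes back to Gross--Zagier and, in the precise form needed here, to Mertens. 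A route that avoids half-integral weight altogether is to interpret $\sum_{r \in \Z} \mathbf{t}_0(4n - r^2)$ as counting, with the stabilizer weights already built into $\mathbf{t}_0$, the $\SL_2(\Z)$-conjugacy classes of integral $2 \times 2$ matrices of determinant $n$ (via the Latimer--MacDuffee correspondence between such conjugacy classes and ideal classes of quadratic orders), the right-hand side then accounting for the remaining hyperbolic and parabolic classes; this is the simplest instance of the Eichler--Selberg trace formula, but regularizing the non-elliptic contributions is itself a nontrivial bookkeeping task.
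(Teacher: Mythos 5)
Your proposal is correct and follows essentially the same route the paper indicates for this result: it derives the Hurwitz relation as the $\nu=0$ case of the Eichler--Selberg-type relations, by pairing Zagier's weight-$3/2$ harmonic completion $\widehat{g}_0$ of the Hurwitz class number generating series with the theta series and applying regularized (weight-$2$) holomorphic projection, with the shadow contributing the $\sum_{d\mid n}\min(d,n/d)$ correction. The paper likewise defers the technical holomorphic-projection details to Zagier (1991) and Mertens, so your level of detail matches the source.
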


Behind this similarity lies a deeper structure that has gradually come to light through subsequent research.
This article surveys developments stemming from Kaneko's formula, with particular emphasis on those concerning the connection between Fourier coefficients of modular forms and the special values of modular functions. When our approach differs from or simplifies the original, we provide full proofs. Otherwise, we state the results and offer concise summaries of the underlying ideas.

\section*{Acknowledgement}

This survey is based on a talk given at the conference titled ``Modular Forms and Multiple Zeta Values --Conference in Honor of Masanobu Kaneko's 60+4th Birthday--", held at Kindai University in February 2025. I would like to express my sincere gratitude to Professor Masanobu Kaneko, who was my advisor during my student years, for his invaluable guidance and support throughout my career, even after graduation. I am also grateful to the organizers and all those involved in the conference for their efforts. This work was supported by JSPS KAKENHI (JP21K18141 and JP24K16901) and the MEXT Initiative through Kyushu University's Diversity and Super Global Training Program for Female and Young Faculty (SENTAN-Q).

\section{Preliminaries on Jacobi forms}

The proof of Kaneko's formula and its subsequent developments are fundamentally based on the modularity satisfies by traces of singular moduli, as established by Zagier~\cite{Zagier2002} and by Bruinier--Funke~\cite{BruinierFunke2006}. In this section, we review the necessary background on Jacobi forms, which play a fundamental role in describing this modularity.

\subsection{Jacobi forms}

We begin by recalling the notion of Jacobi forms. For further details, see Eichler--Zagier~\cite{EichlerZagier1985}. Let $k$ and $m$ be positive integers. For a function $\phi: \bbH \times \C \to \C$, we define the actions of $\gamma = \smat{a & b \\ c & d} \in \SL_2(\Z)$ and $(\lambda, \mu) \in \Z^2$ by
\begin{align*}
	(\phi|_{k,m} \gamma)(\tau, z) &\coloneqq (c\tau+d)^{-k} e^{-2\pi im \frac{cz^2}{c\tau+d}} \phi \left(\frac{a\tau+b}{c\tau+d}, \frac{z}{c\tau+d}\right),\\
	(\phi|_m (\lambda, \mu)) (\tau, z) &\coloneqq e^{2\pi im(\lambda^2 \tau+2\lambda z)} \phi(\tau, z + \lambda \tau + \mu).
\end{align*}

\begin{definition}
	A holomorphic function $\phi: \bbH \times \C \to \C$ is called a \emph{weakly holomorphic Jacobi form of weight $k$ and index $m$} if it satisfies the following:
	\begin{itemize}
		\item $\phi|_{k,m} \gamma = \phi$ for any $\gamma \in \SL_2(\Z)$.
		\item $\phi|_m (\lambda, \mu) = \phi$ for any $(\lambda, \mu) \in \Z^2$.
		\item It has the Fourier series expansion of the form
		\[
			\phi(\tau, z) = \sum_{\substack{n, r \in \Z \\ 4mn-r^2 \gg -\infty}} c(n, r) q^n \zeta^r, \quad (q = e^{2\pi i\tau}, \zeta = e^{2\pi iz}),
		\]
		where the condition $4mn - r^2 \gg -\infty$ means that there exists $N_0 \in \Z$ such that $c(n,r) = 0$ for all $4mn-r^2 \le N_0$.
	\end{itemize}
	We denotes by $J_{k,m}^!$ the $\C$-vector space consisting of such Jacobi forms.
\end{definition}

As shown in \cite[Theorem 2.2]{EichlerZagier1985}, the coefficients $c(n,r)$ depend only on $4mn-r^2$ and $r \pmod{2m}$. Therefore, they can be written in the form
\[
	c(n,r) = c_r(4mn-r^2), \qquad c_{r'}(N) = c_r(N) \text{ for } r' \equiv r \pmod{2m}.
\]
Accordingly, we assume from now on that the Jacobi form $\phi$ has a Fourier series expansion of the form
\begin{align}\label{eq:Jacobi-Fourier}
	\phi(\tau, z) = \sum_{n \gg -\infty} \sum_{r \in \Z} c_r(4mn-r^2) q^n \zeta^r.
\end{align}
The condition $4mn-r^2 \gg -\infty$ is assumed throughout, though we omit it from the notation below. For $\mu \in \Z/2m\Z$ and $N \in \Z$ satisfying $N \equiv -\mu^2 \pmod{4m}$, we set
\[
	c_\mu(N) \coloneqq c \left(\frac{N+r^2}{4m}, r\right)
\]
for any $r \in \Z$ with $r \equiv \mu \pmod{2m}$. In this setting, for each $\mu \in \Z/2m\Z$, we define
\begin{align*}
	h_\mu(\tau) &\coloneqq \sum_{\substack{N \gg -\infty \\ N \equiv -\mu^2\ (4m)}} c_\mu(N) q^{N/4m},\\
	\theta_{m, \mu}(\tau, z) &\coloneqq \sum_{\substack{r \in \Z \\ r \equiv \mu\ (2m)}} q^{r^2/4m} \zeta^r.
\end{align*}
Then the following decomposition, known as the \emph{theta decomposition} (see, \cite[Section 5]{EichlerZagier1985}), holds:
\begin{align}\label{eq:theta-decomp}
	\phi(\tau, z) = \sum_{\mu \in \Z/2m\Z} h_\mu(\tau) \theta_{m,\mu}(\tau, z).
\end{align}
The components $h_\mu(\tau)$ of the theta decomposition satisfy the following transformation laws, (see~\cite[Section 5]{EichlerZagier1985}):
\begin{align}\label{eq:vect-modular}
	h_\mu(\tau+1) = \mathbf{e}\left(-\frac{\mu^2}{4m}\right) h_\mu(\tau), \qquad h_\mu \left(-\frac{1}{\tau}\right) = \frac{\tau^k}{\sqrt{2m\tau/i}} \sum_{\nu \in \Z/2m\Z} \mathbf{e}\left(\frac{\mu \nu}{2m}\right) h_\nu(\tau).
\end{align}
Here we set $\mathbf{e}(x) \coloneqq e^{2\pi i x}$. Conversely, any collection of functions $(h_\mu(\tau))_{\mu \in \Z/2m\Z}$ satisfying the above transformation laws defines a Jacobi form of weight $k$ and index $m$.

\subsection{Connections with modular forms}

In the following, we present two approaches to constructing modular forms from Jacobi forms. We denote by $M_k^!(\Gamma)$ the $\C$-vector space of weakly holomorphic modular forms of weight $k$ on the group $\Gamma$, that is, functions that are holomorphic on $\bbH$, invariant under the usual slash action $|_k \gamma$ for $\gamma \in \Gamma$, and meromorphic at the cusps, (i.e., allowed to have poles only at the cusps of $\Gamma$).

\begin{lemma}[{\cite[Section 3]{EichlerZagier1985}}]
	For positive integers $k$ and $m$, we define the weighted heat operator $L_{k,m}$ by
	\[
		L_{k,m} \coloneqq -D - \frac{1}{16\pi^2 m} \left(\frac{\partial^2}{\partial z^2} + \frac{2k-1}{z} \frac{\partial}{\partial z} \right),
	\]
	where $D \coloneqq \frac{1}{2\pi i} \frac{\dd}{\dd \tau}$. Then, for any $\gamma \in \SL_2(\Z)$, we have
	\[
		L_{k,m}(\phi|_{k,m} \gamma) = (L_{k,m} \phi)|_{k+2, m} \gamma.
	\]
\end{lemma}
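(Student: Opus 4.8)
The plan is to reduce the identity to the generators of $\SL_2(\Z)$ and verify each case, the crux being that $L_{k,m}$ annihilates the relevant Jacobi automorphy factor. First, the desired relation is stable under the group law: if $L_{k,m}(\phi|_{k,m}\g)=(L_{k,m}\phi)|_{k+2,m}\g$ holds for $\g_1$ and $\g_2$, then, since $|_{k,m}$ and $|_{k+2,m}$ are genuine right actions, it holds for $\g_1\g_2$, and substituting $\phi\mapsto\phi|_{k,m}\g^{-1}$ shows it holds for $\g^{-1}$. As $S=\smat{0&-1\\1&0}$ and $T=\smat{1&1\\0&1}$ generate $\SL_2(\Z)$, it suffices to treat these two. (Both sides are read as functions on $\bbH\times(\C\setminus\{0\})$; the factor $1/z$ in $L_{k,m}$ is the only possible source of a pole and is harmless for this argument.) The case $\g=T$ is immediate: $(\phi|_{k,m}T)(\tau,z)=\phi(\tau+1,z)$, and the coefficients of $L_{k,m}$ do not involve $\tau$, so $L_{k,m}$ commutes with $\tau\mapsto\tau+1$.

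For $\g=S$, I would write $\phi|_{k,m}S=A\cdot\Phi$, where $A(\tau,z)=\tau^{-k}e^{-2\pi imz^2/\tau}$ is the weight-$k$ automorphy factor and $\Phi(\tau,z)=\phi(-1/\tau,z/\tau)$, expand $L_{k,m}(A\Phi)$ by the Leibniz rule, and collect according to how many derivatives land on $\phi$. The term with every derivative on $A$ contributes $(L_{k,m}A)\Phi$, and the first key computation is that this vanishes:
\[
	L_{k,m}A=\Bigl(\frac{k}{2\pi i\tau}+\frac{i}{4\pi\tau}+\frac{(2k-1)i}{4\pi\tau}\Bigr)A=0,
\]
the $z^2/\tau^2$ contributions of $-DA$ and of $-\frac{1}{16\pi^2m}\partial_z^2 A$ cancelling, and the surviving $1/\tau$ terms cancelling precisely because the coefficient of $\frac1z\partial_z$ is $2k-1$. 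For the remaining terms I would run the chain rule through $\partial_\tau(-1/\tau)=\tau^{-2}$, $\partial_\tau(z/\tau)=-z\tau^{-2}$, $\partial_z(z/\tau)=\tau^{-1}$. Writing $\partial_1\phi,\partial_2\phi,\partial_2^2\phi$ for the partials of $\phi$ evaluated at $(-1/\tau,z/\tau)$, the contributions to the coefficient of $\partial_2\phi$ coming from $-A\,D\Phi$ and from the cross term $-\frac{1}{8\pi^2m}(\partial_zA)(\partial_z\Phi)$ cancel, so that this coefficient is produced entirely by the $\frac1z\partial_z$ part of $L_{k,m}$, matching (after the substitution $u=z/\tau$) the weight term $\frac{2k-1}{u}\partial_u$ in $L_{k,m}\phi$; meanwhile the coefficients of $\partial_1\phi$ and $\partial_2^2\phi$ reassemble, picking up an extra factor $\tau^{-2}$ that turns $A$ into the weight-$(k+2)$ automorphy factor. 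This gives $L_{k,m}(\phi|_{k,m}S)=\tau^{-(k+2)}e^{-2\pi imz^2/\tau}(L_{k,m}\phi)(-1/\tau,z/\tau)=(L_{k,m}\phi)|_{k+2,m}S$.

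The only real difficulty is bookkeeping: the cancellations are sign-sensitive (one must, in particular, keep $i^2=-1$ in mind when differentiating the Gaussian $e^{-2\pi imz^2/\tau}$ twice), and one has to confirm that the chain-rule cross terms leave exactly the $\frac{2k-1}{z}$-contribution in the first-order part. Two streamlinings are worth recording. The same computation shows $L_{k,m}$ annihilates $(c\tau+d)^{-k}e^{-2\pi imcz^2/(c\tau+d)}$ for every $\g=\smat{a&b\\c&d}$, so one can instead run the argument uniformly for an arbitrary $\g$ and bypass the reduction to generators. Alternatively, one may argue via the theta decomposition $\phi=\sum_\mu h_\mu\theta_{m,\mu}$: the ``heat'' part $-D-\frac{1}{16\pi^2m}\partial_z^2$ of $L_{k,m}$ annihilates each $\theta_{m,\mu}$, so the statement follows from the transformation laws \eqref{eq:vect-modular} of the $h_\mu$ combined with the theta transformation.
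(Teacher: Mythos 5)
The paper does not prove this lemma at all --- it is stated with a bare citation to Eichler--Zagier, Section 3 --- so there is no in-paper argument to compare against; judged on its own, your verification is correct. The reduction to the generators $S$ and $T$ is legitimate because $|_{k,m}$ and $|_{k+2,m}$ are right actions, the $T$ case is indeed trivial, and the two decisive computations for $S$ check out: $L_{k,m}A=0$ for $A=\tau^{-k}e^{-2\pi i m z^2/\tau}$ (with $-DA$ contributing $-\tfrac{mz^2}{\tau^2}A$ against $+\tfrac{mz^2}{\tau^2}A$ from $-\tfrac{1}{16\pi^2m}\partial_z^2A$, and $\tfrac{i}{4\pi\tau}(-2k+1+(2k-1))=0$ for the remaining terms), and the cross term $-\tfrac{1}{8\pi^2m}(\partial_zA)(\partial_z\Phi)=-\tfrac{z}{2\pi i\tau^2}A\,\partial_2\phi$ exactly cancels the $+\tfrac{z}{2\pi i\tau^2}A\,\partial_2\phi$ produced by $-A\,D\Phi$, leaving $\tau^{-2}A\cdot(L_{k,m}\phi)(-1/\tau,z/\tau)$ as claimed. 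Your remark that the same computation kills the general automorphy factor $(c\tau+d)^{-k}e^{-2\pi imcz^2/(c\tau+d)}$ is also right and is arguably the cleaner way to present it, since it removes the need for generators. The one point I would flag is your second ``streamlining'': the heat part $-D-\tfrac{1}{16\pi^2m}\partial_z^2$ does annihilate each $\theta_{m,\mu}$, but the term $\tfrac{2k-1}{z}\partial_z$ does not pass through the theta decomposition as painlessly as you suggest --- $z^{-1}\partial_z\theta_{m,\mu}$ is not itself a theta component, so that route really requires working with the Taylor expansion in $z$ at $0$ (which is essentially Eichler--Zagier's development-coefficient argument, the one the paper in effect reruns in its proof of Proposition \ref{prop:RC-expansion}). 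As a main proof, keep the direct computation.
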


The following construction follows immediately from this lemma.

\begin{proposition}\label{prop:RC-bracket}
	Let $\nu \ge 0$ be an integer. For any $\phi \in J_{k,m}^!$, we have
	\[
		F_{\phi, \nu}(\tau) \coloneqq \lim_{z \to 0} L_{k+2\nu-2, m} \circ \cdots \circ L_{k+2, m} \circ L_{k, m} \phi(\tau, z) \in M_{k+2\nu}^!(\SL_2(\Z)).
	\]
\end{proposition}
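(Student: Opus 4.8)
The plan is to break Proposition~\ref{prop:RC-bracket} into four essentially independent checks. Write $\psi_0 \coloneqq \phi$ and, for $1 \le j \le \nu$,
\[
	\psi_j \coloneqq L_{k+2j-2,m} \circ \cdots \circ L_{k+2,m} \circ L_{k,m}\phi ,
\]
so that $F_{\phi,\nu}(\tau) = \lim_{z \to 0}\psi_\nu(\tau,z)$. I will verify, in order, that: (a) each $\psi_j$ is invariant under $|_{k+2j,m}\gamma$ for every $\gamma \in \SL_2(\Z)$; (b) each $\psi_j$ extends holomorphically across $z = 0$, so that $F_{\phi,\nu}(\tau) = \psi_\nu(\tau,0)$ is a well-defined holomorphic function on $\bbH$; (c) this function satisfies $F_{\phi,\nu}|_{k+2\nu}\gamma = F_{\phi,\nu}$ for all $\gamma \in \SL_2(\Z)$; and (d) it is meromorphic at the cusp. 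Together, (a)--(d) amount exactly to $F_{\phi,\nu} \in M_{k+2\nu}^!(\SL_2(\Z))$.

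Part (a) is a formal induction on $j$ built on the preceding lemma. For $j = 0$ the invariance of $\psi_0 = \phi$ is part of the definition of a Jacobi form; and if $\psi_j|_{k+2j,m}\gamma = \psi_j$, then the lemma gives
\[
	\psi_{j+1}|_{k+2j+2,m}\gamma = L_{k+2j,m}\bigl(\psi_j|_{k+2j,m}\gamma\bigr) = L_{k+2j,m}\psi_j = \psi_{j+1} .
\]
Granting (b), part (c) is then immediate: evaluating the identity $(\psi_\nu|_{k+2\nu,m}\gamma)(\tau,z) = \psi_\nu(\tau,z)$ at $z = 0$ turns the exponential factor $e^{-2\pi i m cz^2/(c\tau+d)}$ into $1$ and leaves $(c\tau+d)^{-(k+2\nu)}\psi_\nu\bigl(\tfrac{a\tau+b}{c\tau+d},0\bigr) = \psi_\nu(\tau,0)$, which is precisely the asserted weight-$(k+2\nu)$ modularity of $F_{\phi,\nu}$.

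The heart of the proof — and the only step I expect to demand real care rather than bookkeeping — is (b): showing that the factor $1/z$ inside each $L_{k',m}$ does not actually introduce a pole along $z = 0$. I would argue this by induction on $j$ while tracking $z$-expansions. Away from $z = 0$ there is nothing to prove, since $\phi$ is holomorphic and $D$, $\partial_z$, $\partial_z^2$ all preserve holomorphy. At $z = 0$ the key computation is that $\bigl(\partial_z^2 + \tfrac{2k'-1}{z}\partial_z\bigr)\bigl(c(\tau)z^n\bigr) = n(n+2k'-2)\,c(\tau)\,z^{n-2}$; thus this operator lowers the order of vanishing in $z$ by exactly $2$, annihilates the $z$-constant term, and — crucially — creates no $z^{-1}$-term as long as its input has no $z^1$-term. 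Now $\phi|_{k,m}\smat{-1 & 0 \\ 0 & -1} = \phi$ forces $\phi(\tau,-z) = (-1)^k\phi(\tau,z)$, so the Taylor expansion of $\phi$ at $z = 0$ contains only powers $z^n$ with $n \equiv k \pmod 2$; since each $L$ raises the weight by $2$, every $\psi_j$ inherits the same $z$-parity via (a), so (for $k$ even) no $z^1$-term ever occurs and no negative power of $z$ is produced anywhere along the composition. This is exactly what the constant $2k-1$ in the definition of $L_{k,m}$ is tuned to ensure, and carrying out this induction cleanly is the main obstacle.

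Part (d) will follow from the Fourier expansion \eqref{eq:Jacobi-Fourier}. Since $4mn - r^2 \gg -\infty$, for each fixed $n$ only finitely many $r$ contribute and $n$ itself is bounded below; expanding $\zeta^r = \sum_{l \ge 0}(2\pi i r)^l z^l / l!$ then shows that every coefficient in the $z$-expansion of $\phi$ is a Laurent series in $q$ of bounded-below order. The operator $D$ multiplies the $q^n$-coefficient by $n$, and the $z$-derivative operators merely reindex $z$-coefficients as in (b); both preserve the class of power series in $z$ whose $z$-coefficients are $q$-Laurent series of bounded-below order. Hence $F_{\phi,\nu}(\tau) = \psi_\nu(\tau,0)$ is such a $q$-Laurent series — holomorphic on $\bbH$ and with at worst a pole at the unique cusp of $\SL_2(\Z)$ — which, combined with (a)--(c), gives $F_{\phi,\nu} \in M_{k+2\nu}^!(\SL_2(\Z))$.
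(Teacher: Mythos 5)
Your proposal is correct and takes essentially the same route as the paper, which simply asserts that the proposition ``follows immediately'' from the equivariance lemma for $L_{k,m}$: your parts (a), (c), (d) are the routine verifications of that claim. Your part (b) --- the parity argument showing that, for even $k$, the Taylor expansion of $\phi$ in $z$ has no odd terms and hence the $\frac{2k-1}{z}\frac{\partial}{\partial z}$ part of the heat operator never produces a $z^{-1}$ term --- correctly supplies the one non-obvious step (well-definedness of the limit as $z \to 0$) that the paper leaves implicit, and your observation that this requires $k$ even matches the paper's implicit setting (all its applications have $k=2$).
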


\begin{example}
	When $\nu = 0$, we have
	\[
		F_{\phi, 0}(\tau) = \phi(\tau, 0) = \sum_{n \gg -\infty} \left(\sum_{r \in \Z} c_r(4mn-r^2) \right) q^n \in M_k^!(\SL_2(\Z)).
	\]
	In particular, if $\sum_{r \in \Z} c_r(4mn-r^2) = 0$ for all $n < 0$, we have $\phi(\tau, 0) \in M_k(\SL_2(\Z))$.
\end{example}

In general, the Fourier expansion of $F_{\phi, \nu}(\tau)$ is given by the following.

\begin{proposition}\label{prop:RC-expansion}
	For $k \ge 2$, we define the Gegenbauer polynomials $p_{k,l}(r,n)$ by
	\[
		\sum_{l=0}^\infty \binom{l/2+k-2}{k-2}p_{k,l}(r,n) X^l = \frac{1}{(1-rX+nX^2)^{k-1}},
	\]
	or equivalently,
	\begin{align*}
		a_{k,l}(r,n) \coloneqq \binom{l/2+k-2}{k-2} p_{k,l}(r,n) = \sum_{0 \le j \le l/2} (-1)^j \binom{l+k-2-j}{l-2j} \binom{j+k-2}{k-2} n^j r^{l-2j}.
	\end{align*}
	For $\phi \in J_{k,m}^!$, we have
	\[
		F_{\phi, \nu}(\tau) = \sum_{n \gg -\infty} \left(\sum_{r \in \Z} p_{k,2\nu} \left(\frac{r}{\sqrt{m}}, n \right) c_r(4mn-r^2) \right) q^n.
	\]
\end{proposition}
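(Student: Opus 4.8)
The plan is to localize the identity to a single Fourier monomial, carry out the heat-operator computation there, and match the result with the Gegenbauer polynomial $p_{k,2\nu}$.

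\emph{Step 1 (reduction to a monomial).} Write $\mathcal{L}^{(\nu)} \coloneqq L_{k+2\nu-2,m}\circ\cdots\circ L_{k,m}$, so $F_{\phi,\nu} = \lim_{z\to 0}\mathcal{L}^{(\nu)}\phi$. In $\mathcal{L}^{(\nu)}$ the operator $D$ acts on $q^n$ by multiplication by $n$, while every $z$-derivative acts only on the variable $\zeta$; writing $\phi = \sum_n q^n g_n(z)$ with $g_n(z) = \sum_r c_r(4mn-r^2)\zeta^r$, it follows that the $q^n$-coefficient of $F_{\phi,\nu}$ depends only on $g_n$. When $k$ is odd the claimed identity reads $0=0$: its right-hand side vanishes since $p_{k,2\nu}$ is even in its first argument while $c_r(N)=-c_{-r}(N)$, and $\mathcal{L}^{(\nu)}$ preserves parity in $z$, so $\mathcal{L}^{(\nu)}\phi$ is odd in $z$. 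We may therefore assume $k$ even, so each $g_n$ is even in $z$, and by linearity it suffices to treat $\phi(\tau,z) = q^n(\zeta^r+\zeta^{-r}) = 2q^n\cos(2\pi rz)$ and to prove $F_{\phi,\nu}(\tau) = 2\,p_{k,2\nu}(r/\sqrt m,\, n)\,q^n$.

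\emph{Step 2 (the heat-operator computation).} Substituting $w=z^2$ gives $\partial_z^2 + \tfrac{2\ell-1}{z}\partial_z = 4\bigl(w\partial_w^2 + \ell\,\partial_w\bigr)$, hence $L_{\ell,m} = -D - \tfrac{1}{4\pi^2 m}\bigl(w\partial_w^2 + \ell\,\partial_w\bigr)$, and $w\partial_w^2 + \ell\,\partial_w$ sends $w^j$ to $j(j+\ell-1)w^{j-1}$. Expanding $\mathcal{L}^{(\nu)}$ into $2^\nu$ monomials in the operators $-D$ and $-\tfrac{1}{4\pi^2 m}\bigl(w\partial_w^2+(k+2i)\partial_w\bigr)$ $(0\le i\le\nu-1)$, applying them to $2q^n\sum_{j\ge0}\tfrac{(-1)^j(2\pi r)^{2j}}{(2j)!}w^j$, and extracting the coefficient of $w^0$: a monomial using $l$ factors of the second kind receives a contribution only from the $w^l$-term of the input, the factors $(2\pi r)^{2l}$ from the Taylor coefficients and $(4\pi^2 m)^{-l}$ from the operators combine into $(r^2/m)^l$, and one is left with
\[
	F_{\phi,\nu}(\tau) = 2q^n\sum_{l=0}^{\nu}\frac{l!}{(2l)!}\,(-n)^{\nu-l}\Biggl(\ \sum_{0\le i_1<\cdots<i_l\le \nu-1}\ \prod_{s=1}^{l}\bigl(k+l-s+2i_s\bigr)\Biggr)\Bigl(\frac{r^2}{m}\Bigr)^{l},
\]
where $i_s$ denotes the $s$-th smallest element of the chosen set (the pairing matters because the factors $w\partial_w^2+\ell\partial_w$ are applied in increasing order of $\ell$). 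Equivalently, the scalars $P_{\nu,j}$ by which $\mathcal{L}^{(\nu)}$ multiplies the $w^j$-coefficient of such an input satisfy $P_{\nu+1,j} = -n\,P_{\nu,j} + \tfrac{k+2\nu+j}{2(2j+1)}\,\tfrac{r^2}{m}\,P_{\nu,j+1}$ with $P_{0,j}=1$, and $F_{\phi,\nu}=2q^nP_{\nu,0}$.

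\emph{Step 3 (identification with $p_{k,2\nu}$), and the main obstacle.} Inserting $j=\nu-l$ into the explicit formula for $a_{k,2\nu}=\binom{\nu+k-2}{k-2}p_{k,2\nu}$ from the statement and comparing the coefficients of $n^{\nu-l}(r^2/m)^l$, the goal $F_{\phi,\nu}=2q^np_{k,2\nu}(r/\sqrt m,n)$ becomes --- after the binomials collapse to a rising factorial --- the clean identity
\[
	\sum_{0\le i_1<\cdots<i_l\le\nu-1}\ \prod_{s=1}^{l}\bigl(k+l-s+2i_s\bigr) = \binom{\nu}{l}\prod_{t=1}^{l}\bigl(\nu+k-2+t\bigr),\qquad 0\le l\le\nu,
\]
which follows by induction on $\nu$ (split the subsets according to whether they contain $\nu-1$; the two sub-sums are sums of the same shape with $(\nu,l,k)$ replaced by $(\nu-1,l,k)$ and $(\nu-1,l-1,k+1)$, and one closes up using Pascal's rule). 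Alternatively one can bypass the explicit sum by solving the $P_{\nu,j}$-recursion directly and verifying that $\sum_{\nu\ge0}\binom{\nu+k-2}{k-2}P_{\nu,0}X^{2\nu} = \tfrac12\bigl((1-\tfrac{r}{\sqrt m}X+nX^2)^{1-k}+(1+\tfrac{r}{\sqrt m}X+nX^2)^{1-k}\bigr)$, which by the defining generating series of the $a_{k,l}$ is exactly $\sum_{\nu\ge0}a_{k,2\nu}(r/\sqrt m,n)X^{2\nu}$. Steps 1 and 2 are bookkeeping; the only point carrying real content is this last identification, and in the generating-function version the delicate feature is that the recursion for $P_{\nu,0}$ does not close on its own --- one must carry the auxiliary index $j$ along throughout.
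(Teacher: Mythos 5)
Your proof is correct, and its computational core coincides with the paper's: both reduce to the monomial $q^n(\zeta^r+\zeta^{-r})$, expand in $w=z^2$, and arrive at exactly the same two-index recursion --- your $P_{\nu+1,j}=-nP_{\nu,j}+\tfrac{k+2\nu+j}{2(2j+1)}\tfrac{r^2}{m}P_{\nu,j+1}$ is, after simplifying $\tfrac14\bigl(1+\tfrac{4\nu+2k-1}{2h+1}\bigr)=\tfrac{k+2\nu+h}{2(2h+1)}$, precisely the recursion the paper states for its auxiliary polynomials $p_{k,2\nu,h}$, with the same initial data $P_{0,j}=1=p_{k,0,h}$. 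Where you genuinely diverge is in closing the argument. The paper writes down an explicit closed form for the entire two-parameter family $p_{k,l,h}$, verifies that it satisfies the recursion, and reads off the specialization $p_{k,l,0}=p_{k,l}$; you instead solve only the $j=0$ slice, by expanding $\mathcal{L}^{(\nu)}$ into $2^\nu$ operator monomials and proving the product--sum identity $\sum_{i_1<\cdots<i_l}\prod_s(k+l-s+2i_s)=\binom{\nu}{l}\prod_t(\nu+k-2+t)$ by induction on $\nu$. I checked this identity and your induction: splitting on whether $\nu-1$ lies in the subset gives $S(\nu,l,k)=S(\nu-1,l,k)+(k+2\nu-2)\,S(\nu-1,l-1,k+1)$, and the closing step is the weighted identity $(\nu-l)(\nu+k-2)+l(2\nu+k-2)=\nu(\nu+k-2+l)$ rather than Pascal's rule proper, but it does close; the resulting coefficient of $n^{\nu-l}(r^2/m)^l$ matches $p_{k,2\nu}(r/\sqrt m,n)$ (I confirmed the binomial bookkeeping and the cases $\nu=1,2$). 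Your route spares you from having to guess the rather opaque closed form of $p_{k,l,h}$, at the cost of a combinatorial identity; the paper's route gives a one-line finish once that closed form is in hand. Your parity remark for odd $k$ is a small but real addition: the paper's symmetrization $\phi=\tfrac12\sum_r c_r q^n(\zeta^r+\zeta^{-r})$ tacitly uses $c_r=c_{-r}$, which holds only for even $k$, while for odd $k$ both sides of the proposition indeed vanish as you observe.
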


\begin{proof}
	This statement is given in~\cite[Theorem 3.1]{EichlerZagier1985}. Here, we provide a sketch of the proof using a slightly different approach. The equivalence of the two definitions of $p_{k,l}(r,n)$ can be shown verifying that they satisfy the same recurrence relation. For $0 \le j \le l/2$ and $k \ge 2$, we define
	\[
		S_{j,k,l}(r,n) \coloneqq (-1)^j \binom{l+k-2-j}{l-2j} \binom{j+k-2}{k-2} n^j r^{l-2j}.
	\]
	We set $S_{j,k,l}(r,n) = 0$ whenever the indices do not satisfy the conditions stated above. Then, for $l \ge 1$, it is straightforward to verify that
	\[
		S_{j,k,l}(r,n) - rS_{j,k,l-1}(r,n) + nS_{j-1,k,l-2}(r,n) = S_{j,k-1,l}(r,n).
	\]
	By summing both sides over $0 \le j \le l/2$, we obtain
	\[
		a_{k,l}(r,n) - r a_{k,l-1}(r,n) + n a_{k,l-2}(r,n) = a_{k-1,l}(r,n),
	\]
	which corresponds to the recurrence relation satisfied by the coefficients of $1/(1-rX+nX^2)^{k-1}$. Thus, the claim follows by induction on $k \ge 2$ and $l \ge 0$.
	
	For the later result, we consider a more general family of polynomials. For $h \ge 0$, we define
	\[
		p_{k,l,h}(r,n) \coloneqq \sum_{0 \le j \le l/2} (-1)^j \binom{l+k-2-j+2h}{l-2j+2h} \frac{\binom{j+k-2}{k-2}}{\binom{l/2+k-2}{k-2}} \frac{\binom{2h}{h} \binom{l/2-j+h}{h}}{\binom{l/2+k-2+h}{h} \binom{l+k-2-j+2h}{h}} n^j r^{l-2j}
	\]
	Then we can verify the Taylor expansion
	\[
		L_{k+2\nu-2, m} \circ \cdots \circ L_{k,m} q^n (\zeta^r + \zeta^{-r}) = 2 \sum_{h=0}^\infty p_{k,2\nu,h} \left(\frac{r}{\sqrt{m}}, n \right) q^n \frac{(2\pi irz)^{2h}}{(2h)!}
	\]
	by induction on $\nu$.	Indeed, as in the previous argument, we can check that the Taylor coefficients satisfy the same recurrence relation as 
	\[
		p_{k, 2\nu+2,h} \left(\frac{r}{\sqrt{m}}, n \right) = -n p_{k, 2\nu,h} \left(\frac{r}{\sqrt{m}}, n \right) + \frac{r^2}{4m} \left(1 + \frac{4\nu+2k-1}{2h+1}\right) p_{k,2\nu,h+1}\left(\frac{r}{\sqrt{m}}, n \right).
	\]
	Since $p_{k,l,0}(r,n) = p_{k,l}(r,n)$, we obtain
	\begin{align*}
		F_{\phi, \nu}(\tau) &= \frac{1}{2} \lim_{z \to 0} L_{k+2\nu-2, m} \circ \cdots \circ L_{k,m} \sum_{n \gg -\infty} \sum_{r \in \Z} c_r(4mn-r^2) q^n(\zeta^r + \zeta^{-r})\\
			&= \sum_{n \gg -\infty} \sum_{r \in \Z} c_r(4mn-r^2) p_{k, 2\nu} \left(\frac{r}{\sqrt{m}}, n \right) q^n
	\end{align*}
	as desired.
\end{proof}

\begin{example}
	The first few examples of $p_{k,2\nu}(r,n)$ are given by
	\[
		p_{k,0}(r,n) = 1, \quad p_{k,2}(r,n) = \frac{k}{2} r^2-n, \quad p_{k,4}(r,n) = \frac{(k+1)(k+2)}{12} r^4 - (k+1) nr^2 + n^2.
	\]
\end{example}

The above method produces only modular forms of level one. The next construction presents a way to obtain modular forms of level $m$ from Jacobi forms of index $m$.

\begin{proposition}\label{prop:Gphi}
	For any $\phi \in J_{k,m}^!$, we have
	\[
		G_\phi(\tau) \coloneqq \sum_{\ell=0}^{m-1} q^{\ell^2} \phi(m\tau, \ell \tau) \in M_k^!(\Gamma_0(m)).
	\]
\end{proposition}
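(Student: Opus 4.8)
The plan is to verify the three conditions defining a weakly holomorphic modular form of weight $k$ on $\Gamma_0(m)$: holomorphy on $\bbH$, invariance under the weight-$k$ action of $\Gamma_0(m)$, and meromorphy at every cusp. Holomorphy is immediate, since each $\phi(m\tau,\ell\tau)$ is holomorphic on $\bbH$. For the invariance, fix $\gamma = \smat{a & b\\ cm & d}\in\Gamma_0(m)$, so $ad-bcm=1$, and make the substitution $w = m\tau$. Two elementary observations drive the computation: first, $m\cdot\gamma\tau = \gamma'w$ where $\gamma' = \smat{a & mb\\ c & d}$ has determinant $ad-mbc=1$ and hence lies in $\SL_2(\Z)$; second, $\ell\cdot\gamma\tau = z_0/(cw+d)$ with $z_0\coloneqq\ell(a\tau+b)$, since $cw+d = cm\tau+d$. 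Substituting these into $\phi|_{k,m}\gamma'=\phi$, rewritten as
\[
	\phi\!\left(\gamma'w,\frac{z_0}{cw+d}\right) = (cw+d)^{k}\,\mathbf{e}\!\left(\frac{mc\,z_0^{2}}{cw+d}\right)\phi(w,z_0),
\]
and then applying the elliptic transformation $\phi|_m(\lambda,\mu)=\phi$ to reduce the argument $z_0 = \ell a\tau+\ell b$ to $\ell'\tau$ modulo the lattice $\Z w+\Z$ (where $0\le\ell'<m$ and $\lambda\in\Z$ are determined by $\ell a = \ell'+\lambda m$, and $\mu=\ell b\in\Z$), one rewrites the $\ell$-th summand of $G_\phi(\gamma\tau) = \sum_{\ell}\mathbf{e}(\ell^{2}\gamma\tau)\,\phi(m\gamma\tau,\ell\gamma\tau)$ as $(cm\tau+d)^{k}$ times an exponential factor times $\phi(m\tau,\ell'\tau)$.

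The substantive point is that this exponential factor is trivial. Gathering all the contributions, the exponent divided by $2\pi i$ is
\[
	\ell^{2}\,\gamma\tau + \frac{mc\,z_0^{2}}{cm\tau+d} - m\lambda^{2} w - 2m\lambda\,\ell'\tau.
\]
Since $z_0=\ell(a\tau+b)$, and since the identity $1+mc(a\tau+b) = a(cm\tau+d)$ is just a rearrangement of $ad-bcm=1$, the first two terms collapse to $\ell^{2}a^{2}\tau+\ell^{2}ab$; after substituting $w=m\tau$, the relation $\ell^{2}a^{2} = (\ell'+\lambda m)^{2} = (\ell')^{2}+2m\lambda\ell'+m^{2}\lambda^{2}$ cancels everything except $(\ell')^{2}\tau+\ell^{2}ab$, with $\ell^{2}ab\in\Z$. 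Hence $\mathbf{e}(\ell^{2}\gamma\tau)\,\phi(m\gamma\tau,\ell\gamma\tau) = (cm\tau+d)^{k}\,q^{(\ell')^{2}}\phi(m\tau,\ell'\tau)$. Because $ad\equiv1\pmod m$ forces $\gcd(a,m)=1$, the map $\ell\mapsto\ell'\equiv a\ell\pmod m$ permutes $\{0,1,\dots,m-1\}$, so summing over $\ell$ gives $G_\phi(\gamma\tau) = (cm\tau+d)^{k}G_\phi(\tau)$.

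It remains to check meromorphy at the cusps. At $\infty$ this is transparent: from \eqref{eq:Jacobi-Fourier} one has $q^{\ell^{2}}\phi(m\tau,\ell\tau) = \sum_{n,r}c_r(4mn-r^{2})\,q^{\,mn+\ell r+\ell^{2}}$, and completing the square gives $4(mn+\ell r+\ell^{2}) = (4mn-r^{2})+(r+2\ell)^{2}$, so the growth condition $4mn-r^{2}\gg-\infty$ forces the $q$-exponents of $G_\phi$ to be bounded below. For the remaining cusps one argues in the same spirit: for $\gamma\in\SL_2(\Z)$ one decomposes the matrix $\smat{m & 0\\ 0 & 1}\gamma$, which has determinant $m$, as a product of an element of $\SL_2(\Z)$ and an upper-triangular integral matrix of determinant $m$, applies the same two transformation laws of $\phi$, and finds that $G_\phi|_k\gamma$ has a Fourier expansion in a power $q^{1/N}$ (for some $N\ge1$) with exponents again bounded below, by virtue of the growth condition on $\phi$.

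I expect the only genuinely delicate step to be this last one — controlling $G_\phi$ at all cusps of $\Gamma_0(m)$ simultaneously and confirming that none carries an essential singularity — while the $\Gamma_0(m)$-invariance, though intricate-looking, is a mechanical unwinding of the two transformation laws once the substitution $w=m\tau$ is in place. As a sanity check, the case $m=1$ recovers $G_\phi(\tau)=\phi(\tau,0)\in M_k^!(\SL_2(\Z))$. One can also see the result structurally: substituting the theta decomposition \eqref{eq:theta-decomp} and carrying out the inner sum over $\ell$ yields $G_\phi(\tau) = \sum_{\mu\in\Z/2\Z}\bigl(\sum_{\nu\,\equiv\,\mu\ (2)}h_\nu(m\tau)\bigr)\Theta_\mu(\tau)$, where $\Theta_\mu(\tau)=\sum_{s\,\equiv\,\mu\ (2)}q^{s^{2}/4}$ and $\nu$ runs over $\Z/2m\Z$, displaying $G_\phi$ as a combination of products of vector-valued and theta pieces; but the direct argument above is more self-contained.
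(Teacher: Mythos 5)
Your proof is correct and follows essentially the same route as the paper: apply the modular transformation law of $\phi$ to the conjugated matrix $\smat{a & mb \\ c & d} \in \SL_2(\Z)$ acting on $w = m\tau$, use the elliptic transformation law to reduce $a\ell$ modulo $m$, and conclude via the bijection $\ell \mapsto a\ell$ on $\Z/m\Z$. Your explicit bookkeeping of the exponential factors and the completion-of-the-square check at the cusps simply spell out steps the paper compresses into one line.
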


\begin{proof}
	As holomorphy and the behavior at the cusps follow easily from the assumption $\phi \in J_{k,m}^!$, we focus here on verifying that $G_\phi|_k \gamma = G_\phi$ holds for any $\gamma = \smat{a & b \\ c & d} \in \Gamma_0(m)$. First, the modular and elliptic transformation laws yield
	\begin{align*}
		(G_\phi|_k \gamma)(\tau) &= (c\tau + d)^{-k} \sum_{\ell=0}^{m-1} e^{2\pi i \ell^2 \frac{a\tau+b}{c\tau+d}} \phi \left(\pmat{a & bm \\ c/m & d} \cdot m\tau, \frac{\ell(a\tau+b)}{c\tau + d}\right)\\
			&= \sum_{\ell=0}^{m-1} e^{2\pi i \ell^2 a(a\tau+b)} \phi (m\tau, \ell(a\tau+b))\\
			&= \sum_{\ell=0}^{m-1} q^{(a\ell)^2} \phi (m\tau, a\ell\tau).
	\end{align*}
	Since $a$ is coprime to $m$, multiplication by $a$ induces a bijection on $\Z/m\Z$. Therefore, the elliptic transformation law of $\phi$ implies that the expression equals $G_\phi$.
\end{proof}

Here as well, we describe the Fourier expansion explicitly.

\begin{proposition}\label{prop:Gphi-Fourier}
	For the coefficients $c_\mu(N)$ of a Jacobi form $\phi \in J_{k,m}^!$, we set
	\[
		c^*(N) \coloneqq \sum_{\substack{\mu \in \Z/2m \Z \\ N \equiv -\mu^2\ (4m)}} c_\mu(N).
	\]
	Then, we have
	\[
		G_\phi(\tau) = \sum_{n \gg -\infty} \left(\sum_{r \in \Z} c^*(4n-r^2) \right) q^n.
	\]
\end{proposition}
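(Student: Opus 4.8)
The plan is a direct computation: expand $\phi(m\tau,\ell\tau)$ via the Fourier series \eqref{eq:Jacobi-Fourier}, substitute into the definition of $G_\phi$, and reindex the resulting triple sum. Writing $\phi(\tau,z) = \sum_{n_0,r_0} c(n_0,r_0)\,q^{n_0}\zeta^{r_0}$ with $c(n_0,r_0) = c_{r_0}(4mn_0-r_0^2)$, one gets $q^{\ell^2}\phi(m\tau,\ell\tau) = \sum_{n_0,r_0} c(n_0,r_0)\,q^{mn_0+r_0\ell+\ell^2}$, and hence
\[
	G_\phi(\tau) = \sum_{\ell=0}^{m-1}\ \sum_{n_0,r_0} c(n_0,r_0)\,q^{mn_0+r_0\ell+\ell^2}.
\]

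Next I would change variables from $(n_0,r_0)$ to $(n,r)$ by setting $n \coloneqq mn_0 + r_0\ell + \ell^2$ (the new exponent of $q$) and $r \coloneqq r_0 + 2\ell$. For fixed $\ell$ this is an injection of $\Z^2$ into $\Z^2$ whose image is the set of $(n,r)$ with $m \mid n - r\ell + \ell^2$ (so that $n_0 = (n-r\ell+\ell^2)/m$ is an integer), and a short computation gives $4mn_0 - r_0^2 = 4n - r^2$, so that $c(n_0,r_0) = c_{r_0}(4n-r^2)$ depends only on $4n-r^2$ and on $\mu \coloneqq (r-2\ell)\bmod 2m$. Collecting powers of $q$ therefore yields
\[
	G_\phi(\tau) = \sum_n q^n \sum_{r\in\Z}\ \sum_{\substack{0\le \ell\le m-1\\ m\,\mid\, n-r\ell+\ell^2}} c_{r-2\ell}(4n-r^2).
\]

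The crux is to identify the innermost sum with $c^*(4n-r^2)$. This rests on the elementary identity
\[
	(4n-r^2) + (r-2\ell)^2 = 4\bigl(n - r\ell + \ell^2\bigr),
\]
which shows that the constraint $m \mid n - r\ell + \ell^2$ is exactly the congruence $4n-r^2 \equiv -(r-2\ell)^2 \pmod{4m}$ entering the definition of $c^*$. Since $\ell \mapsto (r-2\ell)\bmod 2m$ is a bijection from $\{0,1,\dots,m-1\}$ onto the residues $\mu \in \Z/2m\Z$ with $\mu \equiv r \pmod 2$, and since $4n-r^2 \equiv -\mu^2 \pmod 4$ already forces $\mu \equiv r \pmod 2$, the admissible $\ell$ correspond bijectively to precisely the $\mu \in \Z/2m\Z$ with $4n-r^2 \equiv -\mu^2 \pmod{4m}$. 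Hence the inner sum equals $\sum_{\mu} c_\mu(4n-r^2) = c^*(4n-r^2)$, which is the asserted expansion.

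I expect no real obstacle here: the argument is pure bookkeeping, and the only step demanding care is checking that the substitution $(\ell,n_0,r_0)\leftrightarrow(\ell,n,r)$ is a genuine bijection and that the divisibility condition on $\ell$ matches the congruence defining $c^*$ — that is, making the displayed identity do its work. One should also note that $4n-r^2 = 4mn_0 - r_0^2 \gg -\infty$ under the change of variables, so $G_\phi$ is again a $q$-series bounded below, consistent with Proposition~\ref{prop:Gphi}. Alternatively, the same computation can be carried out starting from the theta decomposition \eqref{eq:theta-decomp}, where the coefficients $c_\mu$ appear directly; the combinatorial content is identical.
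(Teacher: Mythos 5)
Your proof is correct: the change of variables $(n,r)=(mn_0+r_0\ell+\ell^2,\,r_0+2\ell)$ is a genuine bijection for each fixed $\ell$, the identity $(4n-r^2)+(r-2\ell)^2=4(n-r\ell+\ell^2)$ does exactly the work you ask of it, and the observation that $4n-r^2\equiv-\mu^2\pmod 4$ forces $\mu\equiv r\pmod 2$ correctly shows that the admissible $\ell$ are in bijection with the $\mu$ appearing in the definition of $c^*$. The paper reaches the same conclusion by a different organization of the same combinatorics: it first applies the theta decomposition \eqref{eq:theta-decomp}, notes that $q^{\ell^2}\theta_{m,\mu}(m\tau,\ell\tau)=\sum_{r\equiv\mu\,(2m)}q^{(r+2\ell)^2/4}$, and uses the fact that $(\ell,r)\mapsto r+2\ell$ sweeps out each integer $\equiv\mu\pmod 2$ exactly once to factor $G_\phi$ as a product $\sum_j\bigl(\sum_{\mu\equiv j\,(2)}h_\mu(m\tau)\bigr)\bigl(\sum_{r\equiv j\,(2)}q^{r^2/4}\bigr)$, from which the coefficient formula is read off. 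The underlying bijection $\{0,\dots,m-1\}\ni\ell\leftrightarrow$ even residues mod $2m$ is identical in both arguments; what the paper's route buys is that the product structure makes the coefficient extraction transparent and keeps the congruence condition $N\equiv-\mu^2\pmod{4m}$ packaged inside $h_\mu$, whereas your raw reindexing is more elementary (no theta decomposition needed) at the cost of tracking the divisibility condition $m\mid n-r\ell+\ell^2$ by hand. Either version is a complete proof.
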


\begin{proof}
	By applying the theta decomposition~\eqref{eq:theta-decomp}, we obtain
	\begin{align*}
		G_\phi(\tau) 
		&= \sum_{\mu \in \Z/2m \Z} h_\mu(m\tau) \sum_{\ell=0}^{m-1} \sum_{r \equiv \mu\ (2m)} q^{(r+2\ell)^2/4} \\
		&= \sum_{j \in \{0, 1\}} \bigg(\sum_{\substack{\mu \in \Z/2m \Z \\ \mu \equiv j\ (2)}} h_\mu(m\tau) \bigg) \cdot \bigg(\sum_{r \equiv j\ (2)} q^{r^2/4} \bigg).
	\end{align*}
	For each $j \in \{0, 1\}$, we observe that
	\begin{align*}
		\sum_{\substack{\mu \in \Z/2m \Z \\ \mu \equiv j\ (2)}} h_\mu(m\tau) = \sum_{N \equiv -j\ (4)} \bigg(\sum_{\substack{\mu \in \Z/2m\Z \\ N \equiv -\mu^2\ (4m)}} c_\mu(N) \bigg) q^{N/4}.
	\end{align*}
	Therefore, the claim follows.
\end{proof}

\begin{example}\label{ex:m=p}
	When $m=1$, we have $G_\phi = F_{\phi, 0}$. Let $m=p$ be a prime and $k \in 2\Z$. In this case, as shown in~\cite[Theorem 2.2]{EichlerZagier1985}, the coefficients $c(n,r)$ depend only on $4pn-r^2$. Therefore, for $N \equiv 0, 3 \pmod{4}$, we have
	\begin{align*}
		c^*(N) &= \#\{\mu \in \Z/2p \Z : N \equiv -\mu^2 \pmod{4p}\} \cdot c(N)\\
			&= \left(1 + \left(\frac{-N}{p}\right) \right) c(N),
	\end{align*}
	where $\left(\frac{\cdot}{\cdot}\right)$ is the Kronecker symbol. Since $c(N) = c^*(N) = 0$ if $\left(\frac{-N}{p}\right) = -1$, we also have
	\[
		c^*(N) = 2^{1-\omega(\gcd(p,N))} c(N),
	\]
	with $\omega(n)$ denoting the number of distinct prime divisors of $n$.
\end{example}

\section{Kaneko's formula}

\subsection{Zagier's generating function}

The key to the proof of Kaneko's formula lies in a result of Zagier~\cite{Zagier2002}, which states that the generating function of $\mathbf{t}_m(d)$ is a modular form of weight $3/2$ on $\Gamma_0(4)$. Here, we give an equivalent statement in terms of Jacobi forms.

\begin{theorem}[Zagier~\cite{Zagier2002}]\label{thm:Zagier-gen}
	We put
	\[
		\mathbf{t}_m(0) = 2\sigma_1(m),
	\]
	and for negative $d$,
	\[
		\mathbf{t}_m(d) = \begin{cases}
			-\kappa &\text{if } d = -\kappa^2 \text{ for some positive integer $\kappa \mid m$},\\
			0 &\text{otherwise}.
		\end{cases}
	\]
	Then we have
	\[
		g_m(\tau, z) \coloneqq \sum_{n \gg -\infty} \sum_{r \in \Z} \mathbf{t}_m(4n-r^2) q^n \zeta^r \in J_{2,1}^!.
	\]
\end{theorem}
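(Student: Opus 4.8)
The theorem restates, through the theta decomposition of Section~2, Zagier's result that the generating function of the traces $\mathbf t_m(d)$ is a weight-$3/2$ modular form on $\Gamma_0(4)$. So the plan is: first a formal reduction, using only the material already developed, to the existence and uniqueness of the relevant weight-$3/2$ object, and then the genuinely geometric step of identifying its Fourier coefficients with traces of singular moduli.

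\textbf{Reduction.} With Jacobi index $1$, the proposed Fourier coefficients are $c_r(4n-r^2)=\mathbf t_m(4n-r^2)$, which depend only on $N=4n-r^2$ (hence also only on $N$ and $r\bmod 2$), so the formula does specify a consistent candidate Fourier series. Its theta components are $h_\mu(\tau)=\sum_{N\equiv-\mu^2\,(4)}\mathbf t_m(N)\,q^{N/4}$, $\mu\in\Z/2\Z$, and by the converse part of~\eqref{eq:vect-modular} the statement is equivalent to: $(h_0,h_1)$ transforms as a weight-$3/2$ vector-valued modular form for the Weil representation of $(\Z,x^2)$; equivalently (Eichler--Zagier) the scalar form $\mathfrak g_m(\tau)=h_0(4\tau)+h_1(4\tau)=\sum_{D\equiv 0,3\,(4)}\mathbf t_m(D)\,q^{D}$ lies in Kohnen's plus space $M^{!,+}_{3/2}(\Gamma_0(4))$. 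Its prescribed polar part is $\sum_{\kappa\mid m}(-\kappa)q^{-\kappa^2}$, supported on exponents $\equiv 0,3\pmod 4$ as it must be.

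\textbf{Uniqueness and existence.} Holomorphic Jacobi forms of weight $2$ and index $1$ vanish, since $J_{2,1}\cong M^{+}_{3/2}(\Gamma_0(4))\cong M_2(\SL_2(\Z))=0$. Hence any two elements of $J^!_{2,1}$ with the same polar part differ by a holomorphic Jacobi form, namely $0$; so an element of $J^!_{2,1}$ (equivalently of $M^{!,+}_{3/2}(\Gamma_0(4))$) is determined by its polar part, and it is enough to exhibit some $\mathfrak G_m\in M^{!,+}_{3/2}(\Gamma_0(4))$ with polar part $\sum_{\kappa\mid m}(-\kappa)q^{-\kappa^2}$. By Serre duality — the residue theorem applied to the product of $\mathfrak G_m$ with a cusp form of complementary weight $1/2$ — the obstruction to prescribing a polar part lies in the space of weight-$1/2$ cusp forms, which is $0$ by Serre--Stark. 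So $\mathfrak G_m$ exists, and the theorem reduces to showing that its remaining coefficients are $\mathbf t_m(0)=2\sigma_1(m)$ in degree $0$ and $\mathbf t_m(D)=\sum_{Q\in\mathcal Q_D/\Gamma}|\Gamma_Q|^{-1}j_m(\alpha_Q)$ in degree $D>0$.

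\textbf{Identifying the coefficients with traces, and the main obstacle.} This last point is Zagier's theorem and is where the work lies: a single analytically defined weight-$3/2$ form must encode the CM values of $j_m$. I would prove it by the regularized theta lift of Bruinier--Funke. Let $\Theta(\tau,z)$ be the Kudla--Millson Siegel theta kernel attached to a lattice of signature $(1,2)$ and level $4$ (the one whose positive-norm vectors, modulo $\SL_2(\Z)$, are the forms $Q\in\mathcal Q_D/\Gamma$): it is modular of weight $3/2$ in $\tau$ for $\Gamma_0(4)$ and $\SL_2(\Z)$-invariant in $z$. The regularized integral $I_m(\tau)=\int^{\mathrm{reg}}_{\SL_2(\Z)\backslash\bbH}j_m(z)\,\Theta(\tau,z)\,\frac{dx\,dy}{y^2}$ is then a weight-$3/2$ form on $\Gamma_0(4)$ by the automorphy of $\Theta$ in $\tau$; unfolding against the Fourier expansion of $\Theta$, its $q^{D}$-coefficient for $D>0$ collapses to the finite sum over $\SL_2(\Z)$-classes of lattice vectors of norm $D$, i.e.\ over $\mathcal Q_D/\Gamma$, of $|\Gamma_Q|^{-1}j_m(\alpha_Q)$, which is $\mathbf t_m(D)$; the $D\le 0$ terms come from the behaviour of the integrand near the cusp and from the constant term of $j_m$, and yield precisely $2\sigma_1(m)$ and the values $-\kappa$. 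Matching polar parts identifies $I_m$ with $\mathfrak G_m$. The delicate points — the true obstacle — are that the weight-$3/2$ theta integral sits at the edge of convergence and must be regularized in the style of Borcherds, and that extracting the exact constants in degrees $0$ and $-\kappa^2$ requires carefully isolating the contributions of the degenerate (isotropic and split) vectors and of the cusp. Alternatives: reduce to $m=1$ via the Hecke-theoretic relations among the $j_m$ and Hecke-equivariance of both sides, then use Borcherds' product expansion of $\mathfrak G_1$, whose product side equals $\prod_{Q\in\mathcal Q_D/\Gamma}(j(\tau)-j(\alpha_Q))^{1/|\Gamma_Q|}$ up to a power of $\Delta$, and compare logarithmic $q$-expansions; or construct $\mathfrak G_m$ as a weight-$3/2$ Poincaré series and evaluate its Fourier coefficients directly.
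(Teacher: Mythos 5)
Your outline is correct, and its skeleton---theta-decompose, observe that $J_{2,1}=0$ so that an element of $J^!_{2,1}$ is determined by its singular part, settle existence by pairing the prescribed principal part against the (vanishing, by Serre--Stark) space of weight-$1/2$ cusp forms, and then identify the remaining coefficients with traces---is sound. One caveat: your uniqueness step needs ``polar part'' to mean the coefficients with $4n-r^2<0$, since weak Jacobi forms of weight $2$ and index $1$ (e.g.\ $E_4\phi_{-2,1}$ in Eichler--Zagier's notation) are nonzero but do carry negative-discriminant coefficients; read that way, the argument survives. Where you genuinely diverge from the paper is at the identification step. The paper, being a survey, gives no proof of \cref{thm:Zagier-gen}; it describes Zagier's original route: the recurrences of \cref{cor:Kaneko-Zagier-rel} determine the coefficients of the unique candidate form inductively, so it suffices to check that the traces $\mathbf{t}_1(d)$ satisfy the same recurrences, which Zagier does via modular polynomials, the case of general $m$ then following from Hecke operators (your own suggested reduction to $m=1$). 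You instead invoke the Bruinier--Funke regularized Kudla--Millson theta lift, which the paper introduces only afterwards as the unifying framework behind the higher-level generalizations. The trade-off is real: Zagier's argument is elementary once the recurrences are verified, but that verification is a computation and the values $\mathbf{t}_m(d)$ for $d\le 0$ enter by fiat; the theta lift is analytically heavier (regularization at the edge of convergence, isolating the isotropic-vector and cusp contributions that produce exactly the $d\le 0$ terms---you correctly flag these as the true obstacles) but explains those boundary values geometrically and scales to $\Gamma_0(N)$. Like the paper, you ultimately defer the hard step to cited machinery, which is a defensible stopping point for a result the paper itself only cites.
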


As an immediate consequence of this result, we obtain the following recurrence relations.

\begin{corollary}\label{cor:Kaneko-Zagier-rel}
	For any integer $n \ge 1$, we have
	\[
		\sum_{r \in \Z} \mathbf{t}_1(4n-r^2) = 0 \quad \text{ and } \quad \sum_{r \in \Z} r^2 \mathbf{t}_1(4n-r^2) = -480 \sigma_3(n).
	\]
\end{corollary}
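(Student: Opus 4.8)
The plan is to apply the operators $F_{\phi,\nu}$ of \cref{prop:RC-bracket} to Zagier's Jacobi form. By \cref{thm:Zagier-gen} with $m=1$, the series $g_1(\tau,z)=\sum_n\sum_{r\in\Z}\mathbf{t}_1(4n-r^2)q^n\zeta^r$ lies in $J_{2,1}^!$, so for each $\nu\ge 0$ the function $F_{g_1,\nu}$ is a weakly holomorphic modular form of weight $2+2\nu$ on $\SL_2(\Z)$ whose Fourier coefficients are given explicitly by \cref{prop:RC-expansion}. The two identities will come out of taking $\nu=0$ and $\nu=1$ and using that $M_2(\SL_2(\Z))=\{0\}$ while $M_4(\SL_2(\Z))=\C E_4$.

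First I would record the low-order coefficients of $g_1$. From the recipe in \cref{thm:Zagier-gen} with $m=1$, the only divisor of $1$ being $\kappa=1$, we have $\mathbf{t}_1(0)=2\sigma_1(1)=2$, $\mathbf{t}_1(-1)=-1$, and $\mathbf{t}_1(d)=0$ for every other $d<0$. In particular $\mathbf{t}_1(4n-r^2)=0$ whenever $n<0$ (since then $4n-r^2\le-4$), so $g_1$ has no terms with $n<0$, and its only nonzero terms with $n\le 0$ sit at $(n,r)=(0,0)$ and $(0,\pm1)$.

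Taking $\nu=0$, \cref{prop:RC-bracket} gives $F_{g_1,0}(\tau)=g_1(\tau,0)=\sum_n\bigl(\sum_{r\in\Z}\mathbf{t}_1(4n-r^2)\bigr)q^n\in M_2^!(\SL_2(\Z))$, and by the previous paragraph this has no negative $q$-powers, hence lies in $M_2(\SL_2(\Z))=\{0\}$; therefore $\sum_{r\in\Z}\mathbf{t}_1(4n-r^2)=0$ for every $n$, which is the first identity (consistently, its constant term is $\mathbf{t}_1(0)+2\mathbf{t}_1(-1)=0$). Taking $\nu=1$, \cref{prop:RC-expansion} with $k=2$, $m=1$ and $p_{2,2}(r,n)=r^2-n$ gives
\[
	F_{g_1,1}(\tau)=\sum_n\Bigl(\sum_{r\in\Z}(r^2-n)\,\mathbf{t}_1(4n-r^2)\Bigr)q^n\in M_4^!(\SL_2(\Z)),
\]
which again has no negative $q$-powers, so $F_{g_1,1}\in M_4(\SL_2(\Z))=\C E_4$. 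Its constant term equals $\sum_{r\in\Z}r^2\,\mathbf{t}_1(-r^2)=2\mathbf{t}_1(-1)=-2$, so $F_{g_1,1}=-2E_4=-2-480\sum_{n\ge1}\sigma_3(n)q^n$. Comparing coefficients of $q^n$ for $n\ge1$ and substituting the already-proved identity $\sum_r\mathbf{t}_1(4n-r^2)=0$ yields $\sum_{r\in\Z}r^2\,\mathbf{t}_1(4n-r^2)=-480\sigma_3(n)$.

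There is no substantial obstacle here: everything reduces to Zagier's \cref{thm:Zagier-gen} and the explicit Fourier expansion of \cref{prop:RC-expansion}. The only care needed is the bookkeeping of the finitely many coefficients $\mathbf{t}_1(d)$ with $d\le 0$, which is what guarantees that $F_{g_1,0}$ and $F_{g_1,1}$ are holomorphic at the cusp (so that $\dim M_2=0$ and $\dim M_4=1$ may be invoked) and which fixes their constant terms. It is also cleanest to establish the $\nu=0$ identity first, since it cancels the $-n\sum_r\mathbf{t}_1(4n-r^2)$ term that appears when one expands the $\nu=1$ coefficient.
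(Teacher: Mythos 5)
Your proposal is correct and follows essentially the same route as the paper: apply \cref{prop:RC-bracket} and \cref{prop:RC-expansion} to $g_1 \in J_{2,1}^!$ for $\nu = 0, 1$, note that holomorphy at the cusp places the results in $M_2(\SL_2(\Z)) = \{0\}$ and $M_4(\SL_2(\Z)) = \C E_4$ respectively, and pin down the constant $-2$ from $\mathbf{t}_1(-1) = -1$. Your extra bookkeeping of the coefficients $\mathbf{t}_1(d)$ for $d \le 0$ just makes explicit what the paper leaves implicit.
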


\begin{proof}
	Applying \cref{prop:RC-bracket} and \cref{prop:RC-expansion} for $\nu = 0, 1$, we have
	\begin{align*}
		F_{g_1, 0}(\tau) &= \sum_{n =0}^\infty \left(\sum_{r \in \Z} \mathbf{t}_1(4n-r^2)\right) q^n \in M_2(\SL_2(\Z)) = \{0\},\\
		F_{g_1, 1}(\tau) &= \sum_{n=0}^\infty \left(\sum_{r \in \Z} (r^2- n) \mathbf{t}_1(4n-r^2)\right) q^n \in M_4(\SL_2(\Z)) = \C E_4.
	\end{align*}
	The first assertion follows from the first identity, and the second identity shows that
	\[
		F_{g_1, 1}(\tau) = -2 + O(q) = -2E_4(\tau) = -2 - 480 \sum_{n=1}^\infty \sigma_3(n) q^n,
	\]
	which implies the second assertion.
\end{proof}

This recurrence allows us to compute the values of $\mathbf{t}_1(d)$ inductively, even without relying on its original definition in terms of singular moduli. Conversely, Zagier proves \cref{thm:Zagier-gen} for $g_1$ by verifying that $\mathbf{t}_1(d)$ satisfies this recurrence using modular polynomials (in the context of half-integral weight modular forms). We note that it is also possible to express $g_1(\tau, z)$ in terms of half-integral weight modular forms as 
\begin{align}\label{eq:Zagier-half}
	g_1(\tau) \coloneqq -q^{-1} + 2 + \sum_{\substack{d > 0 \\ d \equiv 0, 3\ (4)}} \mathbf{t}_1(d) q^d = -\theta_1(\tau) \frac{E_4(4\tau)}{\eta(4\tau)^6},
\end{align}
where
\[
	\theta_1(\tau) \coloneqq \sum_{n \in \Z} (-1)^n q^{n^2}, \qquad \eta(\tau) \coloneqq q^{1/24} \prod_{n=1}^\infty (1-q^n).
\]
As he further pointed out, \cref{thm:Zagier-gen} and its proof can also be reinterpreted in light of Borcherds' results~\cite{Borcherds1995}. In addition, by considering the action of Hecke operators, the proof can be extended to general $m \ge 1$. The relation to the Borcherds product will be discussed in \cref{sec:Rohrlich}.

\subsection{Proof of \cref{thm:Kaneko-formula}}

The formula that Kaneko initially arrived at is not \cref{thm:Kaneko-formula}, but rather the one involving $\mathbf{t}_1(d)$ stated below.

\begin{theorem}[Kaneko~\cite{Kaneko1996}]\label{thm:Kaneko-original}
	For any integer $n \ge 1$, we have
	\[
		\mathrm{Coeff}_{q^n}(j) = \frac{1}{n} \sum_{r \in \Z} \left\{\mathbf{t}_1(n-r^2) - \frac{(-1)^{n+r}}{4} \mathbf{t}_1(4n-r^2) + \frac{(-1)^r}{4} \mathbf{t}_1(16n-r^2) \right\}.
	\]
\end{theorem}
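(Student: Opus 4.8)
The plan is to package the right-hand side into a single $q$-series and identify it with the derivative $Dj = q\,\frac{\dd j}{\dd q}$. Since $j$ is modular of weight $0$ for $\SL_2(\Z)$, the function $Dj$ lies in $M_2^!(\SL_2(\Z))$, and because $M_2(\SL_2(\Z)) = \{0\}$ it is the \emph{unique} element of $M_2^!(\SL_2(\Z))$ with principal part $-q^{-1}$; its coefficient of $q^n$ is $n\,\mathrm{Coeff}_{q^n}(j)$ for $n \ge 1$. Hence it suffices to show that
\[
	\Psi(\tau) \coloneqq \sum_{n \in \Z}\Biggl(\sum_{r \in \Z}\Bigl\{\mathbf{t}_1(n-r^2) - \tfrac{(-1)^{n+r}}{4}\mathbf{t}_1(4n-r^2) + \tfrac{(-1)^r}{4}\mathbf{t}_1(16n-r^2)\Bigr\}\Biggr)q^n
\]
belongs to $M_2^!(\SL_2(\Z))$ and has principal part $-q^{-1}$; then $\Psi = Dj$, and comparing coefficients of $q^n$ for $n \ge 1$ gives the identity. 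The statement about the principal part (together with the vanishing of the constant term, which is then automatic) is a short computation using only $\mathbf{t}_1(d) = 0$ for $d \le -2$, $\mathbf{t}_1(0) = 2$, and $\mathbf{t}_1(-1) = -1$.

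Next I would express $\Psi$ through Zagier's data. Write $g_1(\tau, z) = \sum_{n, r}\mathbf{t}_1(4n-r^2)q^n\zeta^r \in J_{2,1}^!$ as in \cref{thm:Zagier-gen}, let $g_1(\tau) = \sum_d \mathbf{t}_1(d)q^d$ be the associated weight-$\tfrac32$ form from \eqref{eq:Zagier-half} (an element of $M_{3/2}^!(\Gamma_0(4))$ in Kohnen's plus space), and set $\theta(\tau) \coloneqq \sum_{r \in \Z}q^{r^2} \in M_{1/2}(\Gamma_0(4))$. A routine re-indexing shows that the first of the three sums defining $\Psi$ is the coefficient series of the product $g_1(\tau)\theta(\tau)$; that the second equals $-\tfrac14\,g_1(\tau+\tfrac12,\tfrac12)$ (put $\zeta = -1$ and replace $q$ by $-q$ in the Fourier series of $g_1$); and that the third equals $\tfrac1{16}\sum_{a \bmod 4}g_1\bigl(\tfrac{\tau+a}{4},\tfrac12\bigr)$ (put $\zeta = -1$, keep the terms whose $q$-exponent is divisible by $4$, and rescale $\tau \mapsto \tau/4$). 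Thus $\Psi = g_1(\tau)\theta(\tau) - \tfrac14 g_1(\tau+\tfrac12,\tfrac12) + \tfrac1{16}\sum_{a \bmod 4}g_1\bigl(\tfrac{\tau+a}{4},\tfrac12\bigr)$.

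The core of the proof is the membership $\Psi \in M_2^!(\SL_2(\Z))$. Set $F \coloneqq g_1(\tau)\theta(\tau) \in M_2^!(\Gamma_0(4))$; the plan is to prove that
\[
	\Psi = \mathrm{Tr}^{\SL_2(\Z)}_{\Gamma_0(4)}(F) \coloneqq \sum_{\gamma \in \Gamma_0(4)\backslash\SL_2(\Z)} F\big|_2\gamma,
\]
the three summands of $\Psi$ being exactly the contributions of the cusps $\infty$ (the identity coset, giving $F$), $1/2$ (a single coset), and $0$ (the four remaining cosets, which collapse after a $U_4$-averaging). Matching the cusp-$1/2$ and cusp-$0$ contributions with the precise combinations in the statement uses the behaviour of $g_1$ and $\theta$ under $S = \smat{0 & -1 \\ 1 & 0}$ and the relevant elements of $\SL_2(\Z)$ — equivalently, the transformation laws \eqref{eq:vect-modular} of the theta components of $g_1$ evaluated at $\tau/4$ — together with the vanishing $\sum_{r \in \Z}\mathbf{t}_1(4m-r^2) = 0$ from \cref{cor:Kaneko-Zagier-rel}, which converts the sign-free output of the $U_4$-computation into the weighted form involving $(-1)^r$ and $(-1)^{n+r}$. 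Granting this, $\mathrm{Tr}^{\SL_2(\Z)}_{\Gamma_0(4)}(F) \in M_2^!(\SL_2(\Z))$ for free; and since $F$ is holomorphic at the cusps $0$ and $1/2$ of $\Gamma_0(4)$ — which follows from the product expansion \eqref{eq:Zagier-half} and the order of vanishing of $\theta$ at $1/2$ — the only pole of the trace is the simple pole of the identity-coset term at $\infty$, with principal part $-q^{-1}$. Hence $\Psi = Dj$. (One can instead bypass the trace and check $\Psi$ directly: invariance under $\tau \mapsto \tau+1$ is clear from its integral $q$-expansion, and invariance under $\tau \mapsto -1/\tau$ follows from \eqref{eq:vect-modular}; this is the same computation.)

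I expect the main obstacle to be precisely this bookkeeping: picking coset representatives for $\Gamma_0(4)\backslash\SL_2(\Z)$, computing the $q$-expansions of $g_1(\tau)\theta(\tau)$ at the cusps $0$ and $1/2$ while carrying the half-integral-weight multiplier system of $g_1$ and the $2$- and $4$-divisions of $\tau$ that appear, and then re-indexing the resulting double sums and applying \cref{cor:Kaneko-Zagier-rel} to reconcile everything with the coefficients $1, -\tfrac14, \tfrac14$ and the arguments $n-r^2$, $4n-r^2$, $16n-r^2$ in the statement. The argument is delicate but routine; it needs nothing beyond \cref{thm:Zagier-gen} and the Jacobi-form formalism recorded above.
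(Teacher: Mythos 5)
Your proposal is correct, but it takes a genuinely different route from the one this paper supplies. The paper does not prove the stated $\mathbf{t}_1$-identity directly: it observes that the identity is equivalent to \cref{thm:Kaneko-formula} via the Hecke relation $\mathbf{t}_2(d) = \mathbf{t}_1(4d) + \left(\frac{-d}{2}\right)\mathbf{t}_1(d) + 2\mathbf{t}_1(d/4)$, and then proves \cref{thm:Kaneko-formula} entirely at level one by specializing the index-two Jacobi form $g_2$ at $z=0$ (\cref{prop:RC-bracket}) and comparing $F_{g_2,0}$ with $2Dj$ in $M_2^!(\SL_2(\Z))$. You instead prove the $\mathbf{t}_1$-statement directly, realizing the right-hand side as the trace from $\Gamma_0(4)$ to $\SL_2(\Z)$ of the weight-two form $g_1\theta$; this is essentially the ``somewhat ad hoc'' original argument that the paper alludes to but deliberately does not carry out. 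The bookkeeping you defer does close: writing $h_0,h_1$ for the theta components of $g_1(\tau,z)$ and $t_\mu(\tau)=\theta_{1,\mu}(\tau,0)$, one computes $(g_1\theta)|_2 S = \tfrac18\, h_0(\tau/4)\,t_0(\tau/4)$, so the four cusp-$0$ cosets contribute $\tfrac12\sum_n\bigl(\sum_{r\,\mathrm{even}}\mathbf{t}_1(16n-r^2)\bigr)q^n$, while the single cusp-$\tfrac12$ coset gives $\tfrac12\sum_n(-1)^n\bigl(\sum_{r\,\mathrm{odd}}\mathbf{t}_1(4n-r^2)\bigr)q^n$; the vanishing $\sum_{r}\mathbf{t}_1(4m-r^2)=0$ from \cref{cor:Kaneko-Zagier-rel} then converts these sign-free sums into exactly your second and third summands with coefficients $-\tfrac{(-1)^{n+r}}{4}$ and $\tfrac{(-1)^r}{4}$, and the only principal part is the $-q^{-1}$ from the identity coset, so $\Psi = Dj$ as you argue. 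Comparing the two routes: yours stays with the $m=1$ data and recovers the historical proof, at the cost of cusp expansions on $\Gamma_0(4)$ with half-integral-weight multipliers and coset bookkeeping; the paper's route avoids all cusp and multiplier computations by remaining at level one with the Jacobi form $g_2$, at the cost of invoking the Hecke-operator relation to pass between $\mathbf{t}_2$ and $\mathbf{t}_1$. Both conclude with the same rigidity step $M_2(\SL_2(\Z)) = \{0\}$.
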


According to Kaneko himself~\cite{Kaneko2001}, ``the theorem was first discovered experimentally. Since the function $g_1(\tau)$ exhibits connections with $E_4(\tau)$, the theta function, and others, I kept computing the coefficients out of curiosity. Through this process, I noticed that the coefficients of $j(\tau)$ might be expressible in terms of $\mathbf{t}_1(d)$, and after some trial and error, I eventually arrived at the formula." Once this observation is made, the result can be proved (albeit in a somewhat ad hoc way) by applying Zagier's modularity theorem (\cref{thm:Zagier-gen}). In addition, as Kaneko~\cite{Kaneko1996} also pointed out, by applying the relation
\[
	\mathbf{t}_2(d) = \mathbf{t}_1(4d) + \left(\frac{-d}{2}\right) \mathbf{t}_1(d) + 2\mathbf{t}_1(d/4),
\]
which is obtained via the action of Hecke operators, one sees that this theorem is equivalent to \cref{thm:Kaneko-formula}. From the perspective of a natural proof, the formulation in term of $\mathbf{t}_2(d)$ is more suitable.

\begin{proof}[Proof of \cref{thm:Kaneko-formula}]
	Applying \cref{prop:RC-bracket}, we have
	\[
		F_{g_2, 0}(\tau) = \sum_{n=-1}^\infty \left(\sum_{r \in \Z} \mathbf{t}_2(4n-r^2) \right) q^n = -2 q^{-1} + O(q) \in M_2^!(\SL_2(\Z)).
	\]
	On the other hand, since $D = q \frac{\dd}{\dd q}$, we observe that 
	\[
		Dj(\tau) = -q^{-1} + \sum_{n=1}^\infty n \mathrm{Coeff}_{q^n}(j) q^n \in M_2^!(\SL_2(\Z)).
	\]
	We therefore conclude that $F_{g_2, 0} - 2Dj \in M_2(\SL_2(\Z)) = \{0\}$, which yields Kaneko's formula.
\end{proof}

\section{Developments after Zagier: A perspective from Kaneko's formula}

Zagier~\cite{Zagier2002} suggested several directions for generalizing \cref{thm:Zagier-gen}, including twisting by characters, replacing $\SL_2(\Z)$ with congruence subgroups, and considering modular forms of nonzero weight. As an early development along these lines, C.~H.~Kim~\cite{Kim2004, Kim2006} gave extensions to Hauptmoduln of higher levels. These ideas were later unified and extended using the theory of theta lifts by Bruinier--Funke~\cite{BruinierFunke2006} and others, which initiated a series of further developments.

\subsection{Higher level analogues of Kaneko's formula}

We first briefly review the results of Bruinier--Funke~\cite{BruinierFunke2006} and then present several higher-level analogues of Kaneko's formula that have arisen from this framework.

We first recall an analogue of \cref{thm:Zagier-gen} for the Kronecker--Hurwitz class numbers $\mathbf{t}_0(d)$, established by Zagier~\cite{Zagier1975} in 1975.

\begin{theorem}[Zagier~\cite{Zagier1975}]\label{thm:Zagier-KH-3/2}
	The generating series
	\[
		\widehat{g}_0(\tau) \coloneqq -\frac{1}{12} + \sum_{0 < d \equiv 0, 3\ (4)} \mathbf{t}_0(d) q^d + \frac{1}{8\pi \sqrt{v}} + \frac{1}{4\sqrt{\pi}} \sum_{n=1}^\infty n \Gamma \left(-\frac{1}{2}; 4\pi n^2 v\right) q^{-n^2}
	\]
	is a harmonic Maass form of weight $3/2$ on $\Gamma_0(4)$, where we set $\tau = u + iv$ with $u, v \in \R$, and 
	\[
		\Gamma(s; x) \coloneqq \int_x^\infty e^{-t} t^{s-1} \dd t
	\]
	denotes the incomplete Gamma function.
\end{theorem}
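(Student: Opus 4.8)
The plan is to realize $\widehat{g}_0$ as the (regularized) theta lift of a harmonic Maass form on $\mathrm{SL}_2(\mathbb{Z})$, following the same mechanism that produces Zagier's weight $3/2$ generating function but now allowing the holomorphic part of the input to be a genuine harmonic Maass form rather than a weakly holomorphic modular form. Concretely, the Kronecker--Hurwitz class numbers $\mathbf{t}_0(d)$ are the traces of the constant function $j_0 \equiv 1$, and the constant function extends to a harmonic (indeed weight $0$) object; under the Kudla--Millson or Shintani-type theta kernel on the symmetric space of signature $(2,1)$ the value $\mathbf{t}_0(d)$ appears as the $d$-th cycle-integral coefficient, while the archimedean term $\tfrac{1}{8\pi\sqrt v}$ and the incomplete-Gamma tail $q^{-n^2}$ arise respectively from the contribution at the cusp and from the isotropic (reducible) quadratic forms of square discriminant $-n^2$. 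So the first step is to set up the appropriate theta kernel $\Theta(\tau, z)$ of weight $3/2$ in $\tau$, pair it against $1$ over a truncated fundamental domain, and identify the holomorphic part of the resulting Fourier expansion with $-\tfrac1{12} + \sum \mathbf{t}_0(d)q^d$.

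The key steps, in order, are: (i) recall from Bruinier--Funke that the regularized theta integral $\Phi(\tau) = \int_{\mathrm{SL}_2(\mathbb{Z})\backslash\mathbb{H}}^{\mathrm{reg}} f(z)\,\Theta(\tau,z)\,\tfrac{dx\,dy}{y^2}$ transforms as a form of weight $3/2$ on $\Gamma_0(4)$ whenever $f$ is weight $0$ and harmonic, with the modularity in $\tau$ inherited from that of the theta kernel (this is exactly the structural input that \cref{thm:Zagier-gen} is the weakly-holomorphic shadow of); (ii) take $f = 1$ and compute the Fourier expansion of $\Phi$ term by term, separating the contribution of positive-discriminant forms (giving $\mathbf{t}_0(d)q^d$ for $d>0$), the $d=0$ constant $-\tfrac1{12}$ (from $H(0)$, or equivalently from the Eisenstein-type regularization at the cusp), and the negative square discriminants $-n^2$; (iii) show that the $q^{-n^2}$ coefficients carry the non-holomorphic factor $n\,\Gamma(-\tfrac12; 4\pi n^2 v)$ and that the $\tfrac{1}{8\pi\sqrt v}$ term is the constant-term remainder of the regularization — these are precisely the pieces that make $\widehat{g}_0$ harmonic but not holomorphic; (iv) verify the harmonicity, i.e. $\xi_{3/2}\widehat{g}_0$ is a weight $1/2$ holomorphic form (it will be a multiple of Zagier's Eisenstein series / the Jacobi theta function), which follows from the general fact that $\xi$ applied to a theta lift is again a theta lift of a lower-weight object.

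An alternative, more elementary route — closer to Zagier's own 1975 argument and to the style already used in the excerpt for \cref{cor:Kaneko-Zagier-rel} — is to bypass theta lifts entirely: one writes down the candidate $\widehat{g}_0$, uses the Hurwitz--Kronecker class number relations (\cref{thm:Hurwitz} and its companions $\sum_r \mathbf{t}_0(4n - r^2)$ weighted by $r^2$, etc.) to pin down finitely many obstructions, and checks directly that the holomorphic projection of $\widehat{g}_0$ against the relevant Eisenstein series vanishes, forcing modularity. In this approach the completion terms are chosen precisely so that $\widehat{g}_0$ lies in the kernel of the relevant "vanishing of non-holomorphic period" criterion; one then invokes the standard principle that a real-analytic function on $\mathbb{H}$ with the right growth, the right $\Gamma_0(4)$-covariance weight $3/2$, and $\xi_{3/2}$-image a holomorphic cusp-or-Eisenstein form, must be a harmonic Maass form.

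The main obstacle will be step (iii): correctly tracking the regularization at the cusp and the isotropic quadratic forms so that the non-holomorphic pieces come out with exactly the stated normalizations $\tfrac{1}{8\pi\sqrt v}$ and $\tfrac{1}{4\sqrt\pi}\,n\,\Gamma(-\tfrac12;4\pi n^2 v)$. These constants are delicate — they depend on the precise choice of theta kernel, on how the $y\to\infty$ divergence is subtracted, and on a Gaussian-integral computation producing the incomplete Gamma function — and getting the shadow (the image under $\xi_{3/2}$) to match the expected weight $1/2$ theta series is the real content; the modularity in $\tau$, by contrast, is essentially automatic once the theta kernel is in place. Since the excerpt only needs to \emph{state} this theorem as background before building on it, I would present it with a reference to Zagier~\cite{Zagier1975} and Bruinier--Funke~\cite{BruinierFunke2006} and sketch the theta-lift derivation rather than reproduce the constant-chasing in full.
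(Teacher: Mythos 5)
The paper does not prove this theorem at all: it is quoted as background with a citation to Zagier's 1975 paper, and your closing instinct --- state it, cite Zagier and Bruinier--Funke, and move on --- is exactly what the author does. So there is no ``paper proof'' to match against; what can be assessed is whether your sketch is a viable route to the result. Your primary route (realize $\widehat{g}_0$ as the regularized Kudla--Millson/Shintani theta lift of the constant function $1$, with the positive-discriminant forms giving $\mathbf{t}_0(d)q^d$, the square discriminants $-d=n^2$ giving the incomplete-Gamma terms via the infinite geodesics that force regularization, and the cusp contribution giving $\tfrac{1}{8\pi\sqrt v}$) is sound and is precisely the modern derivation in Bruinier--Funke, which the surrounding text of the paper explicitly describes as sending ``the constant function $1$'' to $\widehat{g}_0(\tau)$. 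It is worth noting, though, that this is \emph{not} Zagier's 1975 argument: his original proof constructs the weight $3/2$ Eisenstein series directly by Hecke's trick, i.e.\ analytically continuing $\sum_{\gamma} \psi(v,s)\,|_{3/2}\,\gamma$ in an auxiliary variable $s$ and evaluating at the special point, then computing the Fourier expansion to identify the holomorphic coefficients with $H(d)$. That route avoids theta kernels entirely and is where the specific constants $\tfrac{1}{8\pi\sqrt v}$ and $\tfrac{1}{4\sqrt\pi}\,n\,\Gamma(-\tfrac12;4\pi n^2 v)$ actually come from in the literature.

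One genuine weakness: your ``alternative, more elementary route'' is not logically coherent as stated. Holomorphic projection and the class-number relations of \cref{thm:Hurwitz} are \emph{consequences} of the modularity of $\widehat{g}_0$ (this is exactly how the paper derives the Eichler--Selberg relations from \cref{thm:Zagier-KH-3/2} later on); you cannot run that implication backwards to ``force modularity'' of a candidate series, and there is no standard ``vanishing of non-holomorphic period'' criterion that upgrades a real-analytic function with the right growth and right shadow to a $\Gamma_0(4)$-modular form without first establishing covariance under the group. If you want a proof rather than a citation, commit to either the Eisenstein-series continuation (Zagier) or the regularized theta lift (Bruinier--Funke); both work, and the second has the advantage of uniformly explaining the ad hoc values $\mathbf{t}_0(0)=-\tfrac1{12}$ and the $q^{-n^2}$ completion terms, which is the point the survey is making.
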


Bruinier--Funke unifies the two theorems of Zagier and developed powerful techniques applicable to modular forms of weight $0$ for more general $\Gamma$, beyond $\SL_2(\Z)$. In particular, their framework provides geometric interpretations for the values of $\mathbf{t}_m(d)$ with $d \le 0$, which were previously defined in an ad hoc manner. Roughly speaking, they constructed a map (theta lift) sending weight 0 modular forms to modular forms of weight $3/2$, and showed that, as specific examples, the constant function $1$ maps to $\widehat{g}_0(\tau)$, while the modular function $j_m(\tau)$ maps to a form corresponding to $g_m(\tau, z)$.

Their main theorem~\cite[Theorem 4.5]{BruinierFunke2006} is not formulated in terms of Jacobi forms, but rather in the language of vector-valued modular forms associated with the Weil representation, (see also~\cite{Bruinier2002}). Here, we do not go into the full details, but instead present only the special case in terms of Jacobi forms.

Let $p$ be a prime number. For a negative discriminant $-d \equiv 0, 1 \pmod{4}$ and an integer $h \pmod{2p}$ with $h^2 \equiv -d \pmod{4p}$, we define the subsets of $\mathcal{Q}_d$ by
\begin{align*}
	\mathcal{Q}_{d,p} &\coloneqq \{[a, b, c] \in \mathcal{Q}_d : a \equiv 0 \pmod{p}\},\\
	\mathcal{Q}_{d,p,h} &\coloneqq \{[a, b, c] \in \mathcal{Q}_{d,p} : b \equiv h \pmod{2p}\}.
\end{align*}
Let $\Gamma_0^*(p) \subset \SL_2(\R)$ denote the \emph{Fricke group}, generated by $\Gamma_0(p)$ together with the Fricke involution $W_p = \smat{0 & -1/\sqrt{p} \\ \sqrt{p} & 0} \in \SL_2(\R)$. Then $\Gamma_0(p)$ acts on $\mathcal{Q}_{d,p,h}$ while the Fricke group $\Gamma_0^*(p)$ acts on $\mathcal{Q}_{d,p}$. For a weight $0$ weakly holomorphic modular form $f \in M_0^!(\Gamma_0^*(p))$, we define two types of modular traces:
\begin{align*}
	\mathbf{t}_f(d) &\coloneqq \sum_{Q \in \mathcal{Q}_{d,p,h}/\Gamma_0(p)} \frac{1}{|\overline{\Gamma_0(p)}_Q|} f(\alpha_Q),\\
	\mathbf{t}_f^*(d) &\coloneqq \sum_{Q \in \mathcal{Q}_{d,p}/\Gamma_0^*(p)} \frac{1}{|\overline{\Gamma_0^*(p)}_Q|} f(\alpha_Q),
\end{align*}
where $\overline{\Gamma} = \Gamma/\{\pm I\}$. While the definition of $\mathbf{t}_f(d)$ appears to depend on the choice of representatives $h \pmod{2p}$, the value is in fact independent of this choice, thanks to the invariance of $f$ under $\Gamma_0^*(p)$. More precisely, we have the following.

\begin{lemma}\label{lem:tf-f*}
	For $f \in M_0^!(\Gamma_0^*(p))$, we have $\mathbf{t}_f(d) = 2^{\omega(\gcd(p, d))} \mathbf{t}_f^*(d)$.
\end{lemma}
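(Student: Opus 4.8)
The plan is to compare the two trace sums by tracking how the $\Gamma_0^*(p)$-orbits on $\mathcal{Q}_{d,p}$ decompose into $\Gamma_0(p)$-orbits on the subsets $\mathcal{Q}_{d,p,h}$ as $h$ ranges over residues mod $2p$ with $h^2\equiv -d \pmod{4p}$. First I would record that $\mathcal{Q}_{d,p}=\bigsqcup_h \mathcal{Q}_{d,p,h}$, the disjoint union over the set $R_d\coloneqq\{h \bmod 2p : h^2\equiv -d\ (4p)\}$, whose cardinality is $2^{\omega(\gcd(p,d))}$ (this is the same elementary count appearing in \cref{ex:m=p}, via the Kronecker symbol: if $p\nmid d$ there are $1+\left(\frac{-d}{p}\right)$ such classes, which is $0$ or $2$, and if $p\mid d$ there is exactly one). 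The key structural fact is that the Fricke involution $W_p$ permutes the sets $\{\mathcal{Q}_{d,p,h}\}_{h\in R_d}$: conjugating a form $[a,b,c]$ with $p\mid a$ by $W_p$ sends it to $[c p, -b, a/p]$ up to normalization, which again lies in $\mathcal{Q}_{d,p}$ and has middle coefficient $\equiv -h$; more generally $\Gamma_0^*(p)/\Gamma_0(p)$ (a group of order $2$ mod $\pm I$) acts on the index set $R_d$. Since $f$ is $\Gamma_0^*(p)$-invariant, $f(\alpha_Q)$ is constant on $\Gamma_0^*(p)$-orbits, and in particular $\mathbf{t}_f(d)$ computed using $\mathcal{Q}_{d,p,h}$ does not depend on the choice of $h\in R_d$ — this is the independence claim stated before the lemma, and it is the first thing to nail down cleanly.

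Next I would set up the orbit bookkeeping. Fix a set of representatives $Q_1,\dots,Q_s$ for $\mathcal{Q}_{d,p}/\Gamma_0^*(p)$. For each $i$, the $\Gamma_0^*(p)$-orbit of $Q_i$ breaks up into $\Gamma_0(p)$-orbits, and by the orbit-stabilizer relation the number of $\Gamma_0(p)$-orbits inside it, counted with the weights $1/|\overline{\Gamma_0(p)}_Q|$, satisfies
\[
	\sum_{Q\in (\Gamma_0^*(p)Q_i)/\Gamma_0(p)} \frac{1}{|\overline{\Gamma_0(p)}_Q|} = [\overline{\Gamma_0^*(p)}:\overline{\Gamma_0(p)}]\cdot \frac{1}{|\overline{\Gamma_0^*(p)}_{Q_i}|} = \frac{2}{|\overline{\Gamma_0^*(p)}_{Q_i}|}.
\]
On the other hand, $\mathbf{t}_f(d)$ as defined only sees those $\Gamma_0(p)$-orbits contained in one fixed $\mathcal{Q}_{d,p,h_0}$. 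So I would compare: the full sum $\sum_{Q\in \mathcal{Q}_{d,p}/\Gamma_0(p)} \frac{1}{|\overline{\Gamma_0(p)}_Q|} f(\alpha_Q)$ equals $\sum_i \frac{2}{|\overline{\Gamma_0^*(p)}_{Q_i}|} f(\alpha_{Q_i}) = 2\,\mathbf{t}_f^*(d)$ by the displayed identity together with $\Gamma_0^*(p)$-invariance of $f$. But this full sum also equals $\sum_{h\in R_d} \mathbf{t}_{f,h}(d)$ where $\mathbf{t}_{f,h}(d)$ denotes the sum over $\mathcal{Q}_{d,p,h}/\Gamma_0(p)$; and since each $\mathbf{t}_{f,h}(d)$ equals the common value $\mathbf{t}_f(d)$ by the independence established above, we get $|R_d|\cdot \mathbf{t}_f(d) = 2\,\mathbf{t}_f^*(d)$. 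Combining with $|R_d| = 2^{\omega(\gcd(p,d))}$ and rearranging gives $\mathbf{t}_f(d) = 2^{1-\omega(\gcd(p,d))}\cdot\!\big(2^{\,\omega(\gcd(p,d))-1}\cdot 2\big)^{-1}\!$ — more directly, $\mathbf{t}_f(d) = \frac{2}{|R_d|}\mathbf{t}_f^*(d) = 2^{1-\omega(\gcd(p,d))}\mathbf{t}_f^*(d)$. Wait: that gives a factor $2^{1-\omega}$, not $2^{\omega}$; so I would recheck the index $[\overline{\Gamma_0^*(p)}:\overline{\Gamma_0(p)}]$ versus how many of the $h$-sheets a single $\Gamma_0^*(p)$-orbit meets — the resolution is that $W_p$ typically maps $\mathcal{Q}_{d,p,h}$ to $\mathcal{Q}_{d,p,-h}$, so when $R_d$ has two elements $\{h_0,-h_0\}$ a generic $\Gamma_0^*(p)$-orbit meets \emph{both} sheets (making the count come out to $\mathbf{t}_f = \mathbf{t}_f^*$ when $|R_d|=2$, i.e. $2^{\omega}=2$, consistent), and when $p\mid d$ one has $h_0\equiv -h_0$, a single sheet, $[\,\cdot:\cdot\,]=2$ collapses onto stabilizers, giving $\mathbf{t}_f = 2\mathbf{t}_f^*$ — exactly $2^{\omega(\gcd(p,d))}$ in both regimes. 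So the clean statement to prove is: a $\Gamma_0^*(p)$-orbit in $\mathcal{Q}_{d,p}$ meets the sheet $\mathcal{Q}_{d,p,h_0}$ in either one $\Gamma_0(p)$-orbit (when $W_p$ swaps $h_0\leftrightarrow -h_0$ nontrivially on that orbit, using up the index-$2$) or with the weight-$2$ multiplicity (when $W_p$ fixes the sheet).

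The main obstacle I anticipate is precisely this last point — correctly accounting for the interaction between the index $[\overline{\Gamma_0^*(p)}:\overline{\Gamma_0(p)}]=2$, the action of $W_p$ on the index set $R_d$ of $h$-sheets, and the stabilizers $\overline{\Gamma_0^*(p)}_{Q}$ (which can be strictly larger than $\overline{\Gamma_0(p)}_Q$ when $Q$ is fixed by an element of $\Gamma_0^*(p)\setminus\Gamma_0(p)$, e.g. $W_p$-fixed forms). A careful case analysis according to whether $p\mid d$ (so $-d\equiv 0\bmod p$, $R_d$ a singleton) or $p\nmid d$ with $\left(\frac{-d}{p}\right)=1$ (so $R_d=\{\pm h_0\}$ with $h_0\not\equiv -h_0$), handling $W_p$-fixed orbits separately, will settle it; the case $\left(\frac{-d}{p}\right)=-1$ is vacuous since both $\mathcal{Q}_{d,p,h}$ and the relevant points are empty, and both sides vanish. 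The computation that $W_p\cdot[a,b,c] = [c p, -b, a/p]$ (suitably normalized to again have first coefficient divisible by $p$) and that this is compatible with the $\Gamma_0(p)$-action is a short matrix manipulation I would include to justify that $W_p$ indeed maps $\mathcal{Q}_{d,p,h}$ into $\mathcal{Q}_{d,p,-h}$.
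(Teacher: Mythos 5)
Your overall strategy is sound and, after the detour, lands on essentially the same case analysis as the paper (the paper directly verifies: for $p \nmid d$ the map $\mathcal{Q}_{d,p,h}/\Gamma_0(p) \to \mathcal{Q}_{d,p}/\Gamma_0^*(p)$ is a bijection, and for $p \mid d$ each $\Gamma_0^*(p)$-class is hit twice, either by two distinct $\Gamma_0(p)$-classes $Q$, $Q\circ W_p$ or via a doubled stabilizer). Your global double-counting is a clean alternative packaging: the identity $\sum_{Q\in(\Gamma_0^*(p)Q_i)/\Gamma_0(p)} 1/|\overline{\Gamma_0(p)}_Q| = 2/|\overline{\Gamma_0^*(p)}_{Q_i}|$ absorbs the two subcases of the paper's $p\mid d$ analysis automatically, which is a genuine (if small) simplification.

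However, the writeup contains a concrete numerical error that you then ``resolve'' with a further misstatement. The number of classes $h \bmod 2p$ with $h^2 \equiv -d \pmod{4p}$ is $1 + \left(\frac{-d}{p}\right)$, which (when nonzero) equals $2^{1-\omega(\gcd(p,d))}$, \emph{not} $2^{\omega(\gcd(p,d))}$ as you first assert --- your own parenthetical (two classes when $p\nmid d$, one when $p\mid d$) already contradicts the displayed claim, and this is exactly the exponent appearing in \cref{ex:m=p}. With the correct count, your double-count identity $|R_d|\,\mathbf{t}_f(d) = 2\,\mathbf{t}_f^*(d)$ yields $\mathbf{t}_f(d) = 2^{\omega(\gcd(p,d))}\mathbf{t}_f^*(d)$ immediately, and the entire ``Wait: that gives $2^{1-\omega}$'' paragraph becomes unnecessary. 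As written, that paragraph also asserts ``when $|R_d|=2$, i.e.\ $2^\omega = 2$,'' which is false: $|R_d|=2$ forces $p\nmid d$, hence $2^{\omega(\gcd(p,d))}=1$, consistent with the conclusion $\mathbf{t}_f = \mathbf{t}_f^*$ you state in the same sentence. The per-case conclusions you reach ($\mathbf{t}_f=\mathbf{t}_f^*$ for $p\nmid d$, $\mathbf{t}_f=2\mathbf{t}_f^*$ for $p\mid d$) are correct and match the paper; fix the count of $|R_d|$ and delete the self-correction, and the argument is complete.
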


\begin{proof}
	For each $h$ satisfying $h^2 \equiv -d \pmod{4p}$, we see that $p \mid h$ if and only if $p \mid d$. We note that $[a,b,c] \circ W_p = [pc, -b, a/p]$. We then verify the following:
	\begin{enumerate}
		\item If $p \nmid d$, then the map $\mathcal{Q}_{d,p,h}/\Gamma_0(p) \ni [a,b,c] \mapsto [a,b,c] \in \mathcal{Q}_{d,p}/\Gamma_0^*(p)$ is bijective. Thus, we have $\mathbf{t}_f(d) = \mathbf{t}_f^*(d)$.
		\item If $p \mid d$, for each $Q = [a,b,c] \in \mathcal{Q}_{d,p,h}$, there are two possibilities.
		\begin{itemize}
		 	\item If $Q \neq Q \circ W_p$ in $\mathcal{Q}_{d,p,h}/\Gamma_0(p)$, then the map $\mathcal{Q}_{d,p,h}/\Gamma_0(p) \ni Q, Q \circ W_p \mapsto Q \in \mathcal{Q}_{d,p}/\Gamma_0^*(p)$ is 2-1 correspondence.
		\item If $Q = Q \circ W_p$ in $\mathcal{Q}_{d,p,h}/\Gamma_0(p)$, then we have $|\overline{\Gamma_0^*(p)}_Q| = 2|\overline{\Gamma_0(p)}_Q|$.
		\end{itemize}
		Thus, we have $\mathbf{t}_f(d) = 2 \mathbf{t}_f^*(d)$.
	\end{enumerate}
	This concludes the proof.
\end{proof}

In this setting, the following was shown by Bruinier--Funke.

\begin{theorem}[Bruinier--Funke~\cite{BruinierFunke2006}, see also C.~H.~Kim~\cite{Kim2007}]
	For $f(\tau) = \sum_{n \gg -\infty} a_n q^n \in M_0^!(\Gamma_0^*(p))$ with $a_0 = 0$, we put
	\[
		\mathbf{t}_f(0) = 2 \sum_{n=1}^\infty (\sigma_1(n) + p \sigma_1(n/p)) a_{-n},
	\]
	and for negative $d$,
	\[
		\mathbf{t}_f(d) = \begin{cases}
			- 2^{\omega(\gcd(p, \kappa))} \kappa \sum_{\kappa \mid m} a_{-m} &\text{if } d = -\kappa^2 \text{ for some positive integer $\kappa$},\\
			0 &\text{otherwise}.
		\end{cases}
	\]
	Then we have
	\[
		\phi_f(\tau, z) = \sum_{\substack{n \gg -\infty \\ r \in \Z}} \mathbf{t}_f(4pn-r^2) q^n \zeta^r \in J_{2,p}^!.
	\]
\end{theorem}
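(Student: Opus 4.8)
The plan is to realise $\phi_f$ as the Jacobi-form incarnation of the Bruinier--Funke theta lift~\cite{BruinierFunke2006} of $f$, and to make the passage to Jacobi forms explicit. Writing $c(n,r) = \mathbf{t}_f(4pn-r^2)$, one first checks --- exactly as in \cref{ex:m=p} --- that $c(n,r)$ depends only on $4pn-r^2$; here one uses that $\mathbf{t}_f(N)$, as defined before \cref{lem:tf-f*}, depends only on the discriminant $N$ and not on the auxiliary class $h \bmod 2p$. Hence the theta components \eqref{eq:theta-decomp} of the putative Jacobi form are the single-variable series $h_\mu(\tau) = \sum_{N \equiv -\mu^2\ (4p)} \mathbf{t}_f(N)\, q^{N/4p}$ for $\mu \in \Z/2p\Z$, and by the converse part of the correspondence recalled right after \eqref{eq:vect-modular} it suffices to prove that $(h_\mu)_{\mu \in \Z/2p\Z}$ is a weakly holomorphic vector-valued modular form of weight $3/2$ for the Weil representation dual to that of the rank-one lattice $(\Z,\, x \mapsto p x^2)$ --- equivalently, for the Weil representation $\rho_L$ of an even lattice $L$ of signature $(1,2)$ with discriminant form $(\Z/2p\Z,\, x \mapsto -x^2/4p)$.

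Such a form is produced by the theta lift. I would take $L$ to be the even lattice of signature $(1,2)$ attached to $\Gamma_0(p)$ in the usual way, sitting inside a rational quadratic space $V$ with $\PSL_2 \cong \mathrm{SO}^+(V)$ and $\bbH$ its symmetric space, normalised so that lattice vectors of the relevant norm correspond to the forms in $\mathcal{Q}_{d,p}$ and $\alpha_Q \in \bbH$ is the point fixed by $Q$; then $\Gamma_0(p)$ preserves $L$, and the Fricke involution $W_p$, which acts through an element of $\mathrm{O}(L)$ outside the image of $\Gamma_0(p)$, extends this to an action of $\Gamma_0^*(p)$. Pairing $f$ against the Kudla--Millson theta kernel $\Theta_L(\tau,z)$ --- modular of weight $3/2$ and the relevant Weil-representation type in $\tau$, and $\Gamma_0^*(p)$-invariant in $z$ --- and regularising the resulting integral over $\Gamma_0(p)\backslash\bbH$ in the sense of Borcherds yields a weakly holomorphic vector-valued form $\Lambda(f)$ of the required weight and type. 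Unfolding the integral --- or, equivalently, writing $f$ as a finite combination of Niebur--Poincar\'e series for $\Gamma_0^*(p)$ and computing coefficient by coefficient --- identifies, for a positive discriminant $D$, the $D$-th Fourier coefficient of $\Lambda(f)$ with the CM trace $\mathbf{t}_f^*(D)$, which by \cref{lem:tf-f*} is $2^{-\omega(\gcd(p,D))}\mathbf{t}_f(D)$; the extra factor is absorbed by the normalisation in the vector-valued-to-Jacobi passage. The hypothesis $a_0 = 0$ enters precisely here: it forces the non-holomorphic part of $\Lambda(f)$ --- which for the constant input $1$ is exactly the Eichler-integral tail of $\widehat{g}_0$ in \cref{thm:Zagier-KH-3/2} --- to vanish, so that $\Lambda(f)$, hence $\phi_f$, is genuinely weakly holomorphic.

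I expect the main obstacle to be the evaluation of the coefficients of $\Lambda(f)$ indexed by $D \le 0$, where essentially all of the remaining arithmetic is concentrated and which must be matched against the prescribed quantities. The constant term ($D=0$) collects the contributions of the two cusps $\infty$ and $0$ of $\Gamma_0(p)$, interchanged by $W_p$ and with local parameters differing by the factor $p$; this is what yields $2\sum_{n\ge 1}(\sigma_1(n)+p\,\sigma_1(n/p))\,a_{-n}$. The coefficients at $D=-\kappa^2$ come from the reducible (isotropic) forms in $\mathcal{Q}_{-\kappa^2,p}$, i.e.\ from the boundary of the CM cycle, and the weight $2^{\omega(\gcd(p,\kappa))}\kappa$ records how many such forms lie over each coefficient. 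Besides this, the Weil-representation dictionary must be set up carefully enough that invariance of $f$ under the \emph{full} Fricke group --- including $W_p$ --- corresponds to working with $L$ itself rather than a proper sublattice, so that $\phi_f$ has index exactly $p$. Granting these identifications the theorem follows; a more hands-on alternative --- verifying the $\tau \mapsto \tau+1$ relation of \eqref{eq:vect-modular} directly from the $q$-expansion and the $\tau \mapsto -1/\tau$ relation by exhibiting $\phi_f$, for $f$ a Poincar\'e series, as an explicit Jacobi--Poincar\'e series of weight $2$ and index $p$ --- reduces to the same Shintani-type unfolding, with the $D \le 0$ coefficients still forming the crux.
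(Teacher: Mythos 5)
The paper offers no proof of this theorem: it is quoted as the Jacobi-form translation of Bruinier--Funke's Theorem 4.5, with only a one-sentence description of the underlying theta lift. Your sketch reconstructs exactly that argument --- reduction to a weight-$3/2$ vector-valued form for the Weil representation of a signature-$(1,2)$ lattice attached to $\Gamma_0(p)$, the regularized Kudla--Millson lift, the identification of the positive-discriminant coefficients with the traces (with the $2^{\omega(\gcd(p,d))}$ bookkeeping of \cref{lem:tf-f*}), the role of $a_0=0$ in killing the non-holomorphic Eichler-integral part, and the cusp/split-form origin of the $d\le 0$ coefficients --- so it takes the same route as the cited source rather than a genuinely different one. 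It is a sketch rather than a complete proof (the evaluation of the $d\le 0$ coefficients, which you correctly flag as the crux, is deferred to the unfolding carried out in \cite{BruinierFunke2006}), but the architecture and all the checkpoints are right.
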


By applying the discussion from the previous sections, we obtain higher-level analogues of Kaneko's formula. As an example, we illustrate this by considering the case $p=2$ with the Hauptmodul for $\Gamma_0^*(2)$ defined by
\[
	j_2^*(\tau) = \left(\frac{\eta(\tau)}{\eta(2\tau)}\right)^{24} + 24 + 2^{12} \left(\frac{\eta(2\tau)}{\eta(\tau)}\right)^{24} = q^{-1} + 4372q + 96256 q^2 + \cdots.
\]
We also consider the polynomial $j_{2,m}^* \in \Z[j_2^*]$ whose Fourier expansion begins $j_{2,m}^*(\tau) = q^{-m} + O(q)$. As in~\cref{cor:Kaneko-Zagier-rel}, a recurrence relation characterizing $\mathbf{t}_m^{(2)}(d) \coloneqq \mathbf{t}_{j_{2,m}^*}(d)$ can be derived. (An equivalent form of the recurrence relation was also obtained by C.~H.~Kim~\cite{Kim2008}).

\begin{corollary}[The case of $m=1$]\label{cor:t12-rec}
	Recall that $\mathbf{t}_1^{(2)}(0) = 2$ and $\mathbf{t}_1^{(2)}(-1) = -1$.
	For any integer $n \ge 1$, we have
	\begin{align*}
		\sum_{r \in \Z} \mathbf{t}_1^{(2)}(8n-r^2) &= 0,\\
		\sum_{r \in \Z} (r^2 - 2n) \mathbf{t}_1^{(2)}(8n-r^2) &= -480 \sigma_3(n),\\
		\sum_{r \in \Z} (r^4 - 6nr^2 + 4n^2) \mathbf{t}_1^{(2)}(8n-r^2) &= 1008 \sigma_5(n).
	\end{align*}
\end{corollary}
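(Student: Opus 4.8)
The plan is to repeat, \emph{mutatis mutandis}, the argument used for \cref{cor:Kaneko-Zagier-rel}, now applied to the Jacobi form $\phi \coloneqq \phi_{j_{2,1}^*} \in J_{2,2}^!$ supplied by the Bruinier--Funke theorem above with $p = 2$ and $f = j_{2,1}^*(\tau) = q^{-1} + O(q)$, so that $a_{-1} = 1$ and $a_{-m} = 0$ for all $m \ge 2$. The first step is to apply \cref{prop:RC-bracket} and \cref{prop:RC-expansion} with $k = 2$, $m = 2$ and $\nu = 0, 1, 2$, which produces
\[
	F_{\phi,\nu}(\tau) = \sum_{n \gg -\infty} \left( \sum_{r \in \Z} p_{2,2\nu}\!\left( \tfrac{r}{\sqrt{2}}, n \right) \mathbf{t}_1^{(2)}(8n - r^2) \right) q^n \in M_{2+2\nu}^!(\SL_2(\Z)).
\]
Feeding in the polynomials recorded just after \cref{prop:RC-expansion}, namely $p_{2,0}(x,n) = 1$, $p_{2,2}(x,n) = x^2 - n$ and $p_{2,4}(x,n) = x^4 - 3nx^2 + n^2$, and substituting $x = r/\sqrt{2}$, the three inner sums become respectively $\sum_r \mathbf{t}_1^{(2)}(8n-r^2)$, $\tfrac12 \sum_r (r^2 - 2n)\mathbf{t}_1^{(2)}(8n-r^2)$ and $\tfrac14 \sum_r (r^4 - 6nr^2 + 4n^2)\mathbf{t}_1^{(2)}(8n-r^2)$, i.e.\ the left-hand sides of the three asserted identities up to the harmless factors $\tfrac12$ and $\tfrac14$.

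The second step is to pin down these three weakly holomorphic forms inside the small spaces $M_2(\SL_2(\Z)) = \{0\}$, $M_4(\SL_2(\Z)) = \C E_4$ and $M_6(\SL_2(\Z)) = \C E_6$. For this I first need $\mathbf{t}_1^{(2)}(D)$ for all $D \le 0$: the Bruinier--Funke formula gives $\mathbf{t}_1^{(2)}(0) = 2$ and $\mathbf{t}_1^{(2)}(-1) = -1$ (as already recalled in the statement), and $\mathbf{t}_1^{(2)}(-\kappa^2) = 0$ for every $\kappa \ge 2$, because the sum $\sum_{\kappa \mid m} a_{-m}$ is empty once $\kappa \nmid 1$; hence $\mathbf{t}_1^{(2)}(D) = 0$ for all negative $D \neq -1$. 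Since $8n - r^2 \le -8$ whenever $n \le -1$, it follows that every $F_{\phi,\nu}$ has vanishing principal part and is therefore a genuine holomorphic modular form of level one. Evaluating the constant term from $n = 0$ (where $8n - r^2 = -r^2$, so only $r = 0, \pm 1$ contribute) gives
\[
	\sum_r \mathbf{t}_1^{(2)}(-r^2) = 0, \qquad \tfrac12 \sum_r r^2\,\mathbf{t}_1^{(2)}(-r^2) = -1, \qquad \tfrac14 \sum_r r^4\,\mathbf{t}_1^{(2)}(-r^2) = -\tfrac12.
\]

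Combining the two steps finishes the proof: $F_{\phi,0} = 0$ yields the first identity; the remaining constant terms force $F_{\phi,1} = -E_4 = -1 - 240 \sum_{n \ge 1} \sigma_3(n) q^n$ and $F_{\phi,2} = -\tfrac12 E_6 = -\tfrac12 + 252 \sum_{n \ge 1} \sigma_5(n) q^n$, and comparing $q^n$-coefficients for $n \ge 1$ (then clearing the $\tfrac12$ and $\tfrac14$) gives the second and third identities. I do not anticipate a genuine obstacle here; the only points requiring care are the book-keeping of the $\sqrt{m} = \sqrt{2}$ rescaling inside the Gegenbauer polynomials and of the normalizations $E_4 = 1 + 240\sum \sigma_3(n) q^n$, $E_6 = 1 - 504 \sum \sigma_5(n) q^n$, exactly as in the proof of \cref{cor:Kaneko-Zagier-rel}.
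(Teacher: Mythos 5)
Your proposal is correct and follows exactly the paper's (one-line) proof: apply \cref{prop:RC-bracket} and \cref{prop:RC-expansion} to $\phi_{j_{2,1}^*} \in J_{2,2}^!$ for $\nu = 0,1,2$ and identify the resulting forms inside $M_2 = \{0\}$, $M_4 = \C E_4$, $M_6 = \C E_6$ via their constant terms. All the bookkeeping (the vanishing of the principal part, the $\sqrt{2}$-rescaling producing the factors $\tfrac12$ and $\tfrac14$, and the constants $-1$ and $-\tfrac12$) checks out.
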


\begin{proof}
	This follows from \cref{prop:RC-bracket} and \cref{prop:RC-expansion} for $\nu = 0, 1, 2$.
\end{proof}

This recurrence allows us to compute the first (non-zero) few examples as follows:
\begin{align*}
	\mathbf{t}_1^{(2)}(4) = -52, \quad \mathbf{t}_1^{(2)}(7) = -23, \quad \mathbf{t}_1^{(2)}(8) = 152, \quad \mathbf{t}_1^{(2)}(12) = -496.
\end{align*}
In her master's thesis, Ohta~\cite{Ohta2009} made numerical observations similar to Kaneko's, based on a table of the values $\mathbf{t}_1^{(2)}(d)$, and obtained level $2$ (and other higher level) analogues of \cref{thm:Kaneko-original}. Here, we present a simpler expression corresponding to \cref{thm:Kaneko-formula}, which also refines the proof of results from the master's thesis of Osanai~\cite{MatsusakaOsanai2017}, written in collaboration with the author. Let 
\[
	\mathbf{t}_2^{(2*)}(d) \coloneqq \mathbf{t}_{j_{2,2}^*}^*(d) = 2^{-\omega(\gcd(2,d))} \mathbf{t}_2^{(2)}(d)
\]
by \cref{lem:tf-f*}. As in \cref{cor:t12-rec}, one obtains recurrence relations that allow the computation of $\mathbf{t}_2^{(2)}(d)$, with the initial values:
\[
\begin{array}{c|cccccc}
d & -4 & -1 & 0 & 4 & 7 & 8 \\ \hline
\mathbf{t}_2^{(2)}(d) & -4 & -1 & 10 & 1036 & -8215 & 14360 \\
\mathbf{t}_2^{(2*)}(d) & -2 & -1 & 5 & 518 & -8215 & 7180
\end{array}
\]

\begin{theorem}[\cite{Matsusaka2017}]\label{thm:higher-level}
	Let $j_2(\tau)$ be the Hauptmodul of $\Gamma_0(2)$ defined by
	\[
		j_2(\tau) = \left(\frac{\eta(\tau)}{\eta(2\tau)}\right)^{24} + 24 = q^{-1} + 276q - 2048 q^2 + 11202q^3 + \cdots.
	\]
	Then, for any $n \ge 1$, we have
	\[
		2n \mathrm{Coeff}_{q^n}(j_2) = \sum_{r \in \Z} \mathbf{t}_2^{(2*)}(4n-r^2) + 24 \sigma_1^{(2)}(n),
	\]
	where we define $\sigma_1^{(2)}(n) = \sum_{\substack{d \mid n \\ d \not\equiv 0\ (2)}} d$.
\end{theorem}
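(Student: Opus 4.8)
The plan is to run the argument proving \cref{thm:Kaneko-formula} at level $2$, with the operator $F_{g_2,0}$ replaced by the construction $G_\phi$ of \cref{prop:Gphi} applied to the index-$2$ Jacobi form attached to $j_{2,2}^*$. Concretely, put $\phi \coloneqq \phi_{j_{2,2}^*}$: since $j_{2,2}^*(\tau) = q^{-2} + O(q)$ has vanishing constant term and $2$ is prime, the Bruinier--Funke theorem above gives $\phi \in J_{2,2}^!$ with $\phi(\tau,z) = \sum_{n,r}\mathbf{t}_2^{(2)}(8n-r^2)q^n\zeta^r$, whence $G_\phi \in M_2^!(\Gamma_0(2))$ by \cref{prop:Gphi}. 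Its Fourier expansion follows from \cref{prop:Gphi-Fourier}, \cref{ex:m=p} and \cref{lem:tf-f*}: since $c^*(N) = 2^{1-\omega(\gcd(2,N))}\mathbf{t}_2^{(2)}(N) = 2\,\mathbf{t}_2^{(2*)}(N)$, one obtains
\[
	G_\phi(\tau) = 2\sum_{n \gg -\infty}\Bigl(\,\sum_{r\in\Z}\mathbf{t}_2^{(2*)}(4n-r^2)\Bigr)q^n = -4q^{-1} - 2 + O(q),
\]
the principal part and constant term being read off from the table of initial values of $\mathbf{t}_2^{(2*)}$.

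Next I would bring in the weakly holomorphic form $Dj_2 = -q^{-1} + \sum_{n\ge1} n\,\mathrm{Coeff}_{q^n}(j_2)\,q^n \in M_2^!(\Gamma_0(2))$ (its constant term vanishes because $j_2$ has none) and the Eisenstein series $P(\tau)\coloneqq 1 + 24\sum_{n\ge1}\sigma_1^{(2)}(n)q^n = 2E_2(2\tau)-E_2(\tau)$, which spans the one-dimensional space $M_2(\Gamma_0(2))$. Set $E\coloneqq 2Dj_2 - \tfrac12 G_\phi \in M_2^!(\Gamma_0(2))$. Comparing the expansions above, the $q^{-1}$-terms cancel and the constant term of $E$ equals $1$, so $E = 1 + O(q)$ is holomorphic at the cusp $\infty$. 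Hence, once $E$ is shown to be holomorphic at the cusp $0$ as well, $E\in M_2(\Gamma_0(2)) = \C P$, and comparison of constant terms forces $E = P$; equating $q^n$-coefficients for $n\ge 1$ in the resulting identity $2Dj_2 = \tfrac12 G_\phi + P$ then yields exactly the asserted formula. (This is where level $2$ genuinely differs from level $1$: for $\SL_2(\Z)$ there is a single cusp and $M_2(\SL_2(\Z))=\{0\}$, so no analogue of the argument below is needed.)

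The crux is therefore the holomorphy of $E|_2 W_2$, where $W_2 = \smat{0 & -1/\sqrt2 \\ \sqrt2 & 0}$. For $P$ this is automatic; for $Dj_2$ it follows from the eta-quotient identity $j_2|_0 W_2 = 24 + 2^{12}(\eta(2\tau)/\eta(\tau))^{24} = 24 + O(q)$, so that $(Dj_2)|_2 W_2 = D(j_2|_0 W_2) = O(q)$. For $G_\phi$ I would use the theta decomposition \eqref{eq:theta-decomp}, $\phi = \sum_{\mu\in\Z/4\Z} h_\mu\theta_{2,\mu}$ with $h_1 = h_3$ (the coefficients depend only on $8n-r^2$), to write, as in the proof of \cref{prop:Gphi-Fourier}, $G_\phi(\tau) = (h_0+h_2)(2\tau)\sum_{t\in\Z}q^{t^2} + 2h_1(2\tau)\sum_{t\in\Z}q^{(2t+1)^2/4}$, and then apply the transformation law \eqref{eq:vect-modular} to the $h_\mu$ together with the classical inversion formulas for the unary theta series. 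The metaplectic factors then combine into $\tfrac12$, the off-diagonal terms collapse, and one is left with
\[
	(G_\phi|_2 W_2)(\tau) = h_0(\tau)\sum_{t\in\Z}q^{2t^2} + h_2(\tau)\sum_{t\in\Z}q^{(2t+1)^2/2}.
\]
Here $h_0 = \mathbf{t}_2^{(2)}(0) + O(q) = 10 + O(q)$ is holomorphic at $\infty$, and although $h_2 = \mathbf{t}_2^{(2)}(-4)q^{-1/2} + O(q^{1/2}) = -4q^{-1/2} + O(q^{1/2})$ has a pole, it is multiplied by $\sum_{t\in\Z}q^{(2t+1)^2/2} = 2q^{1/2} + O(q^{9/2})$, so the product equals $-8 + O(q)$; thus $G_\phi|_2 W_2 = 2 + O(q)$ and $G_\phi$ is holomorphic at the cusp $0$. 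I expect this last computation to be the main obstacle: one has to carry the half-integral $q$-powers produced both by the metaplectic factors $\sqrt{\tau/i}$ and by the unary thetas and check that everything recombines into integral powers, the mechanism being that the only pole among the $h_\mu$, carried by $h_2$ (governed by $\mathbf{t}_2^{(2)}(-4)$), is multiplied by a theta series starting with $q^{1/2}$, while the contribution of $h_1$ (carrying the pole governed by $\mathbf{t}_2^{(2)}(-1)$) drops out of $G_\phi|_2 W_2$ altogether.
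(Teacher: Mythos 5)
Your proposal is correct and follows essentially the same route as the paper: apply \cref{prop:Gphi}, \cref{prop:Gphi-Fourier}, \cref{ex:m=p} and \cref{lem:tf-f*} to $\phi_{j_{2,2}^*}$, check regularity at the cusp $0$, and compare with $Dj_2$ inside $M_2^!(\Gamma_0(2))$ modulo the one-dimensional space $M_2(\Gamma_0(2)) = \C E_2^{(2)}$. The only (harmless) difference is the cusp-$0$ step: you verify it via the theta decomposition and the inversion of the $h_\mu$ and unary theta series, whereas the paper obtains the same expansion more directly from the Jacobi transformation law, namely $\tau^{-2}G_{\phi}(-1/\tau) = \tfrac14\bigl(\phi(\tau/2,0) + \phi(\tau/2,1/2)\bigr)$, which coincides with your displayed formula for $G_\phi|_2 W_2$ after rescaling.
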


\begin{proof}
	While the proof is essentially the same as in~\cite{Matsusaka2017}, a slight shortcut has been included. By applying \cref{prop:Gphi} and \cref{prop:Gphi-Fourier} with \cref{ex:m=p} to $\phi = \phi_{j_{2,2}^*}$, we have
	\[
		G_{\phi_{j_{2,2}^*}}(\tau) = 2 \sum_{n=-1}^\infty \left(\sum_{r \in \Z} \mathbf{t}_2^{(2*)}(4n-r^2) \right) q^n = -4q^{-1} - 2 + O(q)  \in M_2^!(\Gamma_0(2)).
	\]
	Moreover, since
	\begin{align*}
		\tau^{-2} G_{\phi_{j_{2,2}^*}}\left(-\frac{1}{\tau}\right) &= \frac{1}{4} \left(\phi_{j_{2,2}^*} \left(\frac{\tau}{2}, 0 \right) + \phi_{j_{2,2}^*} \left(\frac{\tau}{2}, \frac{1}{2}\right) \right)\\
		&= \frac{1}{4} \sum_{n=0}^\infty \sum_{r \in \Z} (1+(-1)^r) \mathbf{t}_2^{(2)}(8n-r^2) q^{n/2},
	\end{align*}
	we see that $G_{\phi_{j_{2,2}^*}}(\tau)$ has no pole at the cusp $0$. Therefore, $G_{\phi_{j_{2,2}^*}}(\tau) - 4Dj_2(\tau) \in M_2(\Gamma_0(2))$, which is generated by the Eisenstein series
	\[
		E_2^{(2)}(\tau) \coloneqq 1 + 24 \sum_{n=1}^\infty \sigma_1^{(2)}(n) q^n.
	\]
	Thus, we obtain $G_{\phi_{j_{2,2}^*}}(\tau) = 4Dj_2(\tau) - 2E_2^{(2)}(\tau)$, which concludes the proof.
\end{proof}

In the author's earlier work~\cite{MatsusakaOsanai2017, Matsusaka2017}, similar techniques were employed to obtain analogues of Kaneko's formula in several genus zero cases of $\Gamma_0(N)$.

We also highlight several subsequent developments that build on the foundational work of Bruinier--Funke and are motivated by directions suggested by Zagier. The theta lift involving twists was constructed by Alfes--Ehlen~\cite{AlfesEhlen2013} and later extended to modular forms (and harmonic Maass forms) of nonzero weight by Alfes--Schwagenscheidt~\cite{AlfesSchwagenscheidt2018, AlfesSchwagenscheidt2021}. In parallel with this line of work, more direct approaches that do not rely on the language of theta lifts have also been developed. A notable example is the work of Duke--Imamo\={g}lu--T\'{o}th~\cite{DukeImamogluToth2011}, who not only treated singular moduli but also, importantly, extended their methods to cycle integrals. (This result can also be interpreted in terms of theta lifts~\cite{BruinierFunkeImamoglu2015}). The cycle integrals are closely related to Kaneko's other important discovery concerning the $j$-function, namely the val-function~\cite{Kaneko2009}. Building on their methods, the author extended Zagier's result to polyharmonic Maass forms in the case of $\SL_2(\Z)$, as presented in~\cite{Matsusaka2019, Matsusaka2019-phd}.

\subsection{Eichler--Selberg relations}

In analogy with how Kaneko's formula (\cref{thm:Kaneko-formula}) follows from Zagier's result (\cref{thm:Zagier-gen}), one may ask whether the Hurwitz relation (\cref{thm:Hurwitz}) can similarly be derived from another result of Zagier (\cref{thm:Zagier-KH-3/2}) concerning the Kronecker--Hurwitz class numbers. The answer is yes: such a derivation was given by Zagier himself~\cite{Zagier1991}, and a more general treatment was later developed by Mertens~\cite{Mertens2014}. Since the generating function $\widehat{g}_0(\tau)$ in \cref{thm:Zagier-KH-3/2} is not a holomorphic modular form, suitable modifications are necessary. Nevertheless, by working with the real-analytic Eisenstein series and applying techniques such as Rankin--Cohen brackets and holomorphic projection, (see~\cite[Chapter 10]{BFOR2017}), an argument analogous to \cref{prop:RC-bracket} can be carried out. In particular, the Hurwitz relation is obtained as the case corresponding to $\nu = 0$. This approach also yields further relations for the class numbers $\mathbf{t}_0(d)$ for general $\nu \ge 0$, analogous to \cref{cor:Kaneko-Zagier-rel} and \cref{cor:t12-rec}. These results are referred to as the \emph{Eichler--Selberg trace formula}. We also note that these derivations are summarized by Ono--Saad~\cite{OnoSaad2022}.

\begin{theorem}[Eichler--Selberg trace formula~\cite{Eichler1955, Selberg1956}]\label{thm:ES-trace}
	Let $p_l(r,n) \coloneqq p_{2,l}(r,n)$ be the polynomials defined as in \cref{prop:RC-expansion}. For any positive integers $\nu$ and $n$, we have
	\[
		\sum_{r \in \Z} p_{2\nu}(r,n) \mathbf{t}_0(4n-r^2) + \sum_{d \mid n} \min \left(d, \frac{n}{d}\right)^{2\nu+1} = -2 \sum_{j=1}^{d_{2\nu+2}} c_j(n),
	\]
	where $\{f_j\}_{j=1}^{d_{2\nu+2}}$ is an orthogonal basis (a Hecke basis) of the space of cusp forms $S_{2\nu+2}(\SL_2(\Z))$ consisting of normalized Hecke eigenforms, and $c_j(n)$ denotes the $n$-th Fourier coefficient of $f_j(\tau)$.
\end{theorem}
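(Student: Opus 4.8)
The plan is to transport the passage from Zagier's Jacobi form (\cref{thm:Zagier-gen}) to Kaneko's formula (\cref{thm:Kaneko-formula}) into the Kronecker--Hurwitz setting, replacing the holomorphic $g_1(\tau,z)$ by the \emph{harmonic} Maass form $\widehat g_0$ of \cref{thm:Zagier-KH-3/2} (equivalently, a value of the weight-$3/2$ real-analytic Eisenstein series). Two new features appear: the Rankin--Cohen/heat-operator construction of \cref{prop:RC-bracket} now produces a real-analytic modular form rather than a holomorphic one, and a holomorphic projection is needed to return to holomorphic forms; the correction terms generated by holomorphic projection are precisely what will supply the divisor sum $\sum_{d\mid n}\min(d,n/d)^{2\nu+1}$. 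Such a derivation is due to Zagier~\cite{Zagier1991} for $\SL_2(\Z)$ and to Mertens~\cite{Mertens2014} in general; see also the expositions in~\cite{OnoSaad2022} and~\cite[Chapter~10]{BFOR2017}.

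First I would assemble $\widehat g_0$, through the theta decomposition~\eqref{eq:theta-decomp}, into a weight-$2$, index-$1$ harmonic Maass--Jacobi form $\widehat\phi_0(\tau,z)$ — the non-holomorphic analogue of $g_1(\tau,z)\in J_{2,1}^!$ — and then apply the iterated weighted heat operators $L_{2\nu,1}\circ\cdots\circ L_{2,1}$ and let $z\to0$, exactly as in \cref{prop:RC-bracket}, to obtain a real-analytic modular form $\widehat F_\nu$ of weight $2\nu+2$ on $\SL_2(\Z)$. The computation underlying \cref{prop:RC-expansion} is purely formal in the Fourier coefficients, so it carries over verbatim: the holomorphic part of $\widehat F_\nu$ has $n$-th Fourier coefficient $\sum_{r\in\Z}p_{2\nu}(r,n)\,\mathbf{t}_0(4n-r^2)$, while the non-holomorphic part of $\widehat F_\nu$ is the image, under the same operators, of the incomplete-Gamma and $v^{-1/2}$ terms occurring in $\widehat g_0$.

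Next, since $2\nu+2\ge4$, I would apply holomorphic projection $\pi_{\mathrm{hol}}$, obtaining $\Phi_\nu:=\pi_{\mathrm{hol}}(\widehat F_\nu)\in M_{2\nu+2}(\SL_2(\Z))$. For $n\ge1$ its $n$-th Fourier coefficient is $\sum_{r\in\Z}p_{2\nu}(r,n)\,\mathbf{t}_0(4n-r^2)$ plus the contribution of the non-holomorphic part; computing the latter via the standard Fourier-coefficient integral for holomorphic projection reduces it to elementary Gamma and beta integrals weighted by $p_{2\nu}$, and evaluating these and summing the contributions of the terms $q^{-\ell^2}$ should yield exactly $\sum_{d\mid n}\min(d,n/d)^{2\nu+1}$. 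It then remains to identify $\Phi_\nu$ with the trace form $-2\sum_j f_j\in S_{2\nu+2}(\SL_2(\Z))$, whose $n$-th Fourier coefficient is $\sum_j c_j(n)=\operatorname{Tr}(T_n\mid S_{2\nu+2}(\SL_2(\Z)))$. For this one first checks that the Eisenstein component of $\Phi_\nu$ vanishes — for instance by comparing its constant and first Fourier coefficients, using $\mathbf{t}_0(0)=-\tfrac{1}{12}$, or by arguing orthogonality to $E_{2\nu+2}$ — so that $\Phi_\nu\in S_{2\nu+2}(\SL_2(\Z))$, and then computes $\langle\Phi_\nu,f_j\rangle=\langle\widehat F_\nu,f_j\rangle=-2\langle f_j,f_j\rangle$ by a Rankin--Selberg unfolding (equivalently, via the Shimura correspondence relating $\widehat g_0$ to the weight-$(2\nu+2)$ objects); comparing with $\langle\sum_i f_i,f_j\rangle=\langle f_j,f_j\rangle$ gives $\Phi_\nu=-2\sum_j f_j$. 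Reading off $n$-th Fourier coefficients then yields the stated identity; the excluded case $\nu=0$ is the weight-$2$ situation, where $\pi_{\mathrm{hol}}(\widehat F_0)$ is a multiple of the quasimodular $E_2$ and the identity degenerates to the Hurwitz relation (\cref{thm:Hurwitz}).

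The step I expect to be the main obstacle is the holomorphic-projection computation: the incomplete-Gamma terms of $\widehat g_0$ must be carried through the heat operators and the projection integral accurately enough to land on the divisor sum $\sum_{d\mid n}\min(d,n/d)^{2\nu+1}$ rather than on some other combination of $\sigma$-values. A secondary difficulty is the identification $\Phi_\nu=-2\sum_j f_j$, which is really the classical content of the Eichler--Selberg trace formula and requires both controlling the Eisenstein component and the Petersson/Rankin--Selberg input that converts the construction into a sum of Hecke traces.
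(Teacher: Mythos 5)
Your plan follows the same route the paper indicates for this theorem: the paper does not prove \cref{thm:ES-trace} itself, but attributes it to the derivation of Zagier~\cite{Zagier1991} and Mertens~\cite{Mertens2014} (as summarized in~\cite{OnoSaad2022} and \cite[Chapter 10]{BFOR2017}), namely applying the Rankin--Cohen bracket/heat-operator construction of \cref{prop:RC-bracket} to the non-holomorphic generating function $\widehat{g}_0$ of \cref{thm:Zagier-KH-3/2} and then taking holomorphic projection, with the $\min(d,n/d)^{2\nu+1}$ terms coming from the non-holomorphic part and the Hurwitz relation (\cref{thm:Hurwitz}) appearing as the degenerate weight-$2$ case. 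Since this is precisely your outline (including the two steps you correctly flag as the real work: the projection computation and the identification of the resulting cusp form with the Hecke trace form), your proposal is essentially the same approach as the one the paper sketches and cites.
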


\begin{example}
	When $\nu=5$, we have
	\[
		\sum_{r \in \Z} (r^{10} - 9nr^8 + 28n^2 r^6 - 35 n^3r^4 + 15n^4r^2 - n^5) \mathbf{t}_0(4n-r^2) + \sum_{d \mid n} \min \left(d, \frac{n}{d}\right)^{11} = -2 \tau(n),
	\]
	where $\tau(n)$ is the Ramanujan $\tau$-function.
\end{example}

The right-hand side (up to a factor of $-2$) gives the trace of the Hecke operator $T_n$ acting on $S_{2\nu+2}(\SL_2(\Z))$. In light of this theorem and \cref{cor:Kaneko-Zagier-rel}, it is natural to ask how the sum
\[
	\sum_{r \in \Z} p_{2\nu}(r,n) \mathbf{t}_m(4n-r^2)
\]
can be described for $m > 0$. Indeed, Kaneko~\cite{Kaneko2001} himself raises a related question, referring to the Hurwitz relation (\cref{thm:Hurwitz}) and noting that ``many such relations satisfied by the class number of quadratic forms are known, and numerous formulas are collected in Dickson's book~\cite[Chapter VI]{Dickson1966}. It is possible that various variants of recurrence relations for $\mathbf{t}_1(d)$ corresponding to them also exist".

For instance, applying \cref{prop:RC-bracket} and \cref{prop:RC-expansion} for $\nu = 5$, we have
\begin{align*}
	F_{g_1, 5}(\tau) &= \sum_{n=0}^\infty \left(\sum_{r \in \Z} p_{10}(r, n) \mathbf{t}_1(4n-r^2) \right) q^n\\
		&= -2 + 48q -394272q^2 + \cdots\\
		&= -2 \left(E_{12}(\tau) - \frac{82104}{691} \Delta(\tau) \right) \in M_{12}(\SL_2(\Z)).
\end{align*}
The interpretation of the rational coefficient of $\Delta(\tau)$ becomes an issue. Our theorem in~\cite{DengMatsusakaOno2024} shows that it can be described in terms of the \emph{shifted convolution $L$-function} defined by
\[
	\widehat{L}(f, m; s) \coloneqq \sum_{n=1}^\infty \frac{c_f(n) c_f(n+m)}{n^s} - \sum_{n=m+1}^\infty \frac{c_f(n)c_f(n-m)}{n^s}
\]
for a cusp form $f(\tau) = \sum_{n=1}^\infty c_f(n) q^n$ on $\SL_2(\Z)$. In the present case, we find that
\[
	\frac{82104}{691} = 24 - \frac{\Gamma(11)}{(4\pi)^{11}} \frac{\widehat{L}(\Delta, 1; 11)}{\|\Delta\|^2},
\]
where $\|\Delta\|^2 = 0.0000010353\dots$ is the Petersson norm. In general, we have the following.

\begin{theorem}[{Deng--Matsusaka--Ono~\cite{DengMatsusakaOno2024}}]\label{thm:Eichler-Selberg-rel}
	For any positive integers $m, \nu > 0$, we have
	\begin{align*}
		&\sum_{n \gg -\infty} \left(\sum_{r \in \Z} p_{2\nu}(r,n) \mathbf{t}_m(4n-r^2) \right) q^n\\
			&= -2 \left( \sum_{\kappa \mid m} \sum_{0 < r \le \kappa} r^{2\nu+1} P_{2\nu+2, -r(\kappa-r)}(\tau) - \sum_{j=1}^{d_{2\nu+2}} \left(24 \sigma_1(m) - \frac{\Gamma(2\nu+1)}{(4\pi)^{2\nu+1}} \frac{\widehat{L}(f_j, m; 2\nu+1)}{\|f_j\|^2} \right) f_j(\tau) \right),
	\end{align*}
	where $P_{k,m}(\tau)$ is the Eisenstein/Poincar\'{e} series defined by
	\[
		P_{k,m}(\tau) \coloneqq \sum_{\gamma \in \SL_2(\Z)_\infty \backslash \SL_2(\Z)} q^m |_k \gamma,
	\]
	and $\{f_j\}_{j=1}^{d_{2\nu+2}}$ is a Hecke basis of $S_{2\nu+2}(\SL_2(\Z))$ as in \cref{thm:ES-trace}. We note that the index of $p_{2\nu}(r,n)$ differs by $2$ from the one used in~\cite{DengMatsusakaOno2024}.
\end{theorem}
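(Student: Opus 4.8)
The plan is to combine \cref{prop:RC-bracket} and \cref{prop:RC-expansion} applied to Zagier's Jacobi form $g_m \in J_{2,1}^!$ from \cref{thm:Zagier-gen}, and then identify the resulting weakly holomorphic modular form of weight $2\nu+2$ by matching its principal part against a basis of $M_{2\nu+2}^!(\SL_2(\Z))$ built from Poincar\'e series. First I would note that $p_{2\nu}(r,n) = p_{2,2\nu}(r,n)$ since the index of $g_m$ is $1$, so by \cref{prop:RC-expansion} the left-hand side is exactly $F_{g_m,\nu}(\tau) \in M_{2\nu+2}^!(\SL_2(\Z))$. I would then compute its principal part (the terms $q^n$ with $n \le 0$) using the known negative-$d$ and $d=0$ values of $\mathbf{t}_m(d)$ supplied by \cref{thm:Zagier-gen}: only $d = -\kappa^2$ with $\kappa \mid m$ contributes for $d<0$ (with $\mathbf{t}_m(-\kappa^2) = -\kappa$), and $\mathbf{t}_m(0) = 2\sigma_1(m)$. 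Writing $4n - r^2 = -\kappa^2$ means $4n = r^2 - \kappa^2 = (r-\kappa)(r+\kappa)$; reindexing by $r' = r - \kappa$ (or pairing $\pm r$) one finds $n = -r(\kappa-r)$ with $0 < r < \kappa$ (plus boundary terms with $n=0$). This is precisely what produces the Poincar\'e series $P_{2\nu+2, -r(\kappa-r)}(\tau)$ with the weight $r^{2\nu}$ coming from $p_{2\nu}(\pm\kappa, 0) = (\pm\kappa)^{2\nu}\cdot\text{(leading coeff)}$ — I would track the combinatorial constant carefully here, bearing in mind the stated shift-by-$2$ convention for the index of $p_{2\nu}$.

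Next I would set up the basis identity. The space $M_{2\nu+2}^!(\SL_2(\Z))$ is spanned by the $P_{2\nu+2,m}(\tau)$ for $m \le 0$ (where $P_{2\nu+2,0} = E_{2\nu+2}$), and $F_{g_m,\nu}$ is determined by its principal part together with the requirement that it be a \emph{holomorphic} modular form modulo that principal part. After subtracting off $-2\sum_{\kappa\mid m}\sum_{0<r\le\kappa} r^{2\nu+1}P_{2\nu+2,-r(\kappa-r)}(\tau)$ — which I would verify matches the full principal part of $F_{g_m,\nu}$, including the constant term, using $\sum_{\kappa \mid m}\kappa = \sigma_1(m)$ to reconcile the $n=0$ contribution — the difference lies in the holomorphic space $M_{2\nu+2}(\SL_2(\Z)) = \C E_{2\nu+2} \oplus S_{2\nu+2}(\SL_2(\Z))$. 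Projecting onto the Hecke basis $\{f_j\}$ and onto $E_{2\nu+2}$, the claim reduces to computing the inner product / coefficient extraction $\langle F_{g_m,\nu} - (\text{Poincar\'e part}), f_j\rangle$ and the $E_{2\nu+2}$-component. The cusp-form coefficients are obtained via the Petersson inner product of $F_{g_m,\nu}$ (equivalently, of the relevant Poincar\'e series) against $f_j$, and the standard unfolding computation of $\langle P_{2\nu+2,-r(\kappa-r)}, f_j\rangle$ yields Fourier coefficients $c_{f_j}(\cdot)$ of $f_j$; summing over $\kappa \mid m$ and $0 < r \le \kappa$ and telescoping the two ranges $0<r<\kappa$ and the reflection $r \mapsto \kappa - r$ assembles exactly the shifted convolution combination appearing in $\widehat{L}(f_j, m; 2\nu+1)$, up to the gamma and $(4\pi)$-power normalization $\Gamma(2\nu+1)/(4\pi)^{2\nu+1}$ coming from the $m$-th Poincar\'e series' pairing formula. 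The constant $24\sigma_1(m)$ will emerge as the contribution from the Eisenstein-part bookkeeping (it is the $m=1,\nu=5$ instance $82104/691 = 24 - \cdots$ generalized).

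The main obstacle I anticipate is the precise bookkeeping of the shifted convolution. One must (i) correctly reindex the double sum over $(n,r)$ with $4n - r^2 = -\kappa^2$ into a sum over divisors $\kappa \mid m$ and residues $0 < r \le \kappa$, handling the endpoint cases $r=0,\kappa$ (which give $n=0$, i.e.\ the Eisenstein/constant-term contribution) separately and consistently; (ii) match the unfolded Petersson products $\langle P_{2\nu+2, -r(\kappa-r)}, f_j\rangle$, which naturally produce terms like $c_{f_j}(r(\kappa-r))/(r(\kappa-r))^{2\nu+1}$, against the definition of $\widehat{L}(f_j,m;s)$, which is written as $\sum_n c_f(n)c_f(n+m)/n^s - \sum_{n>m} c_f(n)c_f(n-m)/n^s$ — the Hecke multiplicativity $c_f(r)c_f(\kappa - r)$-type relations and the identity $c_f(a)c_f(b) = \sum_{e \mid \gcd(a,b)} e^{2\nu+1} c_f(ab/e^2)$ are what convert one shape into the other, and getting the sign of the two pieces (the $+$ for $n+m$ versus $-$ for $n-m$) to come out right is the delicate point; and (iii) pin down all numerical constants, especially given the explicit warning that the index convention for $p_{2\nu}$ in this survey differs by $2$ from the source paper, so the formula as stated here must be re-derived rather than quoted verbatim. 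I would verify the whole identity against the worked example $m=1$, $\nu=5$ (where it must reproduce $F_{g_1,5} = -2(E_{12} - \tfrac{82104}{691}\Delta)$ and the stated value $82104/691 = 24 - \tfrac{\Gamma(11)}{(4\pi)^{11}}\tfrac{\widehat{L}(\Delta,1;11)}{\|\Delta\|^2}$) as a sanity check before declaring the general case done.
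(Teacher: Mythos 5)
First, note that the survey itself does not prove this theorem: it explicitly defers to \cite{DengMatsusakaOno2024} and only records that the strategy ``follows Zagier's approach to the Eichler--Selberg trace formula \ldots and relies on techniques such as the Rankin--Cohen bracket and the Rankin--Selberg method.'' Your skeleton is consistent with that: identifying the left-hand side as $F_{g_m,\nu}\in M_{2\nu+2}^!(\SL_2(\Z))$ via \cref{prop:RC-bracket,prop:RC-expansion} is correct, and your principal-part matching is sound --- indeed, writing $4n-r^2=-\kappa^2$ as $n=-s(\kappa-s)$ with $s=(\kappa+r)/2$ and using $p_{2,l}(r,n)=(\alpha^{l+1}-\beta^{l+1})/(\alpha-\beta)$ with $\alpha=s$, $\beta=s-\kappa$ gives exactly $p_{2,2\nu}(r,n)\cdot(-\kappa)=-(s^{2\nu+1}+(\kappa-s)^{2\nu+1})$, which matches the Poincar\'e part after pairing $\pm r$. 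Up to this point the proposal is fine.

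The genuine gap is in the cuspidal component, which is where the actual content of the theorem lives. You propose to obtain $\widehat{L}(f_j,m;2\nu+1)$ by unfolding $\langle P_{2\nu+2,-r(\kappa-r)},f_j\rangle$ to get ``Fourier coefficients $c_{f_j}(\cdot)$'' and then applying Hecke multiplicativity. This cannot work for two reasons. First, a naive unfolding of that inner product picks out the $(-r(\kappa-r))$-th Fourier coefficient of the cusp form $f_j$, which is zero; after proper regularization what one obtains instead are coefficients of the \emph{mock modular} (holomorphic) part of a harmonic Maass form whose shadow is $f_j$ (the Bruinier--Funke pairing), not coefficients of $f_j$ itself. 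Second, and more fundamentally, $\widehat{L}(f_j,m;2\nu+1)$ is an \emph{infinite} series that is \emph{quadratic} in the coefficients of $f_j$; no finite collection of quantities linear in $c_{f_j}$, manipulated by the identity $c_f(a)c_f(b)=\sum_{e\mid\gcd(a,b)}e^{2\nu+1}c_f(ab/e^2)$, can produce it. The missing idea is the evaluation of the regularized inner product $\langle F_{g_m,\nu},f_j\rangle^{\mathrm{reg}}$ (equivalently, of the relevant mock modular coefficients) via holomorphic projection and the Rankin--Selberg method in the style of Mertens--Ono~\cite{MertensOno2016}, which is precisely what converts these data into special values of shifted convolution Dirichlet series and also accounts for the constant $24\sigma_1(m)$ (an $E_2$-type quasimodular correction, not ``Eisenstein-part bookkeeping'' of $E_{2\nu+2}$). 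Your plan to sanity-check against the $m=1$, $\nu=5$ example is good practice, but without the holomorphic-projection step the proof as outlined does not reach the right-hand side.
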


The shifted convolution $L$-function was originally considered by Selberg~\cite{Selberg1965}, although it was not the main focus of his work and appears only briefly at the end of his article. This object was later studied by Hoffstein--Hulse~\cite{HoffsteinHulse2016}, Mertens--Ono~\cite{MertensOno2016}, and others. While we do not provide a proof of \cref{thm:Eichler-Selberg-rel} in this article, the general strategy follows Zagier's approach to the Eichler--Selberg trace formula given in~\cite{Zagier1991} and relies on techniques such as the Rankin--Cohen bracket and the Rankin--Selberg method.

\subsection{Other variants}

Lin--McSpirit--Vishnu~\cite{LinMcSpiritVishnu2020} discovered a formula analogous to Kaneko's formula, involving the classical partition function $p(n)$ and its variant $\mathrm{spt}(n)$. The function $\mathrm{spt}(n)$ counts the total number of smallest parts across all partitions of $n$. The abbreviation ``$\mathrm{spt}$" stands for ``smallest parts". To illustrate, consider $n=4$:
\[
	\underline{4}, \quad 3 + \underline{1}, \quad \underline{2} + \underline{2}, \quad 2 + \underline{1} + \underline{1}, \quad \underline{1} + \underline{1} + \underline{1} + \underline{1},
\]
where the smallest part in each partition is underlined. There are ten such occurrences in total, hence $\mathrm{spt}(4) = 10$. This function $\mathrm{spt}(n)$ was introduced by Andrews~\cite{Andrews2008}, and the following generating function formula is known:
\[
	\sum_{n=1}^\infty \mathrm{spt}(n) q^n = \prod_{n=1}^\infty \frac{1}{1-q^n} \cdot \sum_{n=1}^\infty \frac{q^n \prod_{m=1}^{n-1} (1-q^m)}{1-q^n} = q + 3q^2 + 5q^3 + 10q^4 + 14q^5 + \cdots.
\]

\begin{theorem}[Lin--McSpirit--Vishnu~\cite{LinMcSpiritVishnu2020}]\label{thm:LSV}
	For a positive integer $n$, we define
	\[
		m(24n-1) \coloneqq \mathrm{spt}(n) + \frac{24n-1}{12} p(n),
	\]
	and set $m(d) = 0$ for all other $d \in \Z$. For a prime $\ell \ge 5$, we define
	\[
		m_\ell(n) \coloneqq m(\ell^2 n) + \left(\frac{3}{\ell}\right) \left( \left(\frac{-n}{\ell}\right) - (1+\ell) \right) m(n) + \ell m(n/\ell^2).
	\]
	Additionally, we set $m_\ell(-\ell^2) = -\ell/12$ and $m_\ell(-1) = \left(\frac{3}{\ell}\right) \frac{\ell}{12}$. Then, the Fourier coefficients of the $j$-function satisfy
	\[
		\mathrm{Coeff}_{q^n}(j) = \frac{6}{5n} \sum_{r \in \Z} \left(\frac{12}{r}\right) m_5(24n-r^2).
	\]
\end{theorem}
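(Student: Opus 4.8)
The plan is to realize the right-hand side as the value at $z = 0$ of a weakly holomorphic Jacobi form of weight $2$ and index $6$, after which the argument closes exactly as in the proof of \cref{thm:Kaneko-formula}. Throughout, write $\chi(r) \coloneqq \left(\frac{12}{r}\right)$; this is the quadratic character modulo $12$, with $\chi(\pm 1) = 1$, $\chi(\pm 5) = -1$, and $\chi(r) = 0$ unless $\gcd(r,6) = 1$. In particular $\chi(r) \neq 0$ forces $r^2 \equiv 1 \pmod{24}$, so that $24n - r^2 \equiv -1 \pmod{24}$, which is precisely the progression on which $m(d)$, and hence $m_5(d)$, is supported. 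So the twisted sum on the right-hand side is well adapted to $m_5$, and proving the identity amounts to showing $\tfrac{6}{5} F_{\phi,0} = Dj$ for the function $\phi$ introduced below.

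The crucial input is that
\[
	\phi(\tau, z) \coloneqq \sum_{n \gg -\infty} \sum_{r \in \Z} \chi(r)\, m_5(24n - r^2)\, q^n \zeta^r
\]
lies in $J_{2,6}^!$. This is the substance of \cite{LinMcSpiritVishnu2020}, built on the analytic theory of the $\mathrm{spt}$-function of Andrews~\cite{Andrews2008} as developed by Ono and by Bringmann. Concretely: the completion of $\sum_{n \ge 1} m(24n-1) q^{24n-1}$ obtained by adjoining a non-holomorphic term of incomplete-Gamma type (as in \cref{thm:Zagier-KH-3/2}) is a harmonic Maass form of weight $3/2$ on $\Gamma_0(576)$ with character $\chi$, whose shadow (its image under the $\xi$-operator) is a constant multiple of the unary theta series $\sum_r \chi(r) q^{r^2}$ of weight $1/2$. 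The operation in the definition of $m_\ell$ is a suitable combination of weight-$3/2$ Hecke operators at $\ell$ twisted by $\chi$, and since such operators commute with the $\xi$-operator while the unary theta series is a Hecke eigenform, one checks that for primes $\ell$ with $\left(\frac{3}{\ell}\right) = -1$ — in particular for $\ell = 5$ — this combination annihilates the theta shadow. Hence the resulting form is weakly holomorphic of weight $3/2$; reading off its principal part produces the initial values $m_5(-25) = -5/12$ and $m_5(-1) = \left(\frac{3}{5}\right)\frac{5}{12}$, and identifies its holomorphic part with $\sum_n m_5(24n-1) q^{n-1/24}$. Translating this through the theta decomposition \eqref{eq:theta-decomp}--\eqref{eq:vect-modular} at index $6$ — the nonzero components being $h_\mu(\tau) = \chi(\mu) \sum_n m_5(24n-1) q^{n - 1/24}$ for $\gcd(\mu,6) = 1$, whose transformation laws \eqref{eq:vect-modular} are exactly the above modularity statement for $k = 2$, $m = 6$ — gives $\phi \in J_{2,6}^!$.

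Granting this, the proof concludes as in \cref{thm:Kaneko-formula}. By \cref{prop:RC-bracket} with $\nu = 0$ (and the example following it),
\[
	F_{\phi,0}(\tau) = \phi(\tau, 0) = \sum_{n \gg -\infty} \Bigl( \sum_{r \in \Z} \chi(r)\, m_5(24n - r^2) \Bigr) q^n \in M_2^!(\SL_2(\Z)).
\]
The only negative arguments with $m_5 \neq 0$ are $-25$ and $-1$, so the $q^{-1}$-coefficient of $F_{\phi,0}$ comes only from $24(-1) - r^2 = -25$, i.e. $r = \pm 1$, and equals $2\chi(1) m_5(-25) = -\tfrac{5}{6}$; the $q^0$-coefficient comes from $-r^2 \in \{-25,-1\}$, i.e. $r = \pm 5$ and $r = \pm 1$, and equals $2\chi(5) m_5(-25) + 2\chi(1) m_5(-1) = \tfrac{5}{6} - \tfrac{5}{6} = 0$; and there is no contribution in degrees $n \le -2$. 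Thus $F_{\phi,0}(\tau) = -\tfrac{5}{6}q^{-1} + O(q)$, while $Dj(\tau) = -q^{-1} + \sum_{n \ge 1} n\, \mathrm{Coeff}_{q^n}(j)\, q^n \in M_2^!(\SL_2(\Z))$. Hence $\tfrac{6}{5} F_{\phi,0} - Dj \in M_2(\SL_2(\Z)) = \{0\}$, and comparing coefficients of $q^n$ for $n \ge 1$ yields $n\,\mathrm{Coeff}_{q^n}(j) = \tfrac{6}{5}\sum_{r \in \Z} \chi(r)\, m_5(24n - r^2)$, as claimed.

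The main obstacle is the middle step, namely $\phi \in J_{2,6}^!$ — equivalently, that the Hecke-twisted generating function of $m_5$ is genuinely a weakly holomorphic modular form of weight $3/2$. This is where the real work lies: it requires importing the harmonic-Maass-form structure of the $\mathrm{spt}$ generating function, and then exploiting the arithmetic coincidence $\left(\frac{3}{5}\right) = -1$, which is exactly what forces the chosen Hecke combination to kill the theta shadow (the same mechanism that underlies the $\mathrm{spt}$-congruences). The remaining ingredients — the Jacobi-form bookkeeping, the elementary principal-part computation, and the appeal to $M_2(\SL_2(\Z)) = \{0\}$ — are routine.
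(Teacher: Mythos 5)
Your endgame is exactly the paper's: build a form in $J_{2,6}^!$ whose coefficients are $\left(\frac{12}{r}\right) m_5(24n-r^2)$, evaluate at $z=0$, check that the principal part is $-\tfrac{5}{6}q^{-1}$ with vanishing constant term, and conclude from $M_2(\SL_2(\Z)) = \{0\}$ by comparison with $Dj$; your principal-part computation and the final comparison agree with the paper. The difference lies entirely in how the key modularity input is justified. The paper simply cites Ono's theorem that $F_\ell(\tau) = \eta(\tau)^{\ell^2} M_\ell(\tau/24)$ is a holomorphic modular form of weight $(\ell^2+3)/2$ on $\SL_2(\Z)$; since $\ell^2 \equiv 1 \pmod{24}$, this immediately yields the explicit transformation laws \eqref{eq:Nl-trans} of $N_\ell(\tau) = M_\ell(\tau/24)$ under $T$ and $S$, hence the vector-valued laws \eqref{eq:vect-modular} for $h_\mu = \left(\frac{12}{\mu}\right) N_\ell$ at weight $2$, index $6$. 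You instead sketch a re-derivation of weak holomorphy via the harmonic-Maass-form and shadow-annihilation mechanism.

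Two caveats about that sketch. First, the assertion that the Hecke combination kills the theta shadow only for primes with $\left(\frac{3}{\ell}\right) = -1$ misdescribes the mechanism: the factor $\left(\frac{3}{\ell}\right)$ built into the definition of $m_\ell$ is precisely the eigenvalue sign of the unary theta shadow, so the combination annihilates the shadow for every prime $\ell \ge 5$ (this is the content of Ono's theorem). The error is harmless here because you only use $\ell = 5$, where indeed $\left(\frac{3}{5}\right) = -1$. Second, and more substantively, knowing that the resulting form is weakly holomorphic of weight $3/2$ on $\Gamma_0(576)$ with character $\left(\frac{12}{\cdot}\right)$ does \emph{not} by itself give the transformation $N_5(-1/\tau) = i^{1/2}\tau^{3/2} N_5(\tau)$ that the index-$6$ theta decomposition requires; you need the eta-multiplier behavior under all of $\SL_2(\Z)$ (equivalently, that $(\left(\frac{12}{\mu}\right)N_5)_\mu$ transforms under the relevant Weil-representation laws \eqref{eq:vect-modular}), and your phrase ``whose transformation laws are exactly the above modularity statement'' glosses over exactly this point. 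That stronger statement is what the paper extracts from Ono's result via $\eta^{25} M_5(\tau/24)$ being modular on the full group together with $25 \equiv 1 \pmod{24}$. If you either cite that result or verify the $S$-transformation of $N_5$ directly, your argument closes and is essentially the paper's proof.
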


Although this statement is not explicitly stated in~\cite{LinMcSpiritVishnu2020}, it is equivalent to their Theorem~1.2. Their theorem serves as an analogue of Kaneko's formula (\cref{thm:Kaneko-original}), and just as Kaneko's formula can be reformulated as in \cref{thm:Kaneko-formula}, their theorem can similarly be rewritten as in the theorem above. In the final remark of Section~1 in~\cite{LinMcSpiritVishnu2020}, the authors announced that ``Toshiki Matsusaka informed the authors that he has obtained further similar results", which refers to \cref{thm:LSV} and the following proof.

\begin{proof}
	As in Zagier's theorem (\cref{thm:Zagier-gen}), the first goal is to construct a suitable Jacobi form. Let
	\[
		M_\ell(\tau) \coloneqq \sum_{n = -\ell^2}^\infty m_\ell(n) q^n.
	\]
	Recall that Ono~\cite[Theorem 2.2]{Ono2011} showed that the function
	\[
		F_\ell(\tau) \coloneqq \eta(\tau)^{\ell^2} M_\ell(\tau/24)
	\]
	is a weight $(\ell^2+3)/2$ holomorphic modular form on $\SL_2(\Z)$. Since $\ell^2 \equiv 1 \pmod{24}$, the function $N_\ell(\tau) \coloneqq M_\ell(\tau/24)$ satisfies the transformation laws
	\begin{align}\label{eq:Nl-trans}
		N_\ell(\tau+1) = \mathbf{e}\left(-\frac{1}{24}\right) N_\ell(\tau), \qquad N_\ell \left(-\frac{1}{\tau}\right) = i^{1/2} \tau^{3/2} N_\ell(\tau).
	\end{align}
	For each $\mu \in \Z/12\Z$, we define
	\[
		h_\mu(\tau) \coloneqq \left(\frac{12}{\mu}\right) N_\ell(\tau).
	\]
	When $(\mu,12) > 1$, we have $h_\mu(\tau) = 0$. To match the transformation law for Jacobi forms given in~\eqref{eq:vect-modular}, we rewrite \eqref{eq:Nl-trans} using the identity $\sum_{\nu \in \Z/12\Z} \mathbf{e}(\mu \nu/12) \left(\frac{12}{\mu \nu}\right) = \sqrt{12}$ for $(\mu,12) = 1$ as follows:
	\[
		h_\mu(\tau+1) = \mathbf{e}\left(-\frac{\mu^2}{24}\right) h_\mu(\tau), \qquad h_\mu\left(-\frac{1}{\tau}\right) = \frac{i^{1/2} \tau^{3/2}}{\sqrt{12}} \sum_{\nu \in \Z/12\Z} \mathbf{e} \left(\frac{\mu \nu}{12}\right) h_{\nu}(\tau).
	\]
	This is the transformation law that components of Jacobi forms of weight $2$ and index $6$ should satisfy. Therefore, we have
	\begin{align*}
		\varphi_\ell(\tau, z) &\coloneqq \sum_{\mu \in \Z/12\Z} h_\mu(\tau) \theta_{6, \mu}(\tau, z) = N_\ell(\tau) \sum_{\mu \in \Z/12\Z} \left(\frac{12}{\mu}\right) \theta_{6,\mu}(\tau, z)\\
			&= \sum_{n \gg -\infty} \sum_{r \in \Z} \left(\frac{12}{r}\right) m_\ell(24n-r^2) q^n \zeta^r \in J_{2,6}^!.
	\end{align*}
	For $\ell = 5$, evaluating at $z =0$ implies that
	\[
		\varphi_5(\tau, 0) = \sum_{n \gg -\infty} \left(\sum_{r \in \Z} \left(\frac{12}{r}\right) m_5(24n-r^2) \right) q^n = -\frac{5}{6} q^{-1} + O(q) \in M_2^!(\SL_2(\Z)),
	\]
	which coincides with $5/6 Dj(\tau)$.
\end{proof}

Other examples in this direction include the work of Imamo\={g}lu--Raum--Richter~\cite{ImamogluRaumRichter2014}, who provided an identity expressing the finite sums of the Fourier coefficients of Ramanujan's 3rd-order mock theta function
\[
	f_3(q) \coloneqq 1 + \sum_{n=1}^\infty \frac{q^{n^2}}{(1+q)^2 (1+q^2)^2 \cdots (1+q^n)^2}.
\]
Similar techniques can, in fact, be applied to various other combinatorial $q$-series as well.

\subsection{Applications}

We also briefly introduce two results as applications of the Kaneko formula. First, according to a result published in 2003 by Baier--K\"{o}hler~\cite{BaierKohler2003}, there are several methods for computing the Fourier coefficients of the $j$-function. Among them, the method that combines the recurrence relations satisfied by the traces of singular moduli (\cref{cor:Kaneko-Zagier-rel}) with Kaneko's formula (\cref{thm:Kaneko-original}) is particularly efficient. 

Second, Murty--Sampath~\cite{MurtySampath2016} provided an alternative proof for the asymptotic formula for the Fourier coefficients of the $j$-function by applying the Laplace method to the Kaneko formula, thereby avoiding the use of the circle method. We now briefly outline their approach. The Laplace method is stated as follows.

\begin{lemma}[Laplace's method]
	Let $f$ be a real-valued $C^2$ function defined on an open interval $(a,b) \subset \R$. Suppose that $f$ has a unique maximum at $t = c \in (a,b)$ so that $f''(c) < 0$. Then, as $\lambda \to \infty$, we have
	\[
		\int_a^b e^{\lambda f(t)} \dd t \sim e^{\lambda f(c)} \left(\frac{2\pi}{\lambda |f''(c)|}\right)^{1/2}.
	\]
\end{lemma}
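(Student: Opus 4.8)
The plan is the classical localization argument: essentially all the mass of the integral concentrates in a shrinking neighbourhood of the maximum $c$, where $f$ looks like a downward parabola, so the integral is asymptotically a Gaussian integral. First I would divide both sides by $e^{\lambda f(c)}$ and set $g(t)\coloneqq f(t)-f(c)$, reducing the claim to $\sqrt{\lambda}\int_a^b e^{\lambda g(t)}\,\dd t \to (2\pi/|f''(c)|)^{1/2}$; here $g\le 0$ on $(a,b)$, $g(c)=g'(c)=0$, and $g''(c)=f''(c)<0$. I would also record the mild standing hypothesis that $\int_a^b e^{\lambda_0 g}<\infty$ for some $\lambda_0$ (needed for the integral to converge at all), and then fix $\delta>0$ with $[c-\delta,c+\delta]\subset(a,b)$, splitting $\int_a^b=\int_{c-\delta}^{c+\delta}+\int_{|t-c|\ge\delta}$ and treating the two pieces separately.

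For the central piece the key input is Taylor's theorem: given $\epsilon$ with $0<\epsilon<|f''(c)|$, one can shrink $\delta$ so that $\tfrac12(f''(c)-\epsilon)(t-c)^2\le g(t)\le\tfrac12(f''(c)+\epsilon)(t-c)^2$ for $|t-c|\le\delta$, both outer terms being negative multiples of $(t-c)^2$. Exponentiating, integrating, and substituting $s=\sqrt{\lambda}\,(t-c)$ sandwiches $\sqrt{\lambda}\int_{c-\delta}^{c+\delta}e^{\lambda g}\,\dd t$ between the two integrals $\int_{-\delta\sqrt{\lambda}}^{\delta\sqrt{\lambda}}e^{\frac12(f''(c)\pm\epsilon)s^2}\,\dd s$, which tend to $(2\pi/|f''(c)\mp\epsilon|)^{1/2}$ as $\lambda\to\infty$ by the standard Gaussian integral evaluation. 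Letting $\lambda\to\infty$ and then $\epsilon\to0$ extracts the desired constant from the central piece.

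For the tail the point is simply that it is exponentially smaller. Since $c$ is the \emph{unique} maximum, $g$ is bounded above by some $-\eta<0$ off the interval $(c-\delta,c+\delta)$, so for $\lambda\ge\lambda_0$ one estimates $\sqrt{\lambda}\int_{|t-c|\ge\delta}e^{\lambda g}\,\dd t \le \sqrt{\lambda}\,e^{-(\lambda-\lambda_0)\eta}\int_a^b e^{\lambda_0 g}\,\dd t \to 0$. Adding the central and tail estimates finishes the proof. I expect the only genuine obstacle to be this last step when $(a,b)$ is unbounded or $f$ blows up near an endpoint: there the uniform negativity $\sup_{|t-c|\ge\delta}g<0$ is not a free consequence of compactness and must instead be extracted from the assumed convergence of $\int e^{\lambda_0 g}$. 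If one restricts to the situation relevant here, where $(a,b)$ is bounded and $f$ extends continuously to $[a,b]$, compactness makes this step immediate and the whole argument is routine.
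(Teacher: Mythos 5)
Your argument is the standard localization proof of Laplace's method and it is correct; there is nothing to compare it against, because the paper states this lemma as a classical fact and gives no proof of it at all (it is used only as a black box to derive the asymptotic for $\mathrm{Coeff}_{q^n}(j)$). Two of your side remarks are worth keeping rather than treating as optional: as literally stated, the lemma needs the additional hypothesis that $\int_a^b e^{\lambda f(t)}\,\dd t$ converges for some $\lambda$, and the tail bound needs $\sup_{|t-c|\ge\delta}\bigl(f(t)-f(c)\bigr)<0$, which does not follow from ``unique maximum'' alone on an open (possibly unbounded) interval. Both are automatic in the only application made in the paper, where $(a,b)=(-1,1)$ and $f(t)=4\pi\sqrt{1-t^2}$ extends continuously to $[-1,1]$ with its strict maximum at $t=0$, so your compactness remark settles the matter. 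The central-piece analysis (Taylor sandwich $\tfrac12(f''(c)\mp\epsilon)(t-c)^2$, substitution $s=\sqrt{\lambda}\,(t-c)$, Gaussian limit, then $\epsilon\to0$) is complete and correct as written.
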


This leads to the following well-known asymptotic formula.

\begin{theorem}
	We have
	\[
		\mathrm{Coeff}_{q^n}(j) \sim \frac{e^{4\pi \sqrt{n}}}{\sqrt{2} n^{3/4}}
	\]
	as $n \to \infty$.
\end{theorem}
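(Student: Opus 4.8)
The plan is to apply Kaneko's formula (\cref{thm:Kaneko-formula}) in its form $\mathrm{Coeff}_{q^n}(j) = \frac{1}{2n}\sum_{r \in \Z} \mathbf{t}_2(4n-r^2)$ and to isolate the dominant contribution. The singular moduli $j(\alpha_Q)$ for $\alpha_Q$ on the boundary of the standard fundamental domain grow like $e^{2\pi \Im(\alpha_Q)}$, and among all $\alpha_Q$ with $Q \in \mathcal{Q}_d$ the largest imaginary part is achieved by the principal form $[1,b,c]$, giving $\Im(\alpha_Q) = \sqrt{d}/2$. Thus $j_2(\alpha_Q) \sim e^{\pi\sqrt{d}}$ for the principal form, and this single term dominates $\mathbf{t}_2(d)$ up to lower-order corrections; summing over $r$ with $4n - r^2 = d > 0$, the dominant $d$ is $d = 4n$ (i.e. $r = 0$), contributing $\sim e^{2\pi\sqrt{n}}$, but we must sum over all $r$, so the relevant sum is $\sum_{r} e^{\pi\sqrt{4n-r^2}}$.

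First I would replace $\mathbf{t}_2(4n-r^2)$ by its leading term $e^{\pi\sqrt{4n-r^2}}$ (with explicit control of the error: the subdominant singular modulus contributes $O(e^{\pi\sqrt{d}/2})$ and non-principal forms even less, while $q^{-2}$ versus $q^{-1}$ in $j_2$ versus $j$ only changes the constant, not the exponential rate). This reduces the problem to the asymptotics of $\frac{1}{2n}\sum_{r \in \Z} e^{\pi\sqrt{4n-r^2}}$. Next I would pass from the sum to an integral via Euler--Maclaurin (the summand is smooth and unimodal in $r$ on $[-2\sqrt{n}, 2\sqrt{n}]$), obtaining $\frac{1}{2n}\int_{-2\sqrt{n}}^{2\sqrt{n}} e^{\pi\sqrt{4n-r^2}}\,dr$ up to relative error $O(n^{-1/2})$. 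Substituting $r = 2\sqrt{n}\,t$ gives $\frac{1}{\sqrt{n}}\int_{-1}^{1} e^{2\pi\sqrt{n}\sqrt{1-t^2}}\,dt$, which is exactly of the form $\int_{-1}^1 e^{\lambda f(t)}\,dt$ with $\lambda = 2\pi\sqrt{n}$ and $f(t) = \sqrt{1-t^2}$.

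Then I would apply Laplace's method: $f(t) = \sqrt{1-t^2}$ has its unique maximum at $t = 0$ with $f(0) = 1$ and $f''(0) = -1$, so the lemma yields $\int_{-1}^1 e^{2\pi\sqrt{n}\sqrt{1-t^2}}\,dt \sim e^{2\pi\sqrt{n}}\sqrt{2\pi/(2\pi\sqrt{n})} = e^{2\pi\sqrt{n}}\,n^{-1/4}$. Wait — this gives $e^{2\pi\sqrt{n}}$, not $e^{4\pi\sqrt{n}}$; the discrepancy is resolved by recalling $\sqrt{d} = \sqrt{4n-r^2}$ already carries the factor $2$ inside, so in fact the principal singular modulus for discriminant $-4n$ is $e^{\pi\sqrt{4n}} = e^{2\pi\sqrt n}$ — so one must be more careful: the true dominant term comes not from $r=0$ alone but from the full Gaussian-type spread, and the correct bookkeeping (using $q^{-1}$-normalization $e^{\pi\sqrt{d}}$ where $-d$ is the discriminant, with $d$ running up to $4n$) does produce the integrand $e^{2\pi\sqrt{n}\sqrt{1-t^2}}$, whose Laplace asymptotic is $e^{2\pi\sqrt n} n^{-1/4}$; combining with the prefactor $\frac{1}{\sqrt n}\cdot\frac{1}{2}\cdot 2$ from $\frac{1}{2n}\cdot 2\sqrt n$ I recover $\frac{e^{2\pi\sqrt n}}{\cdots}$ — and the stated $e^{4\pi\sqrt n}$ forces the resolution that $\Im(\alpha_Q) = \sqrt d / 2$ combined with $j(\tau)\sim e^{2\pi\Im\tau}$... gives $e^{\pi\sqrt d}$, and with $d$ ranging only up to $4n$ this is at most $e^{2\pi\sqrt n}$. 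The honest fix, which I would carry out carefully in the writeup, is that Kaneko's formula must be applied so that the argument inside is $16n - r^2$ or equivalently the traces $\mathbf{t}_1(n-r^2)$ etc. from \cref{thm:Kaneko-original} with the term $\mathbf{t}_1(16n-r^2)$ dominating, giving $e^{\pi\sqrt{16n}} = e^{4\pi\sqrt n}$ as the true exponential scale, and then the Laplace analysis runs as above with $\lambda = 4\pi\sqrt n$. \textbf{The main obstacle} is precisely this bookkeeping: tracking the exponential rate through the normalization of $j_m$, the discriminant-to-imaginary-part conversion $\Im(\alpha_Q) = \sqrt{d}/2$, and identifying which term in \cref{thm:Kaneko-original} dominates, so that the integrand and the parameter $\lambda$ feeding into Laplace's method are correct; once that is pinned down, estimating the non-principal singular moduli and the sum-to-integral passage are routine, and Laplace's method finishes the proof.
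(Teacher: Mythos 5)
Your overall strategy (Kaneko's formula, dominance of the principal form, Riemann sum, Laplace's method) is exactly the paper's, but there is a genuine error at the step where you estimate the singular moduli, and your attempted repair does not work as stated. The mistake is the parenthetical claim that ``$q^{-2}$ versus $q^{-1}$ in $j_2$ versus $j$ only changes the constant, not the exponential rate.'' It changes the exponential rate by a factor of $2$: since $j_2(\tau) = q^{-2} + O(q)$, one has $|j_2(\alpha_Q)| \sim e^{4\pi \Im(\alpha_Q)}$, so for the principal form with $\Im(\alpha_Q) = \sqrt{d}/2$ the correct estimate is $\mathbf{t}_2(d) \sim e^{2\pi\sqrt{d}}$, not $e^{\pi\sqrt{d}}$. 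With this correction the computation you set up goes through verbatim: $\frac{1}{2n}\sum_r e^{2\pi\sqrt{4n-r^2}} = \frac{1}{\sqrt{n}}\cdot\frac{1}{2\sqrt{n}}\sum_r e^{4\pi\sqrt{n}\sqrt{1-(r/2\sqrt{n})^2}} \sim \frac{1}{\sqrt{n}}\int_{-1}^1 e^{4\pi\sqrt{n}\sqrt{1-t^2}}\,\dd t$, and Laplace's method with $\lambda=\sqrt{n}$, $f(t)=4\pi\sqrt{1-t^2}$, $f''(0)=-4\pi$ gives $e^{4\pi\sqrt{n}}/(\sqrt{2}\,n^{3/4})$. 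This is precisely the paper's proof; there was never any discrepancy to resolve.

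Your proposed fix --- switching to \cref{thm:Kaneko-original} and letting the $\mathbf{t}_1(16n-r^2)$ term dominate --- gets the exponential scale $e^{4\pi\sqrt{n}}$ right but cannot be waved through ``as above,'' because that term carries the oscillating weight $\tfrac{(-1)^r}{4}$, and a sum $\sum_r (-1)^r g(r)$ with $g$ smooth and slowly varying is \emph{not} asymptotic to $\int g$; naively it exhibits near-total cancellation. The route can be salvaged, but only by noticing that the leading term of $\mathbf{t}_1(d)$ itself carries a sign: for $d\equiv 3\pmod 4$ the principal root is $(-1+i\sqrt{d})/2$, so $j_1(\alpha)\sim e^{\pi i b}e^{\pi\sqrt{d}} = (-1)^d e^{\pi\sqrt{d}}$, and $(-1)^r(-1)^{16n-r^2} = (-1)^{r+r^2} = 1$, so the two signs cancel identically. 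You would need to carry out this sign bookkeeping (and also control the two subdominant families $\mathbf{t}_1(n-r^2)$ and $\mathbf{t}_1(4n-r^2)$) for your version to be a proof. The $\mathbf{t}_2$ formulation avoids all of this --- which is exactly why the paper uses it --- since $e^{-4\pi i(-1+i\sqrt{d})/2} = e^{2\pi\sqrt{d}}$ with no sign.
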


\begin{proof}
	Here, we sketch the proof. For each quadratic form $Q = [a,b,c] \in \mathcal{Q}_d$ such that the associated root $\alpha_Q \in \bbH$ lies in the standard fundamental domain of $\SL_2(\Z) \backslash \bbH$, we have
	\[
		j_2(\alpha_Q) = e^{-4\pi i \frac{-b+ i\sqrt{d}}{2a}} + O(e^{-\pi \sqrt{d}/a}).
	\]
	For each discriminant $-d < 0$, the contribution of $j_2(\alpha_Q)$ is largest when $a=1$. It is known, however, that any quadratic form $[1, b, c]$ is $\SL_2(\Z)$-equivalent to either $[1,0,d/4]$ if $d \equiv 0 \pmod{4}$, or $[1,1,(1+d)/4]$ if $d \equiv 3 \pmod{4}$. Therefore, the contribution to $\mathbf{t}_2(d)$ comes only from these special quadratic forms, so that we have $\mathbf{t}_2(d) \sim e^{2\pi \sqrt{d}}$ as $d \to \infty$. (The refined asymptotic behavior of $\mathbf{t}_m(d)$ is given in the works of Duke and Andersen~\cite{Duke2006, AndersenDuke2022}). Therefore, Kaneko's formula (\cref{thm:Kaneko-formula}) implies that
	\[
		\mathrm{Coeff}_{q^n}(j) \sim \frac{1}{2n} \sum_{|r| \le 2\sqrt{n}} e^{2\pi \sqrt{4n-r^2}} = \frac{1}{\sqrt{n}} \frac{1}{2\sqrt{n}} \sum_{|r| \le 2\sqrt{n}} e^{4\pi \sqrt{n} \sqrt{1 - (\frac{r}{2\sqrt{n}})^2}}.
	\]
	Interpreting this sum as a Riemann sum, it can be approximated as
	\[
		\mathrm{Coeff}_{q^n}(j) \sim \frac{1}{\sqrt{n}} \int_{-1}^1 e^{4\pi \sqrt{n} \sqrt{1-t^2}} \dd t.
	\]
	By applying Laplace's method to the function $f(t) = 4\pi \sqrt{1-t^2}$ with $\lambda = \sqrt{n}$ and $c=0$, we have
	\[
		\mathrm{Coeff}_{q^n}(j) \sim \frac{1}{\sqrt{n}} e^{4\pi \sqrt{n}} \left(\frac{2\pi}{4\pi \sqrt{n}}\right)^{1/2} = \frac{e^{4\pi \sqrt{n}}}{\sqrt{2} n^{3/4}}
	\]
	as desired. For further details, see~\cite{MurtySampath2016}.
\end{proof}

In~\cite{MatsusakaOsanai2017}, this method is further applied to higher-level analogues such as~\cref{thm:higher-level}, yielding asymptotic formulas for the coefficients of several Hauptmoduln as well.

\section{Rohrlich-type divisor sums}\label{sec:Rohrlich}

Finally, we review some results on more general sums of special values of modular functions over divisors, extending beyond traces of singular moduli. 

\subsection{Bruinier--Kohnen--Ono's formula}

Let $N$ be a positive integer, and set $Y_0(N) \coloneqq \Gamma_0(N) \backslash \bbH$. The $\Q$-vector space of divisors on $Y_0(N)$ is given by
\[
	\mathrm{Div}(Y_0(N))_\Q = \left\{ \sum_{[z] \in Y_0(N)} n_z [z] : n_z \in \Q, n_z = 0 \text{ for almost all } [z] \right\}.
\]

\begin{definition}
	For a function $F: Y_0(N) \to \C$, we define a map $\mathcal{D}_F: \mathrm{Div}(Y_0(N))_\Q \to \C$ by
	\[
		\mathcal{D}_F(D) \coloneqq \sum_{[z] \in Y_0(N)} n_z F(z)
	\]
	for $D = \sum_{[z] \in Y_0(N)} n_z [z]$.
\end{definition}

\begin{example}
	For $N=1$ and $F(\tau) = j_m(\tau)$ with $m \ge 1$, the trace of singular moduli $\mathbf{t}_m(d)$ is given by $\mathcal{D}_F(D)$ for
	\begin{align}\label{eq:disc-div}
		D = \sum_{Q \in \mathcal{Q}_d/\PSL_2(\Z)} \frac{1}{|\PSL_2(\Z)_Q|} [\alpha_Q] \in \mathrm{Div}(Y_0(1))_\Q.
	\end{align}
\end{example}

As a different type of sum, we now consider those involving divisors of meromorphic modular forms. Let $M_*^\mathrm{mer}(\Gamma_0(N))$ be the space of meromorphic modular forms of integral weight on $\Gamma_0(N)$. For any nonzero $f \in M_*^\mathrm{mer}(\Gamma_0(N))$, we define the \emph{divisor map}
\[
	\mathrm{div}(f) \coloneqq \sum_{[z] \in Y_0(N)} \frac{\mathrm{ord}_z(f)}{|\overline{\Gamma_0(N)}_z|} [z] \in \mathrm{Div}(Y_0(N))_\Q,
\]
where $\mathrm{ord}_z(f) \in \Z$ is the order of $f$ as a meromorphic function on $\bbH$ at $\tau = z$. In this setting, Bruinier, Kohnen, and Ono established a different kind of relation connecting special values of modular functions with Fourier coefficients of modular forms.

\begin{theorem}[Bruinier--Kohnen--Ono~\cite{BruinierKohnenOno2004}]\label{thm:BKO}
	For a positive integer $m$ and $f \in M_k^\mathrm{mer}(\SL_2(\Z))$ of weight $k \in \Z$, we have
	\[
		\mathcal{D}_{j_m}(\div(f)) + 2k \sigma_1(m) = -\mathrm{Coeff}_{q^m} \left(\frac{Df}{f}\right).
	\]
\end{theorem}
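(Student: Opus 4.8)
The plan is to reduce the statement to a comparison of two weakly holomorphic modular forms of weight $2$ on $\SL_2(\Z)$, exactly in the spirit of the proof of Theorem~\ref{thm:Kaneko-formula}. The essential observation is that the left-hand side is a trace of the modular function $j_m$ over a divisor that itself comes from a modular object, and the Rankin--Cohen/heat-operator machinery of Section~2 should not even be needed here; instead one works directly with the logarithmic derivative. First I would note that $Df/f$ is a meromorphic modular form of weight $2$ on $\SL_2(\Z)$, holomorphic on $\bbH$ away from the zeros and poles of $f$, where it has simple poles with residues $\ord_z(f)/(2\pi i)$ (in the $\tau$-variable), i.e. its principal part at a point $z\in\bbH$ is governed precisely by $\ord_z(f)$. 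The idea is to manufacture from the data $(\div(f), m)$ an \emph{explicit} weight-$2$ weakly holomorphic form whose poles in $\bbH$ match those of $Df/f$ and whose pole at the cusp encodes $\mathrm{Coeff}_{q^m}(Df/f)$, and then invoke that a weight-$2$ holomorphic modular form on $\SL_2(\Z)$ vanishes.

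Concretely, the second key step is to recall the standard meromorphic weight-$2$ Poincaré-type kernel: for $z\in\bbH$ there is a weight-$2$ meromorphic modular form $\mathcal{P}_z(\tau)$ on $\SL_2(\Z)$ (built from the $j$-function, essentially $\tfrac{1}{2\pi i}\,\partial_\tau\log\bigl(j(\tau)-j(z)\bigr)$ suitably corrected at the elliptic points) whose only pole on $\bbH$ modulo $\SL_2(\Z)$ is a simple pole at $z$ with the correct residue, and whose $q$-expansion at $\infty$ has constant term a universal constant plus a term $-(2\sigma_1(m)+\dots)$-type contribution; more useful for tracking the right-hand side is the identity $\mathrm{Coeff}_{q^m}\!\bigl(\tfrac{Df}{f}\bigr) = -\sum_{z}\tfrac{\ord_z(f)}{|\overline{\SL_2(\Z)}_z|}\,j_m(z) - 2k\sigma_1(m)$ itself, which is what we want; so the clean route is to use the pairing between the divisor of $f$ and $j_m$ directly. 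I would therefore instead run the argument as follows: write $g \coloneqq \dfrac{Df}{f} \in M_2^{\mathrm{mer}}(\SL_2(\Z))$, and consider the weight-$2$ \emph{weakly holomorphic} form obtained by subtracting from $g$ the explicit combination $\sum_{[z]\in Y_0(1)} \tfrac{\ord_z(f)}{|\overline{\SL_2(\Z)}_z|}\,\mathcal{P}_z$ of the kernels above; by construction this difference is holomorphic on $\bbH$, hence lies in $M_2^!(\SL_2(\Z))$, and comparing principal parts at $\infty$ against $Dj_m = -mq^{-m}+O(q)$ and the known constant terms of the $\mathcal{P}_z$ pins it down. Matching the $q^m$-coefficient and the constant term then yields the stated identity after using that $M_2(\SL_2(\Z))=\{0\}$.

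The step I expect to be the main obstacle is the bookkeeping of \emph{constant terms and elliptic-point corrections}: the kernel $\mathcal{P}_z$ must be normalized so that (a) its residue at $z$ is exactly $\ord_z(f)/(2\pi i)$ with the $|\overline{\SL_2(\Z)}_z|$ weighting absorbed correctly, (b) its $q^0$ and $q^m$ coefficients are computed in closed form, and (c) the contributions at $\tau=i$ and $\tau=\rho$ (where $Df/f$ can have poles coming from $\ord$ of $f$ there, and where $j-j(z)$ has higher-order vanishing) are handled so that nothing is double-counted — this is exactly where the factor $2k\sigma_1(m)$ comes from, via the weight-$2$ anomaly $E_2$ (since $\tfrac{Df}{f}$ for $f=\Delta$ gives $-E_2$, and $\ord(\Delta)$ is supported only at the cusp). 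A cleaner way to organize (c) is to first verify the theorem for the generators $f=E_4,\ E_6,\ \Delta$ of the graded algebra of meromorphic modular forms, where $\div(f)$ is a single point (at $\rho$, at $i$, at $\infty$ respectively) and both sides are immediate from $Dj_m=-mq^{-m}+O(q)$, $j_1(\rho)=-744$, $j_1(i)=1728-744$, etc.; then extend to general $f$ by multiplicativity, using that $\div(f_1f_2)=\div(f_1)+\div(f_2)$, that $\mathcal{D}_{j_m}$ is linear, that $\tfrac{D(f_1f_2)}{f_1f_2}=\tfrac{Df_1}{f_1}+\tfrac{Df_2}{f_2}$, and that weight is additive, so the relation is preserved under products and hence holds for all $f\in M_k^{\mathrm{mer}}(\SL_2(\Z))$.
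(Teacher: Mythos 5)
The paper itself does not prove this theorem (it is quoted from Bruinier--Kohnen--Ono, and later recovered as a corollary of \cref{thm:JKKM2025} via Stokes' theorem), so I am comparing your proposal against the standard argument. Your first route is the right one in outline, but it starts from a false premise: $Df/f$ is \emph{not} a meromorphic modular form of weight $2$ on $\SL_2(\Z)$. Differentiating $f(\gamma\tau)=(c\tau+d)^k f(\tau)$ shows that $Df/f$ picks up the anomaly $\tfrac{kc}{2\pi i(c\tau+d)}$, i.e.\ it is quasi-modular, and the modular object is $\tfrac{Df}{f}-\tfrac{k}{12}E_2$. This is not a cosmetic point: if you subtract only the kernels $\mathcal{P}_z=D\log(j-j(z))$ from $Df/f$, the resulting function is holomorphic on $\bbH$ and at $\infty$ but is \emph{not} in $M_2(\SL_2(\Z))$ (for $f=\Delta$ it equals $E_2\neq 0$), so the step ``a holomorphic weight-$2$ form is zero'' does not apply and the argument does not close. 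You do gesture at the $E_2$ anomaly near the end (with a sign slip: $D\Delta/\Delta=+E_2$), but it must be built into the form you feed to $M_2(\SL_2(\Z))=\{0\}$; its $q^m$-coefficient $-2k\sigma_1(m)$ is exactly the extra term in the statement. The other missing ingredient is the identity $\tfrac{Dj(\tau)}{j(\tau)-j(z)}=-\sum_{m\ge 0}j_m(z)q^m$ (equivalently $\tfrac{E_4^2E_6}{\Delta}\tfrac{1}{j-j(z)}=\sum_m j_m(z)q^m$), which is what makes $\mathrm{Coeff}_{q^m}(\mathcal{P}_z)=-j_m(z)$; you allude to it circularly (``the identity \dots itself, which is what we want'') but never establish or cite it, and it is the actual crux. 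The elliptic-point bookkeeping, by contrast, is automatic: $j-j(z)$ vanishes to order $|\Gamma_z|$ at $z$, so $\mathcal{P}_z$ has residue $|\Gamma_z|/2\pi i$ there, which is precisely why the weights $\ord_z(f)/|\Gamma_z|$ appear.

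Your fallback via multiplicativity is sound in principle (all three terms are additive under products and change sign under reciprocals), but the generating set is wrong: $E_4$, $E_6$, $\Delta$ and constants do not multiplicatively generate $M_*^{\mathrm{mer}}(\SL_2(\Z))\setminus\{0\}$ --- for instance $j-c$ for generic $c$ has divisor supported at a non-elliptic point and is not a monomial in $E_4,E_6,\Delta$. You must add the forms $j-c$ ($c\in\C$) to your list, and verifying the theorem for $f=j-c$ is exactly the generating-function identity above, so this route does not avoid the hard input. Nor are the base cases $E_4,E_6$ ``immediate'': computing all coefficients of $DE_4/E_4=\tfrac{1}{3}E_2-\tfrac{1}{3}E_6/E_4$ and matching them with $\tfrac13 j_m(e^{2\pi i/3})$ again requires the same kernel identity. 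Once the $\tfrac{k}{12}E_2$ correction and the kernel expansion are in place, the rest of your plan (constant terms match by the valence formula, then extract the $q^m$-coefficient) is correct and is essentially the Bruinier--Kohnen--Ono proof.
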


\begin{example}
	Let $f(\tau) = E_4(\tau) \in M_4(\SL_2(\Z))$. This modular form has a unique simple zero at $\tau = e^{2\pi i/3}$ on $Y_0(1) = \SL_2(\Z) \backslash \bbH$. Applying the theorem to this case, we obtain
	\begin{align*}
		\frac{1}{3} j_m(e^{2\pi i/3}) + 8\sigma_1(m) &= -\mathrm{Coeff}_{q^m} \left(\frac{DE_4}{E_4}\right)\\
			&= -\mathrm{Coeff}_{q^m} \bigg(240q - 53280q^2 + 12288960q^3 + \cdots \bigg).
	\end{align*}
	For instance, this gives $\mathbf{t}_1(3) = -248$ and $\mathbf{t}_2(3) = 53256$. 
\end{example}

More generally, by applying \cref{thm:BKO} to the Borcherds product, one can also obtain a formula for $\mathbf{t}_m(d)$. See~\cite{Borcherds1995, Kaneko2001, Zagier2002} for details on the relation between $\mathbf{t}_m(d)$ and Borcherds' result. For each negative discriminant $-d \equiv 0, 1 \pmod{4}$, there exists a unique weakly holomorphic modular form $f_d(\tau)$ of weight $1/2$ whose Fourier expansion of the form
\[
	f_d(\tau) = q^{-d} + \sum_{0 < n \equiv 0, 1\ (4)} c_d(n) q^n.
\]
For example,
\begin{align*}
	f_3(\tau) &= \frac{1}{12} \frac{E_4(4\tau)}{\Delta(4\tau)} \left(6E_6(4\tau) D\theta_0(\tau) - \frac{1}{2} (DE_6)(4\tau) \cdot \theta_0(\tau)\right) - 88 \theta_0(\tau)\\
		&= q^{-3} - 248q + 26752q^4 - 85995q^5 + \cdots,
\end{align*}
where $\theta_0(\tau) = \sum_{r \in \Z} q^{r^2}$. Then the traces of singular moduli $\mathbf{t}_m(d)$ can be expressed in terms of the Fourier coefficients of $f_d(\tau)$. The following result was originally established by Zagier~\cite{Zagier2002}, and the proof presented below is merely a reformulation of his argument using \cref{thm:BKO}. 

\begin{corollary}[Zagier~\cite{Zagier2002}]
	For $m > 0$ and a negative discriminant $-d \equiv 0, 1 \pmod{4}$, we have
	\[
		\mathbf{t}_m(d) = \sum_{n|m} n c_d(n^2).
	\]
	In particular, $\mathbf{t}_1(d) = c_d(1)$.
\end{corollary}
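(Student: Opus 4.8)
The plan is to apply \cref{thm:BKO} to the Borcherds lift of $f_d$. Recall from Borcherds' theory (\cite{Borcherds1995}; see also~\cite{Zagier2002}) that, because $f_d$ has principal part $q^{-d}$ and vanishing constant term, $f_d$ lifts to a meromorphic modular form $\Psi_d$ of weight $0$ on $\SL_2(\Z)$ (possibly carrying a finite-order multiplier system, which is immaterial below, since \cref{thm:BKO} may be applied to a suitable power of $\Psi_d$) whose divisor on $Y_0(1)$ is the discriminant $-d$ Heegner divisor
\[
	\div(\Psi_d) = \sum_{Q \in \mathcal{Q}_d/\Gamma} \frac{1}{|\Gamma_Q|} [\alpha_Q],
\]
i.e.\ the divisor~\eqref{eq:disc-div}, and which admits the infinite product expansion
\[
	\Psi_d(\tau) = q^{h} \prod_{n=1}^\infty (1-q^n)^{c_d(n^2)}
\]
for a constant $h$ whose precise value is irrelevant for $m \ge 1$.

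First I would compute the logarithmic derivative of $\Psi_d$. Using $D = q\,\dd/\dd q$, the product expansion gives
\begin{align*}
	\frac{D\Psi_d}{\Psi_d}
	&= h + \sum_{n=1}^\infty c_d(n^2)\,\frac{-n q^n}{1-q^n}\\
	&= h - \sum_{n=1}^\infty \sum_{k=1}^\infty n\, c_d(n^2)\, q^{nk},
\end{align*}
so that for every $m \ge 1$,
\[
	\mathrm{Coeff}_{q^m}\!\left(\frac{D\Psi_d}{\Psi_d}\right) = -\sum_{n \mid m} n\, c_d(n^2).
\]
On the other hand, since $\div(\Psi_d)$ is exactly the divisor~\eqref{eq:disc-div}, the definition of $\mathcal{D}_{j_m}$ together with~\eqref{eq:trace-singular} (valid because $d > 0$) yields $\mathcal{D}_{j_m}(\div(\Psi_d)) = \mathbf{t}_m(d)$. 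Applying \cref{thm:BKO} to $f = \Psi_d$, whose weight is $k = 0$ so that the term $2k\sigma_1(m)$ disappears, I would conclude
\[
	\mathbf{t}_m(d) = \mathcal{D}_{j_m}(\div(\Psi_d)) = -\mathrm{Coeff}_{q^m}\!\left(\frac{D\Psi_d}{\Psi_d}\right) = \sum_{n \mid m} n\, c_d(n^2),
\]
which is the claimed identity; the case $m = 1$ reads $\mathbf{t}_1(d) = c_d(1)$.

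The main obstacle is supplying the Borcherds-theoretic input: that $\Psi_d$ is a genuine meromorphic modular form on $\SL_2(\Z)$ (single-valued up to a harmless multiplier), that the exponents in its infinite product are precisely the coefficients $c_d(n^2)$ of $f_d$ at the squares, and --- the most delicate point --- that its divisor is the Heegner divisor with exactly the weights $1/|\Gamma_Q|$, with no spurious factor coming from $\pm I$. Once these facts are granted, everything else is the short formal manipulation above, and the corollary becomes simply Borcherds' product recast into the shape demanded by \cref{thm:BKO}.
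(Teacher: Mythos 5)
Your proposal is correct and follows essentially the same route as the paper: both apply \cref{thm:BKO} to the weight-$0$ Borcherds product $q^{-\mathbf{t}_0(d)}\prod_{n\ge 1}(1-q^n)^{c_d(n^2)}$, whose divisor is the Heegner divisor \eqref{eq:disc-div}, and read off the identity from the $m$-th coefficient of the logarithmic derivative (the constant term $h=-\mathbf{t}_0(d)$ being irrelevant for $m\ge 1$, and the character/multiplier being harmless, as the paper also notes).
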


\begin{proof}
	The Borcherds product $\Psi(f_d)$ is defined by the following expression and, moreover, satisfies the identity
	\[
		\Psi(f_d) \coloneqq q^{-\mathbf{t}_0(d)} \prod_{n=1}^\infty (1-q^n)^{c_d(n^2)} = \prod_{Q \in \mathcal{Q}_d/\Gamma} (j(\tau) - j(\alpha_Q))^{1/|\Gamma_Q|},
	\]
	where $\Gamma = \PSL_2(\Z)$ as before. This defines a weight $0$ modular form on $\SL_2(\Z)$ (with a certain character). In particular, $\Psi(f_d)$ has no poles in $\bbH$, and has a simple zero at each point $\alpha_Q$. Thus, we have
	\[
		\div(\Psi(f_d)) = \sum_{Q \in \mathcal{Q}_d/\Gamma} \frac{1}{|\Gamma_Q|} [\alpha_Q]
	\]
	as in \eqref{eq:disc-div}. Therefore, applying \cref{thm:BKO}, (which also applies to modular forms with character), yields that $\mathbf{t}_m(d) = \mathcal{D}_{j_m}(\div(\Psi(f_d)))$ is equal to the negative of the $m$-th coefficient of
	\begin{align*}
		D \log \Psi(f_d) &= -\mathbf{t}_0(d) - \sum_{n=1}^\infty n c_d(n^2) \frac{q^n}{1-q^n}\\
			&= -\mathbf{t}_0(d) - \sum_{\ell, n \ge 1} n c_d(n^2) q^{\ell n},
	\end{align*}
	which concludes the proof.
\end{proof}

This corollary shows that $\mathbf{t}_1(d)$ appears as a coefficient of a weight $1/2$ modular form $f_d$, while Zagier's formula \eqref{eq:Zagier-half} expresses $\mathbf{t}_1(d)$ as a coefficient of a weight $3/2$ modular form $g_1$. The symmetry between weights $1/2$ and $3/2$, known as the \emph{modular grid}, was formalized and established by Kaneko and Zagier~\cite{Zagier2002}. After numerous partial extensions, this symmetry has been significantly generalized to a much broader setting by Griffin--Jenkins--Molnar~\cite{GriffinJenkinsMolnar2022}.

\subsection{Jeon--Kang--Kim--Matsusaka's method}

Building on \cref{thm:BKO} as well as the classical result of Rohrlich~\cite{Rohrlich1984}, the author, together with Jeon--Kang--Kim~\cite{JKKM2025, JKKM2025-pre} investigated more general divisor sums of the form $\mathcal{D}_F(\div(f))$. Rohrlich's result corresponds to the special case where $F(\tau) = \log(\Im(\tau)^6 |\Delta(\tau)|)$, the so-called Kronecker limit function. Meanwhile, \cref{thm:BKO} corresponds to the case $F = j_m$, providing a parallel yet distinct example within this broader framework.

To motivate our approach, we recall two more classical results on $\SL_2(\Z)$. From now on, we write $\tau = u + iv$, where $u,v \in \R$ are the real and imaginary parts of $\tau \in \bbH$.

\begin{theorem}[Valence formula]\label{thm:valence}
	For the constant function $F = 1$ and for any nonzero $f \in M_k^\mathrm{mer}(\SL_2(\Z))$, we have
	\[
		\mathcal{D}_F(\div(f)) = \sum_{[z] \in \SL_2(\Z) \backslash \bbH} \frac{\mathrm{ord}_z(f)}{|\PSL_2(\Z)_z|} = \frac{k}{12} - \mathrm{ord}_{i\infty}(f),
	\]
	where $\mathrm{ord}_{i\infty}(f)$ is the order of $f$ at the cusp $i\infty$.
\end{theorem}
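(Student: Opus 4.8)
The plan is the classical contour-integration argument. Write $\rho \coloneqq e^{2\pi i/3}$, and let $\mathcal{F} \coloneqq \{\tau \in \bbH : |\Re\tau| \le 1/2,\ |\tau| \ge 1\}$ be the standard fundamental domain for $\SL_2(\Z)$, whose orbits of elliptic points are represented by $i$ (with $|\PSL_2(\Z)_i| = 2$) and by $\rho \sim \rho^2 = \rho + 1$ (with $|\PSL_2(\Z)_\rho| = 3$). Since $f$ is meromorphic at $i\infty$ it has only finitely many zeros and poles in $\mathcal{F}$; fix $T > 0$ larger than all of their imaginary parts and such that the line $\Im\tau = T$ meets none of them, and put $\mathcal{F}_T \coloneqq \mathcal{F} \cap \{\Im\tau \le T\}$. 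First I would integrate the logarithmic derivative $\tfrac{1}{2\pi i}\tfrac{f'}{f}\,\dd\tau$ counterclockwise around $\partial\mathcal{F}_T$, deformed by small circular arcs of radius $\varepsilon$ that route around $i$, $\rho$, $\rho^2$ and around every zero or pole of $f$ on the boundary. Note that a zero or pole of $f$ on $\partial\mathcal{F}$ lies either on the bottom arc $|\tau| = 1$ or on a vertical side, and hence comes in a pair exchanged by $\tau \mapsto -1/\tau$ or by $\tau \mapsto \tau \pm 1$, the only exception being the point $i$, which is fixed by $\tau \mapsto -1/\tau$.

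Then I would evaluate each contribution and let $\varepsilon \to 0$, $T \to \infty$. By the argument principle the full integral equals $\sum_z \ord_z(f)$, summed over $\SL_2(\Z)$-inequivalent points $z$ in the \emph{interior} of the indented region (all of which have $|\PSL_2(\Z)_z| = 1$). The straight portions of the two vertical sides cancel, since $f(\tau + 1) = f(\tau)$ is $1$-periodic and $\tau \mapsto \tau + 1$ identifies the left side with the right one reversing orientation. On the top segment, the substitution $q = e^{2\pi i\tau}$ converts the integral into a clockwise loop about $q = 0$, contributing $-\ord_{i\infty}(f)$ via the $q$-expansion of $f$. On the bottom arc, the relation $f(-1/\tau) = \tau^k f(\tau)$ gives $\tfrac{\dd}{\dd\tau}\log f(-1/\tau) = \tfrac{f'(\tau)}{f(\tau)} + \tfrac{k}{\tau}$, and since $\tau \mapsto -1/\tau$ carries the left half of the arc onto the right half reversing orientation, the two halves combine so that only $-\tfrac{k}{2\pi i}\int_\rho^i \tfrac{\dd\tau}{\tau} = \tfrac{k}{12}$ remains. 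Finally, an indentation at a point where $\mathcal{F}$ has interior angle $\theta$ and $f$ has a zero or pole of order $n$ is traversed clockwise and contributes $-\tfrac{\theta}{2\pi}n$; here $\theta = \pi$ at $i$ and at every non-elliptic boundary point, while $\theta = \pi/3$ at the corners $\rho$ and $\rho^2$. The indentations attached to one $\SL_2(\Z)$-orbit then contribute, in total, exactly $-\ord_z(f)/|\PSL_2(\Z)_z|$ for that orbit: a non-elliptic boundary orbit has two indentations, each contributing $-\tfrac12\ord_z(f)$; the orbit of $i$ has one, contributing $-\tfrac12\ord_i(f)$; and the orbit of $\rho$ has two (at $\rho$ and at $\rho^2$), contributing $-\tfrac16\ord_\rho(f) - \tfrac16\ord_{\rho^2}(f) = -\tfrac13\ord_\rho(f)$ (with $\ord_{\rho^2}(f) = \ord_\rho(f)$ since $\rho^2 = \rho + 1$).

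Collecting everything, the contour identity reads
\[
	\sum_{z\text{ interior}} \ord_z(f) = -\ord_{i\infty}(f) + \frac{k}{12} - \sum_{z\text{ boundary}} \frac{\ord_z(f)}{|\PSL_2(\Z)_z|},
\]
and moving the last sum to the left assembles the whole weighted sum $\sum_{[z] \in \SL_2(\Z)\backslash\bbH} \ord_z(f)/|\PSL_2(\Z)_z| = \mathcal{D}_F(\div(f))$ with $F = 1$, yielding $\mathcal{D}_F(\div(f)) = \tfrac{k}{12} - \ord_{i\infty}(f)$. I expect the main obstacle to be precisely this boundary bookkeeping: one must check that every $\SL_2(\Z)$-orbit meeting $\partial\mathcal{F}$, including the elliptic orbits, is counted with the correct total weight $1/|\PSL_2(\Z)_z|$, that the short boundary segments deleted by the indentations drop out as $\varepsilon \to 0$, and that the orientation-reversing pairings of the arc (under $\tau \mapsto -1/\tau$) and of the vertical sides (under $\tau \mapsto \tau \pm 1$) remain compatible once the indentations are inserted. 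The remaining estimates — the vanishing of the added arcs' length-times-bound contributions, the convergence of the top integral, and so on — are elementary.
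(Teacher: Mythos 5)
Your argument is correct: it is the classical valence-formula proof by integrating $\tfrac{1}{2\pi i}\tfrac{f'}{f}\,\dd\tau$ over the indented boundary of the truncated fundamental domain, and your bookkeeping (cancellation of the vertical sides, $-\ord_{i\infty}(f)$ from the top, $k/12$ from the $\tau\mapsto -1/\tau$ pairing on the arc, and indentation contributions $-\tfrac{\theta}{2\pi}\ord_z(f)$ with $\theta=\pi$ at generic boundary points and at $i$, $\theta=\pi/3$ at $\rho$ and $\rho+1$) assembles the weights $1/|\PSL_2(\Z)_z|$ exactly as needed. The paper, however, does not reprove the formula this way: it states \cref{thm:valence} as a classical motivating result and then recovers it as a special case of the general Rohrlich-type identity \cref{thm:JKKM2025}, by taking $g=E_2^*$ so that $F=\xi_2 E_2^*=3/\pi$ is constant, noting $\Delta_0F=0$ kills the regularized inner product term, and computing the single cusp constant $C_{i\infty}(f,E_2^*)=-\tfrac{3}{\pi}\ord_{i\infty}(f)+\tfrac{k}{4\pi}$. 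The two routes are related in spirit (the proof of \cref{thm:JKKM2025} is itself a Stokes-theorem argument on a truncated fundamental domain with $\varepsilon$-balls removed, of which your contour computation is the elementary ancestor), but they buy different things: your proof is self-contained, elementary, and independent of any regularization or Maass--Poincar\'e machinery, whereas the paper's derivation is uniform — the same theorem, fed other weight-$2$ inputs $G_{N,2,0,1}$ and $G_{N,2,m,1}$, simultaneously yields \cref{thm:Rohrlich} and \cref{thm:BKO} and their level-$N$ analogues, at the cost of invoking $E_2^*$, the $\xi$-operator, and the explicit evaluation of the cusp constants. The only caveats in your write-up are the ones you already flag (compatibility of the indentations with the side/arc pairings and the vanishing of the deleted short segments), all of which are standard.
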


\begin{theorem}[Rohrlich~\cite{Rohrlich1984}]\label{thm:Rohrlich}
	For the Kronecker limit function $F(\tau) = \log(v^6 |\Delta(\tau)|)$ and for any $f \in M_k^{\mathrm{mer}}(\SL_2(\Z))$ with $\lim_{\tau \to i\infty}f(\tau) = 1$, we have
	\[
		\mathcal{D}_F(\div(f)) = -\frac{3}{\pi} \langle 1, \log(v^{k/2} |f|)\rangle^{\mathrm{reg}} + \frac{\pi k}{6}C - \frac{k}{2},
	\]
	where $\langle \cdot, \cdot \rangle^\mathrm{reg}$ is the regularized Petersson inner product and $C = (6 - 72\zeta'(-1) - 6 \log(4\pi))/\pi$ and $\zeta(s)$ is the Riemann zeta function.
\end{theorem}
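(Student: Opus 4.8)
The plan is to prove \cref{thm:Rohrlich} by a Green's-identity computation on the fundamental domain, organized so that it becomes Rohrlich's ``modular Jensen formula'' recast in the $\mathcal{D}_{(\cdot)}(\div(\cdot))$ language of this section. The starting point is that $F$ is the \emph{Kronecker limit function}: writing $\Delta(\tau) = \eta(\tau)^{24}$ one has the exact identity $F(\tau) = \log(v^6 |\Delta(\tau)|) = 12\log\bigl(\sqrt{v}\,|\eta(\tau)|^2\bigr)$, and the first Kronecker limit formula identifies $-\tfrac{1}{2\pi} F(\tau)$, up to an explicit additive constant, with the constant term in the Laurent expansion at $s = 1$ of the real-analytic Eisenstein series $E(\tau, s) = \sum_{\gamma \in \SL_2(\Z)_\infty \backslash \SL_2(\Z)} \Im(\gamma\tau)^s$. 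Equivalently --- and this is the form I would actually use --- $F$ is the unique $\SL_2(\Z)$-invariant function on $\bbH$ that is smooth on all of $\bbH$ (since $\Delta$ has no zeros there), satisfies $\Delta_{\mathrm{hyp}} F = 6$ for the hyperbolic Laplacian $\Delta_{\mathrm{hyp}} = -v^2(\partial_u^2 + \partial_v^2)$, and has cusp expansion $F(\tau) = 6\log v - 2\pi v + O(e^{-2\pi v})$ as $v \to \infty$. One may also note that $F = \log(v^{k/2}|f|)$ for $(f, k) = (\Delta, 12)$, so the statement has a faintly self-referential flavor; from an Arakelov-theoretic viewpoint it says that $\mathcal{D}_F(\div(f))$ computes the archimedean part of the height pairing of $\div(f)$ against the divisor of $\Delta$ on the compactified modular curve, the cusp contribution being the origin of the regularization.

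Concretely, I would apply Green's second identity to $F$ and to the $\SL_2(\Z)$-invariant function $g(\tau) \coloneqq \log\bigl(v^{k/2}|f(\tau)|\bigr)$ on the truncated fundamental domain $\mathcal{F}_T = \mathcal{F} \cap \{v \le T\}$, with small hyperbolic disks excised around the elliptic points $i$ and $e^{2\pi i/3}$ and around the finitely many zeros and poles of $f$ in $\mathcal{F}$. Two Laplacian facts feed in: first, $\Delta_{\mathrm{hyp}} F = 6$ with no distributional part; second, since $\log|f|$ is harmonic off $\div(f)$, one has $\Delta_{\mathrm{hyp}} g = \tfrac{k}{2}$ on the excised region, while the small circle around a zero or pole $z_0$ contributes, as its radius shrinks, $-2\pi\,\mathrm{ord}_{z_0}(f)\,|\overline{\SL_2(\Z)}_{z_0}|^{-1} F(z_0)$ (the reciprocal stabilizer order appearing because only a $1/|\overline{\SL_2(\Z)}_{z_0}|$-wedge of a neighborhood of an elliptic point lies inside $\mathcal{F}$). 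Summed over $\div(f)$ this produces exactly $-2\pi\,\mathcal{D}_F(\div(f))$. The hypothesis $f(\tau) \to 1$ as $\tau \to i\infty$ enters here: it forces $\mathrm{ord}_{i\infty}(f) = 0$ and makes the cusp expansion $g(\tau) = \tfrac{k}{2}\log v + O(e^{-2\pi v})$, with neither a linear-in-$v$ term nor a constant $\log|{\cdot}|$ term, so that $\langle 1, g\rangle^{\mathrm{reg}} = \int^{\mathrm{reg}}_{\mathcal{F}} g\,\dd\mu$ (both arguments having weight $0$) is in fact an ordinary convergent integral.

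Assembling the identity, the side identifications of $\mathcal{F}$ cancel in the boundary integral because $F$ and $g$ are $\SL_2(\Z)$-invariant, and (after the excised disks shrink) only the horocycle $v = T$ survives. Inserting the cusp expansions of $F$ and $g$ and letting $T \to \infty$, the divergent part of this boundary integral cancels against the divergent part of $\tfrac{k}{2}\int_{\mathcal{F}_T} F\,\dd\mu$, the term $\int_{\mathcal{F}_T} g\,\dd\mu$ converging to $\langle 1, g\rangle^{\mathrm{reg}}$ with no counterterm; what remains is
\[
    \mathcal{D}_F(\div(f)) = -\frac{3}{\pi}\,\langle 1, g\rangle^{\mathrm{reg}} + \frac{k}{4\pi}\int^{\mathrm{reg}}_{\mathcal{F}} F\,\dd\mu - \frac{k}{2},
\]
where $\tfrac{3}{\pi} = 1/\mathrm{vol}(\SL_2(\Z)\backslash\bbH)$. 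It then only remains to evaluate $\int^{\mathrm{reg}}_{\mathcal{F}} F\,\dd\mu$. Since $\Delta_{\mathrm{hyp}} F$ is constant and $F$ is the Kronecker limit function, this regularized integral is the classical Kronecker-limit constant and equals $2\pi^2 C/3$, where $C = (6 - 72\zeta'(-1) - 6\log(4\pi))/\pi$; the term $\zeta'(-1)$ appears through the functional equation of $\zeta(s)$ (equivalently, through the scattering coefficient $\varphi(s) = \xi(2s-1)/\xi(2s)$ of $E(\tau,s)$, with $\xi(w) = \pi^{-w/2}\Gamma(w/2)\zeta(w)$: its functional equation $\xi(w) = \xi(1-w)$ relates the finite part of $\varphi$ at $s = 1$ to $\zeta'/\zeta$ at $w = -1$, and $\zeta(-1) = -1/12$). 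Substituting gives the asserted formula.

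I expect the main obstacle to be precisely this last step --- the constant bookkeeping at the cusp: tracking how the linear growth $-2\pi v$ in $F$ interacts with the horocycle boundary integral and with the regularized integral $\int^{\mathrm{reg}}_{\mathcal{F}} F\,\dd\mu$, and pinning down the normalization of the latter so that the $\zeta'(-1)$ and $\log(4\pi)$ come out exactly as in $C$. The remaining ingredients --- the Green's identity with its elliptic and divisorial singularities, and the cancellation of the side contributions --- are technical but routine once the excision is set up; the hypothesis $f \to 1$ at the cusp is what keeps this part clean, by killing a would-be linear term in $g$.
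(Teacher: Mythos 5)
Your argument is correct in outline and, with the constants checked, does deliver the stated formula; but it is a genuinely different route from the one taken here. In this paper \cref{thm:Rohrlich} is not proved by a bespoke Green's-identity computation with the pair $(F,\log(v^{k/2}|f|))$: it is obtained as a special case of \cref{thm:JKKM2025}, applied with $g=G_{1,2,0,1}$, the $(s-1)^1$-Laurent coefficient of the weight-$2$ Maass--Poincar\'e series, for which \eqref{eq:xi-P} gives $\xi_2 g=G_{1,0,0,0}=-\tfrac{1}{2\pi}\log(v^6|\Delta|)+C$; the Stokes argument is packaged once and for all in the $1$-form $\omega$ built from $\mathcal{D}_k f=Df/f-k/(4\pi v)$ and $g$, the constant $C$ enters through the Kronecker limit expansion of $P_{1,0,0}(\tau,s)$ (together with the cusp constants $C_{i\infty}(f,g)$ and, in effect, the valence formula, since $\mathcal{D}_{\xi_2 g}(\div f)=-\tfrac{1}{2\pi}\mathcal{D}_F(\div f)+C\,\tfrac{k}{12}$), and the same machinery simultaneously yields \cref{thm:valence}, \cref{thm:BKO}, and their $\Gamma_0(N)$ analogues. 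Your proof is instead essentially Rohrlich's original ``modular Jensen formula'': Green's second identity for $F$ and $g=\log(v^{k/2}|f|)$ on the truncated, excised fundamental domain, using $\Delta_{\mathrm{hyp}}F=6$, $\Delta_{\mathrm{hyp}}g=k/2$, and the cusp expansions; what this buys is a self-contained, elementary level-one argument, at the cost of having to evaluate the regularized integral of $F$ by hand rather than reading the constant off a Laurent expansion. On that last point (the step you flag): with the regularization dictated by your own bookkeeping, namely subtracting exactly the divergence $-2\pi\log T$ produced by the $-2\pi v$ term of $F$, the value $\int_{\mathcal{F}}^{\mathrm{reg}}F\,\dd\mu=\tfrac{2\pi^2}{3}C$ is indeed correct: comparing constant terms at $s=1$ in $\int_{\mathcal{F}_T}E(\tau,s)\,\dd\mu=\tfrac{T^{s-1}}{s-1}-\tfrac{\varphi(s)T^{-s}}{s}$ with the expansion $E(\tau,s)=\tfrac{3/\pi}{s-1}-\tfrac{1}{2\pi}F(\tau)+C+O(s-1)$ gives $\int_{\mathcal{F}_T}F\,\dd\mu=\tfrac{2\pi^2C}{3}-2\pi\log T+o(1)$, and the $\pm\pi k\log T$ terms cancel between $\tfrac{k}{2}\int_{\mathcal{F}_T}F\,\dd\mu$ and the horocycle boundary term, whose finite part $-\pi k$ supplies the $-k/2$; so your intermediate identity and the final constant assembly are consistent with the theorem, provided you state this convention for $\langle\cdot,\cdot\rangle^{\mathrm{reg}}$ and $\int^{\mathrm{reg}}$ explicitly.
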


In our article~\cite{JKKM2025}, we developed a unified framework for the Rohrlich-type divisor sums $\mathcal{D}_F(\div(f))$. To state the result, we define the hyperbolic Laplacian $\Delta_k$ defined by $\Delta_k \coloneqq -\xi_{2-k} \circ \xi_k$, where $\xi_k$ is the differential operator given by
\[
	\xi_k f(\tau) \coloneqq 2iv^k \overline{\frac{\partial}{\partial \overline{\tau}} f(\tau)}.
\]
Roughly speaking, our main result asserts the following.

\begin{theorem}[Jeon--Kang--Kim--Matsusaka~\cite{JKKM2025}]\label{thm:JKKM2025}
	Let $g$ be a smooth modular form of weight $2$ on $\Gamma_0(N)$, and let $f \in M_k^{\mathrm{mer}}(\Gamma_0(N))$. We define a weight $0$ smooth modular form $F(\tau) = \xi_2 g(\tau)$. Then we have
	\[
		\mathcal{D}_F(\div(f)) + \frac{1}{2\pi} \langle \Delta_0 F, \log(v^{k/2} |f|) \rangle^\mathrm{reg} = \sum_{\rho} C_\rho(f,g)
	\]
	if the regularized inner product converges. Here, the sum is taken over all cusps $\rho \in \Gamma_0(N) \backslash (\Q \cup \{i\infty\})$, and each constant $C_\rho(f,g) \in \C$ is explicitly determined by $f(\tau), g(\tau)$, and $\rho$.
\end{theorem}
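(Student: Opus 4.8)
The plan is to prove this by Green's (Stokes') theorem on a truncated fundamental domain, exactly in the spirit of the classical proofs of the valence formula (\cref{thm:valence}), Rohrlich's theorem (\cref{thm:Rohrlich}), and \cref{thm:BKO}. Fix a fundamental domain $\mathcal{F}$ for $\Gamma_0(N)$ on $\bbH$, and for $T$ large and $\varepsilon$ small let $\mathcal{F}_{T,\varepsilon}$ be obtained from $\mathcal{F}$ by deleting a horocyclic neighbourhood of each cusp $\rho$ (cut at height $T$ in a local coordinate at $\rho$) together with a small disk of radius $\varepsilon$ around each zero and pole of $f$ lying in $\mathcal{F}$. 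Set $w \coloneqq \log(v^{k/2}|f|)$; by the weight-$k$ transformation law of $f$ the function $v^{k/2}|f|$ is $\Gamma_0(N)$-invariant, so $w$ descends to an (orbifold) function on $Y_0(N)$, as does $F = \xi_2 g$, which has weight $0$. Recall that $\Delta_0 = -v^2(\partial_u^2 + \partial_v^2)$ and that the invariant measure is $\dd\mu = v^{-2}\,\dd u\,\dd v$.

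First I would move the Laplacian off $F$. On $\mathcal{F}_{T,\varepsilon}$, away from the zeros and poles of $f$, one has $\Delta_0 w = \Delta_0\log(v^{k/2}) = k/2$ because $\log|f|$ is harmonic there; hence Green's identity (with $\partial_n$ the outward normal and $\dd s$ Euclidean arclength, up to the usual normalization) gives
\[
	\langle \Delta_0 F, w\rangle_{\mathcal{F}_{T,\varepsilon}} = \frac{k}{2}\int_{\mathcal{F}_{T,\varepsilon}} F\,\dd\mu + \oint_{\partial\mathcal{F}_{T,\varepsilon}} \bigl( F\,\partial_n w - w\,\partial_n F \bigr)\,\dd s .
\]
Letting $\varepsilon \to 0$, the contribution of the circle $\partial D_\varepsilon(z)$ to the contour integral tends to $-2\pi\,\mathrm{ord}_z(f)\,F(z)$ (an arc of angular width $2\pi/|\overline{\Gamma_0(N)}_z|$ at an elliptic fixed point, just as in the valence formula), so these circles together produce $-2\pi\,\mathcal{D}_F(\div(f))$. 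Letting also $T \to \infty$, the left-hand side tends to $\langle \Delta_0 F, w\rangle^{\mathrm{reg}}$ and the first term on the right tends to the regularized volume integral of $F$.

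Next I would turn the volume term into a boundary term: using $F = \xi_2 g$, a direct computation gives the exact form $\xi_2 g\,\dd\mu = -\dd(\overline{g}\,\dd\overline{\tau})$, so by Stokes' theorem $\int_{\mathcal{F}_{T,\varepsilon}} F\,\dd\mu = -\oint_{\partial\mathcal{F}_{T,\varepsilon}} \overline{g}\,\dd\overline{\tau}$, the deleted-disk boundaries contributing nothing in the limit since $\overline{g}\,\dd\overline{\tau}$ is smooth on $\bbH$. At this stage every surviving quantity is a contour integral over $\partial\mathcal{F}_{T,\varepsilon}$, i.e.\ over the horocyclic arcs at the cusps together with the $\Gamma_0(N)$-identified side-edges of $\mathcal{F}$. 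Because $F$ and $w$ descend to $Y_0(N)$ — which forces the $1$-form $F\,\partial_n w - w\,\partial_n F$ (conformally natural in dimension $2$) to be $\Gamma_0(N)$-invariant — and because $\overline{g}\,\dd\overline{\tau}$ is itself $\Gamma_0(N)$-invariant as a $1$-form, the two copies of each identified side-edge cancel, and only the cusp arcs remain. Collecting these and dividing by $2\pi$ yields $\mathcal{D}_F(\div(f)) + \tfrac{1}{2\pi}\langle \Delta_0 F, w\rangle^{\mathrm{reg}} = \sum_{\rho} C_\rho(f,g)$, with each $C_\rho(f,g)$ obtained from the local (Fourier) expansions of $f$ and $g$ at the cusp $\rho$.

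The main obstacle will be the cusp analysis together with the meaning of the regularized pairing. Since $g$, and hence $F = \xi_2 g$, is only a \emph{smooth} modular form and need not decay at the cusps, while $w$ grows linearly in the cusp parameter, the naive integral $\langle \Delta_0 F, w\rangle$ diverges in general; it must be read in the Zagier-regularized sense, and one must verify that the divergent parts (powers of $T$ and $\log T$) of the horocyclic boundary integrals as $T\to\infty$ cancel precisely against the counterterms implicit in $\langle\,\cdot\,,\,\cdot\,\rangle^{\mathrm{reg}}$, leaving the finite remainders $C_\rho(f,g)$. This is exactly where the hypothesis ``if the regularized inner product converges'' is used, and where essentially all of the bookkeeping lies — matching constant terms, $\log T$ terms, and polynomial-in-$T$ terms in the expansions of $f$, $g$, $\xi_2 g$, and $Df/f$ at each cusp. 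Once this is done, the remaining steps are a routine adaptation of the contour-integration proofs of \cref{thm:valence,thm:Rohrlich,thm:BKO}.
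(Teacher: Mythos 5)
Your proposal is correct and follows essentially the same route as the paper: Stokes'/Green's theorem on a fundamental domain truncated near the cusps and with $\varepsilon$-balls removed around the divisor of $f$, the small circles producing $\mathcal{D}_F(\div(f))$, the identified side-edges cancelling by $\Gamma_0(N)$-invariance, and the cusp arcs (after regularization) producing the constants $C_\rho(f,g)$. The only difference is bookkeeping: the paper applies Stokes once to the single $1$-form $\omega = 2\pi\bigl(\tfrac{Df}{f}-\tfrac{k}{4\pi v}\bigr)F\,\dd\tau + \bigl(-\log(v^{k/2}|f|)\,\overline{\Delta_2 g} - \tfrac{k}{2}\overline{g}\bigr)\dd\overline{\tau}$, whose exterior derivative is $\Delta_0 F\cdot\log(v^{k/2}|f|)\,\tfrac{\dd u\wedge \dd v}{v^2}$, whereas you split the same computation into Green's identity (using $\Delta_0\log(v^{k/2}|f|)=k/2$ off the divisor) plus a second application of Stokes to $\overline{g}\,\dd\overline{\tau}$ — an equivalent argument, with the remaining cusp/regularization analysis left unverified in both your sketch and the paper's.
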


\begin{proof}
	We leave the details to the original article~\cite{JKKM2025} and focus here on outlining the key idea. Let $\mathcal{D}_k$ be the differential operator defined by
	\[
		\mathcal{D}_k f(\tau) \coloneqq \frac{Df(\tau)}{f(\tau)} - \frac{k}{4\pi v}.
	\]
	A direct calculation shows that the $1$-form
	\[
		\omega = 2\pi \mathcal{D}_k f(\tau) \cdot F(\tau) \dd \tau + \left(-\log(v^{k/2} |f(\tau)|) \cdot \overline{\Delta_2 g(\tau)} - \frac{k}{2} \overline{g(\tau)} \right) \dd \overline{\tau}
	\]
	satisfies
	\[
		\dd \omega = \Delta_0 F(\tau) \cdot \log(v^{k/2} |f(\tau)|) \frac{\dd u \wedge \dd v}{v^2}.
	\]
	Applying Stokes' theorem then yields
	\[
		\langle \Delta_0 F, \log(v^{k/2} |f|)\rangle^\mathrm{reg} = \lim_{\varepsilon \to 0} \int_{\mathcal{F}(\Gamma_0(N))^\mathrm{reg}} \dd \omega = \lim_{\varepsilon \to 0} \int_{\partial \mathcal{F}(\Gamma_0(N))^\mathrm{reg}} \omega,
	\]
	where $\mathcal{F}(\Gamma_0(N))^{\mathrm{reg}}$ denotes a truncated fundamental domain for $\Gamma_0(N)$, obtained by removing $\varepsilon$-balls around the zeros and poles of $f$. The poles of $Df/f$ (arising from $\mathcal{D}_kf(\tau)$) in $\bbH$ contribute to $\mathcal{D}_F(\div(f))$, and the integrals around cusps yield the terms $C_\rho(f,g)$. (However, the limit as $\varepsilon \to 0$ that defines $C_\rho(f,g)$ does not necessarily exist in general).
\end{proof}

\subsection{Corollaries}

By applying \cref{thm:JKKM2025} to weight 2 polyharmonic Maass forms as the function $g(\tau)$, we can extend various results, including \cref{thm:BKO}, \cref{thm:valence}, \cref{thm:Rohrlich}, to the setting of $\Gamma_0(N)$, simultaneously. For instance, a partial extension of \cref{thm:BKO} to general level $N$ was given in~\cite{BKLOR2018}, but our theorem provides a full generalization of that result as shown in~\cite[Section 4.4]{JKKM2025}.

 We briefly review the construction of polyharmonic Maass forms using the Maass--Poincar\'{e} series, following the approach in~\cite{Matsusaka2020}. For $\Re(s) > 1$ and an integer $m$, the \emph{Maass--Poincar\'{e} series} $P_{N,k,m}(\tau,s)$ of weight $k \in 2\Z$ is defined by
\[
	P_{N,k,m}(\tau, s) \coloneqq \sum_{\gamma \in \Gamma_0(N)_\infty \backslash \Gamma_0(N)} \psi_{k,m}(v, s) \mathbf{e}(mu)|_k \gamma,
\]
where $\psi_{k,m}(\cdot, s) : \R_{>0} \to \C$ is defined by
\[
	\psi_{k,m}(v, s) = \begin{cases}
		\Gamma(2s)^{-1} (4\pi|m|v)^{-k/2} M_{\sgn(m)k/2, s-1/2}(4\pi |m|v) &\text{if } m \neq 0,\\
		v^{s-k/2} &\text{if } m=0,
	\end{cases}
\]
and $M_{\mu, \nu}(y)$ denotes the $M$-Whittaker function. It is known that $P_{N,k,m}(\tau, s)$ admits the meromorphic continuation to $s=1$, and its Laurent expansion around $s=1$,
\[
	P_{N,k,m}(\tau, s) = \sum_{r=-1}^\infty G_{N,k,m,r}(\tau) (s-1)^r
\]
yields a sequence of smooth modular forms $(G_{N,k,m,r}(\tau))_{r \ge -1}$ of weight $k$ on $\Gamma_0(N)$. As shown in~\cite[(3.1)]{Matsusaka2020}, we have
\begin{align}\label{eq:xi-P}
	\xi_k P_{N,k,m}(\tau, s) = \begin{cases}
		(4\pi|m|)^{1-k} (\overline{s}-k/2) P_{N,2-k,-m}(\tau, \overline{s}) &\text{if } m \neq 0,\\
		(\overline{s}-k/2) P_{N,2-k,0}(\tau, \overline{s}) &\text{if } m = 0.
	\end{cases}
\end{align}
By comparing the coefficients at $s=1$ on both sides, we obtain the relation satisfied by $G_{N,k,m,r}(\tau)$ under the action of $\xi_k$.

\begin{example}
	When $N = 1$ and $m=0$, the classical Kronecker limit formula shows that
	\[
		P_{1,0,0}(\tau, s) = \frac{3}{\pi} \frac{1}{s-1} - \frac{1}{2\pi} \log(v^6 |\Delta(\tau)|) + C + O(s-1),
	\]
	where $C$ is defined as in \cref{thm:Rohrlich}. From \eqref{eq:xi-P}, we find that
	\[
		G_{1,2,0,0}(\tau) = E_2^*(\tau) \coloneqq 1 - 24 \sum_{n=1}^\infty \sigma_1(n) q^n - \frac{3}{\pi v}
	\]
	satisfies $F(\tau) = \xi_2 E_2^*(\tau) = 3/\pi = G_{1,0,0,-1}(\tau)$. Therefore, we can apply \cref{thm:JKKM2025} to derive the valence formula (\cref{thm:valence}). Indeed, in this case, since $\Delta_0F(\tau) = 0$, we obtain
	\[
		\frac{3}{\pi} \sum_{[z] \in \SL_2(\Z) \backslash \bbH} \frac{\mathrm{ord}_z(f)}{|\PSL_2(\Z)_z|} = \mathcal{D}_F(\div(f)) = C_{i\infty}(f, E_2^*),
	\]
	and the constant can be explicitly calculated as 
	\[
		C_{i\infty}(f, E_2^*) = -\frac{3}{\pi} \mathrm{ord}_{i\infty}(f) + \frac{k}{4\pi}.
	\]
\end{example}

Similarly, by applying \cref{thm:JKKM2025} to $g(\tau) = G_{N,2,0,0}(\tau)$, $G_{N,2,0,1}(\tau)$, and $G_{N,2,m,1}(\tau)$ with $m>0$, we obtain higher-level analogues of \cref{thm:valence}, \cref{thm:Rohrlich}, and \cref{thm:BKO}, respectively. For more details and further generalizations, we refer the reader to our original article~\cite{JKKM2025}.

\bibliographystyle{amsalpha}
\bibliography{References} 

@article {BruinierFunkeImamoglu2015,
    AUTHOR = {Bruinier, Jan H. and Funke, Jens and Imamo\={g}lu, \"{O}zlem},
     TITLE = {Regularized theta liftings and periods of modular functions},
   JOURNAL = {J. Reine Angew. Math.},
  FJOURNAL = {Journal f\"{u}r die Reine und Angewandte Mathematik. [Crelle's
              Journal]},
    VOLUME = {703},
      YEAR = {2015},
     PAGES = {43--93}
}

@article {GriffinJenkinsMolnar2022,
    AUTHOR = {Griffin, Michael and Jenkins, Paul and Molnar, Grant},
     TITLE = {The arithmetic of modular grids},
   JOURNAL = {Mathematika},
  FJOURNAL = {Mathematika. A Journal of Pure and Applied Mathematics},
    VOLUME = {68},
      YEAR = {2022},
    NUMBER = {4},
     PAGES = {1080--1119}
}

@article {Kaneko2009,
    AUTHOR = {Kaneko, Masanobu},
     TITLE = {Observations on the `values' of the elliptic modular function
              {$j(\tau)$} at real quadratics},
   JOURNAL = {Kyushu J. Math.},
  FJOURNAL = {Kyushu Journal of Mathematics},
    VOLUME = {63},
      YEAR = {2009},
    NUMBER = {2},
     PAGES = {353--364}
}

@article {OnoSaad2022,
    AUTHOR = {Ono, Ken and Saad, Hasan},
     TITLE = {Some {E}ichler-{S}elberg trace formulas},
   JOURNAL = {Hardy-Ramanujan J.},
  FJOURNAL = {Hardy-Ramanujan Journal},
    VOLUME = {45},
      YEAR = {2022},
     PAGES = {94--107}
}

@article {AndersenDuke2022,
    AUTHOR = {Andersen, Nickolas and Duke, William},
     TITLE = {Asymptotic distribution of traces of singular moduli},
   JOURNAL = {Discrete Anal.},
  FJOURNAL = {Discrete Analysis},
      YEAR = {2022},
     PAGES = {Paper No. 4, 14}
}

@article {Duke2006,
    AUTHOR = {Duke, W.},
     TITLE = {Modular functions and the uniform distribution of {CM} points},
   JOURNAL = {Math. Ann.},
  FJOURNAL = {Mathematische Annalen},
    VOLUME = {334},
      YEAR = {2006},
    NUMBER = {2},
     PAGES = {241--252}
}

@misc{JKKM2025-pre,
      title={Hecke equivariance of the divisor map}, 
      author={Daeyeol Jeon and Soon-Yi Kang and Chang Heon Kim and Toshiki Matsusaka},
      year={2025},
      note={\eprint{arXiv:2505.01702}},
      eprint={2505.01702},
      archivePrefix={arXiv},
      primaryClass={math.NT},
      url={https://arxiv.org/abs/2505.01702}, 
}

@article {JKKM2025,
    AUTHOR = {Jeon, Daeyeol and Kang, Soon-Yi and Kim, Chang Heon and
              Matsusaka, Toshiki},
     TITLE = {A unified approach to {R}ohrlich-type divisor sums},
   JOURNAL = {Res. Math. Sci.},
  FJOURNAL = {Research in the Mathematical Sciences},
    VOLUME = {12},
      YEAR = {2025},
    NUMBER = {2},
     PAGES = {Paper No. 29, 23}
}

@article {MurtySampath2016,
    AUTHOR = {Ram Murty, M. and Sampath, Kannappan},
     TITLE = {On the asymptotic formula for the {F}ourier coefficients of
              the {$j$}-function},
   JOURNAL = {Kyushu J. Math.},
  FJOURNAL = {Kyushu Journal of Mathematics},
    VOLUME = {70},
      YEAR = {2016},
    NUMBER = {1},
     PAGES = {83--91}
}

@article {BaierKohler2003,
    AUTHOR = {Baier, Harald and K\"{o}hler, G\"{u}nter},
     TITLE = {How to compute the coefficients of the elliptic modular
              function {$j(z)$}},
   JOURNAL = {Experiment. Math.},
  FJOURNAL = {Experimental Mathematics},
    VOLUME = {12},
      YEAR = {2003},
    NUMBER = {1},
     PAGES = {115--121}
}

@article {Ono2011,
    AUTHOR = {Ono, Ken},
     TITLE = {Congruences for the {A}ndrews spt function},
   JOURNAL = {Proc. Natl. Acad. Sci. USA},
  FJOURNAL = {Proceedings of the National Academy of Sciences of the United
              States of America},
    VOLUME = {108},
      YEAR = {2011},
    NUMBER = {2},
     PAGES = {473--476}
}

@book {Dickson1966,
    AUTHOR = {Dickson, Leonard Eugene},
     TITLE = {History of the theory of numbers. {V}ol. {III}: {Q}uadratic
              and higher forms},
      NOTE = {With a chapter on the class number by G. H. Cresse},
 PUBLISHER = {Chelsea Publishing Co., New York},
      YEAR = {1966},
     PAGES = {v+313}
}

@article {Mertens2014,
    AUTHOR = {Mertens, Michael H.},
     TITLE = {Mock modular forms and class number relations},
   JOURNAL = {Res. Math. Sci.},
  FJOURNAL = {Research in the Mathematical Sciences},
    VOLUME = {1},
      YEAR = {2014},
     PAGES = {Art. 6, 16}
}

@article {AlfesSchwagenscheidt2018,
    AUTHOR = {Alfes-Neumann, Claudia and Schwagenscheidt, Markus},
     TITLE = {On a theta lift related to the {S}hintani lift},
   JOURNAL = {Adv. Math.},
  FJOURNAL = {Advances in Mathematics},
    VOLUME = {328},
      YEAR = {2018},
     PAGES = {858--889}
}

@article {AlfesSchwagenscheidt2021,
    AUTHOR = {Alfes-Neumann, Claudia and Schwagenscheidt, Markus},
     TITLE = {Shintani theta lifts of harmonic {M}aass forms},
   JOURNAL = {Trans. Amer. Math. Soc.},
  FJOURNAL = {Transactions of the American Mathematical Society},
    VOLUME = {374},
      YEAR = {2021},
    NUMBER = {4},
     PAGES = {2297--2339}
}

@article {AlfesEhlen2013,
    AUTHOR = {Alfes, Claudia and Ehlen, Stephan},
     TITLE = {Twisted traces of {CM} values of weak {M}aass forms},
   JOURNAL = {J. Number Theory},
  FJOURNAL = {Journal of Number Theory},
    VOLUME = {133},
      YEAR = {2013},
    NUMBER = {6},
     PAGES = {1827--1845}
}

@article {Kim2008,
    AUTHOR = {Kim, Chang Heon},
     TITLE = {Recursions for traces of singular moduli},
   JOURNAL = {J. Chungcheong Math. Soc.},
    VOLUME = {21},
      YEAR = {2008},
    NUMBER = {2},
     PAGES = {183--188}
}

@article {Kim2007,
    AUTHOR = {Kim, Chang Heon},
     TITLE = {Generating function of traces of singular moduli},
   JOURNAL = {J. Chungcheong Math. Soc.},
    VOLUME = {20},
      YEAR = {2007},
    NUMBER = {4},
     PAGES = {375--386}
}

@article {Kim2004,
    AUTHOR = {Kim, Chang Heon},
     TITLE = {Borcherds products associated with certain {T}hompson series},
   JOURNAL = {Compos. Math.},
  FJOURNAL = {Compositio Mathematica},
    VOLUME = {140},
      YEAR = {2004},
    NUMBER = {3},
     PAGES = {541--551}
}

@article {Kim2006,
    AUTHOR = {Kim, Chang Heon},
     TITLE = {Traces of singular values and {B}orcherds products},
   JOURNAL = {Bull. London Math. Soc.},
  FJOURNAL = {The Bulletin of the London Mathematical Society},
    VOLUME = {38},
      YEAR = {2006},
    NUMBER = {5},
     PAGES = {730--740}
}

@article {Andrews2008,
    AUTHOR = {Andrews, George E.},
     TITLE = {The number of smallest parts in the partitions of {$n$}},
   JOURNAL = {J. Reine Angew. Math.},
  FJOURNAL = {Journal f\"{u}r die Reine und Angewandte Mathematik. [Crelle's
              Journal]},
    VOLUME = {624},
      YEAR = {2008},
     PAGES = {133--142}
}

@article {ImamogluRaumRichter2014,
    AUTHOR = {Imamo\u{g}lu, \"{O}zlem and Raum, Martin and Richter, Olav K.},
     TITLE = {Holomorphic projections and {R}amanujan's mock theta
              functions},
   JOURNAL = {Proc. Natl. Acad. Sci. USA},
  FJOURNAL = {Proceedings of the National Academy of Sciences of the United
              States of America},
    VOLUME = {111},
      YEAR = {2014},
    NUMBER = {11},
     PAGES = {3961--3967}
}

@phdthesis{Matsusaka2019-phd,
    title={Fourier coefficients of polyharmonic weak {M}aass forms},
    author={Matsusaka, Toshiki},
    year={2019},
    school={Kyushu University}
}

@article {LinMcSpiritVishnu2020,
    AUTHOR = {Lin, Alice and McSpirit, Eleanor and Vishnu, Adit},
     TITLE = {Algebraic relations between partition functions and the
              {$j$}-function},
   JOURNAL = {Res. Number Theory},
  FJOURNAL = {Research in Number Theory},
    VOLUME = {6},
      YEAR = {2020},
    NUMBER = {1},
     PAGES = {Paper No. 2, 15}
}

@article {Ohta2009,
    AUTHOR = {Ohta, Kaori},
     TITLE = {Formulas for the {F}ourier coefficients of some genus zero
              modular functions},
   JOURNAL = {Kyushu J. Math.},
  FJOURNAL = {Kyushu Journal of Mathematics},
    VOLUME = {63},
      YEAR = {2009},
    NUMBER = {1},
     PAGES = {1--15}
}

@article {Matsusaka2017,
    AUTHOR = {Matsusaka, Toshiki},
     TITLE = {The {F}ourier coefficients of the {M}c{K}ay-{T}hompson series
              and the traces of {CM} values},
   JOURNAL = {Res. Number Theory},
  FJOURNAL = {Research in Number Theory},
    VOLUME = {3},
      YEAR = {2017},
     PAGES = {Paper No. 23, 16}
}

@article {MatsusakaOsanai2017,
    AUTHOR = {Matsusaka, Toshiki and Osanai, Ryotaro},
     TITLE = {Arithmetic formulas for the {F}ourier coefficients of
              {H}auptmoduln of level 2, 3, and 5},
   JOURNAL = {Proc. Amer. Math. Soc.},
  FJOURNAL = {Proceedings of the American Mathematical Society},
    VOLUME = {145},
      YEAR = {2017},
    NUMBER = {4},
     PAGES = {1383--1392}
}

@article {Selberg1956,
    AUTHOR = {Selberg, Atle},
     TITLE = {Harmonic analysis and discontinuous groups in weakly symmetric
              {R}iemannian spaces with applications to {D}irichlet series},
   JOURNAL = {J. Indian Math. Soc. (N.S.)},
  FJOURNAL = {The Journal of the Indian Mathematical Society. New Series},
    VOLUME = {20},
      YEAR = {1956},
     PAGES = {47--87}
}

@article {Eichler1955,
    AUTHOR = {Eichler, Martin},
     TITLE = {On the class of imaginary quadratic fields and the sums of
              divisors of natural numbers},
   JOURNAL = {J. Indian Math. Soc. (N.S.)},
  FJOURNAL = {The Journal of the Indian Mathematical Society. New Series},
    VOLUME = {19},
      YEAR = {1955},
     PAGES = {153--180}
}

@article {MertensOno2016,
    AUTHOR = {Mertens, Michael H. and Ono, Ken},
     TITLE = {Special values of shifted convolution {D}irichlet series},
   JOURNAL = {Mathematika},
  FJOURNAL = {Mathematika. A Journal of Pure and Applied Mathematics},
    VOLUME = {62},
      YEAR = {2016},
    NUMBER = {1},
     PAGES = {47--66},
}

@article {HoffsteinHulse2016,
    AUTHOR = {Hoffstein, Jeff and Hulse, Thomas A.},
     TITLE = {Multiple {D}irichlet series and shifted convolutions},
      NOTE = {With an appendix by Andre Reznikov},
   JOURNAL = {J. Number Theory},
  FJOURNAL = {Journal of Number Theory},
    VOLUME = {161},
      YEAR = {2016},
     PAGES = {457--533}
}

@incollection {Selberg1965,
    AUTHOR = {Selberg, Atle},
     TITLE = {On the estimation of {F}ourier coefficients of modular forms},
 BOOKTITLE = {Proc. {S}ympos. {P}ure {M}ath., {V}ol. {VIII}},
     PAGES = {1--15},
 PUBLISHER = {Amer. Math. Soc., Providence, RI},
      YEAR = {1965}
}

@misc{Zagier1991,
  author = {Zagier, Don},
  title = {Modular forms of one variable},
  Note = {Notes based on a course given in Utrecht, Spring 1991, available in \url{https://people.mpim-bonn.mpg.de/zagier/files/tex/UtrechtLectures/UtBook.pdf}}
}

@article {Zagier1975,
    AUTHOR = {Zagier, Don},
     TITLE = {Nombres de classes et formes modulaires de poids {$3/2$}},
   JOURNAL = {C. R. Acad. Sci. Paris S\'{e}r. A-B},
  FJOURNAL = {Comptes Rendus Hebdomadaires des S\'{e}ances de l'Acad\'{e}mie des
              Sciences. S\'{e}ries A et B},
    VOLUME = {281},
      YEAR = {1975},
    NUMBER = {21},
     PAGES = {Ai, A883--A886}
}

@book {EichlerZagier1985,
    AUTHOR = {Eichler, Martin and Zagier, Don},
     TITLE = {The theory of {J}acobi forms},
    SERIES = {Progress in Mathematics},
    VOLUME = {55},
 PUBLISHER = {Birkh\"{a}user Boston, Inc., Boston, MA},
      YEAR = {1985},
     PAGES = {v+148},
}

@article {Borcherds1995,
    AUTHOR = {Borcherds, Richard E.},
     TITLE = {Automorphic forms on ${O}_{s+2,2}(\mathbb{R})$ and infinite
              products},
   JOURNAL = {Invent. Math.},
  FJOURNAL = {Inventiones Mathematicae},
    VOLUME = {120},
      YEAR = {1995},
    NUMBER = {1},
     PAGES = {161--213}
}

@article {BruinierFunke2006,
    AUTHOR = {Bruinier, Jan and Funke, Jens},
     TITLE = {Traces of {CM} values of modular functions},
   JOURNAL = {J. Reine Angew. Math.},
  FJOURNAL = {Journal f\"{u}r die Reine und Angewandte Mathematik. [Crelle's
              Journal]},
    VOLUME = {594},
      YEAR = {2006},
     PAGES = {1--33}
}

@article {Hurwitz1885,
    AUTHOR = {Hurwitz, Adolf},
     TITLE = {\"{U}ber {R}elationen zwischen {C}lassenzahlen bin\"{a}rer quadratischer
{F}ormen von negativer {D}eterminante},
   JOURNAL = {Math. Ann.},
    VOLUME = {25},
      YEAR = {1885},
     PAGES = {157--196}
}

@misc{Kaneko2001,
      title={Fourier coefficients of the elliptic modular function $j(\tau)$, (in {J}apanese)}, 
      author={Kaneko, Masanobu},
      year={2001},
      note={Rokko Lectures in Mathematics 10, \url{https://www2.math.kyushu-u.ac.jp/~mkaneko/papers/jcoeff_Kobe.pdf}}
}

@article {DengMatsusakaOno2024,
    AUTHOR = {Deng, Yuqi and Matsusaka, Toshiki and Ono, Ken},
     TITLE = {Eichler-{S}elberg relations for singular moduli},
   JOURNAL = {Forum Math. Sigma},
  FJOURNAL = {Forum of Mathematics. Sigma},
    VOLUME = {12},
      YEAR = {2024},
     PAGES = {Paper No. e117, 24}
}

@article {Kaneko1996,
    AUTHOR = {Kaneko, Masanobu},
     TITLE = {The {F}ourier coefficients and the singular moduli of the
              elliptic modular function {$j(\tau)$}},
   JOURNAL = {Mem. Fac. Engrg. Design Kyoto Inst. Tech. Ser. Sci. Tech.},
  FJOURNAL = {Memoirs of the Faculty of Engineering and Design, Kyoto
              Institute of Technology. Series of Science and Technology},
    VOLUME = {44},
      YEAR = {1996},
     PAGES = {1--5}
}

@article {BruinierKohnenOno2004,
    AUTHOR = {Bruinier, Jan and Kohnen, Winfried and Ono, Ken},
     TITLE = {The arithmetic of the values of modular functions and the
              divisors of modular forms},
   JOURNAL = {Compos. Math.},
  FJOURNAL = {Compositio Mathematica},
    VOLUME = {140},
      YEAR = {2004},
    NUMBER = {3},
     PAGES = {552--566}
}

@article {Rohrlich1984,
    AUTHOR = {Rohrlich, David E.},
     TITLE = {A modular version of {J}ensen's formula},
   JOURNAL = {Math. Proc. Cambridge Philos. Soc.},
  FJOURNAL = {Mathematical Proceedings of the Cambridge Philosophical
              Society},
    VOLUME = {95},
      YEAR = {1984},
    NUMBER = {1},
     PAGES = {15--20}
}

@article {Matsusaka2020,
    AUTHOR = {Matsusaka, Toshiki},
     TITLE = {Polyharmonic weak {M}aass forms of higher depth for {${\rm
              SL}_2(\Bbb Z)$}},
   JOURNAL = {Ramanujan J.},
  FJOURNAL = {Ramanujan Journal. An International Journal Devoted to the
              Areas of Mathematics Influenced by Ramanujan},
    VOLUME = {51},
      YEAR = {2020},
    NUMBER = {1},
     PAGES = {19--42}
}

@article {Matsusaka2019,
    AUTHOR = {Matsusaka, Toshiki},
     TITLE = {Traces of {CM} values and cycle integrals of polyharmonic
              {M}aass forms},
   JOURNAL = {Res. Number Theory},
  FJOURNAL = {Research in Number Theory},
    VOLUME = {5},
      YEAR = {2019},
    NUMBER = {1},
     PAGES = {Paper No. 8, 25}
}

@incollection {Zagier2002,
    AUTHOR = {Zagier, Don},
     TITLE = {Traces of singular moduli},
 BOOKTITLE = {Motives, polylogarithms and {H}odge theory, {P}art {I}
              ({I}rvine, {CA}, 1998)},
    SERIES = {Int. Press Lect. Ser.},
    VOLUME = {3},
     PAGES = {211--244},
 PUBLISHER = {Int. Press, Somerville, MA},
      YEAR = {2002}
}

@article {DukeImamogluToth2011,
    AUTHOR = {Duke, W. and Imamo\={g}lu, \"{O}. and T\'{o}th, \'{A}.},
     TITLE = {Cycle integrals of the {$j$}-function and mock modular forms},
   JOURNAL = {Ann. of Math. (2)},
  FJOURNAL = {Annals of Mathematics. Second Series},
    VOLUME = {173},
      YEAR = {2011},
    NUMBER = {2},
     PAGES = {947--981}
}

@book {Bruinier2002,
    AUTHOR = {Bruinier, Jan},
     TITLE = {Borcherds products on ${O}(2, l)$ and {C}hern classes of
              {H}eegner divisors},
    SERIES = {Lecture Notes in Mathematics},
    VOLUME = {1780},
 PUBLISHER = {Springer-Verlag, Berlin},
      YEAR = {2002},
     PAGES = {viii+152}
}

@book {BFOR2017,
    AUTHOR = {Bringmann, Kathrin and Folsom, Amanda and Ono, Ken and Rolen,
              Larry},
     TITLE = {Harmonic {M}aass forms and mock modular forms: theory and
              applications},
    SERIES = {American Mathematical Society Colloquium Publications},
    VOLUME = {64},
 PUBLISHER = {American Mathematical Society, Providence, RI},
      YEAR = {2017},
     PAGES = {xv+391}
}

@article {BKLOR2018,
    AUTHOR = {Bringmann, Kathrin and Kane, Ben and L\"{o}brich, Steffen and Ono,
              Ken and Rolen, Larry},
     TITLE = {On divisors of modular forms},
   JOURNAL = {Adv. Math.},
  FJOURNAL = {Advances in Mathematics},
    VOLUME = {329},
      YEAR = {2018},
     PAGES = {541--554}
}

\end{document}